\title[$\ell$-adic hypergeometric function]
{The $\ell$-adic hypergeometric function and associators}
\author{Hidekazu Furusho}
\address{Graduate School of Mathematics, Nagoya University, Chikusa-ku, Furo-cho, Nagoya, 464-8602,  Japan}
\email{furusho@math.nagoya-u.ac.jp}
\keywords{
Associators, KZ equation, Hypergeometric functions.}
\subjclass[2020]{Primary~11F80, Secondary~11M32, 33C05}
\date{September 1, 2022}
\newtheorem{thm}{Theorem}[section]
\newtheorem{lem}[thm]{Lemma}
\newtheorem{prop}[thm]{Proposition}  
\theoremstyle{remark}
\theoremstyle{definition}
\newtheorem{defn}[thm]{Definition}
\newtheorem{rem}[thm]{Remark}
\newtheorem{nota}[thm]{Notation}
\numberwithin{equation}{section}
\numberwithin{figure}{section}
\newcommand{\dch}{\mathrm{dch}}
\newcommand{\GL}{\mathrm{GL}}
\newcommand{\SL}{\mathrm{SL}}
\newcommand{\ev}{\mathrm{ev}}
\newcommand{\sw}{\mathrm{sw}}
\newcommand{\GT}{\mathrm{GT}}
\newcommand{\Q}{\mathbb{Q}}
\newcommand{\C}{\mathbb{C}}
\newcommand{\R}{\mathbb{R}}
\newcommand{\Z}{\mathbb{Z}}
\newcommand{\N}{\mathbb{N}}
\newcommand{\K}{\mathbb{K}}
\newcommand{\V}{\mathcal{V}}
\newcommand{\X}{\mathcal{X}}
\newcommand{\PP}{\mathcal{R}}
\newcommand{\aaa}{\mathsf a}
\newcommand{\bbb}{\mathsf b}
\newcommand{\ccc}{\mathsf c}
\newcommand{\ppp}{\mathsf p}
\newcommand{\qqq}{\mathsf q}
\newcommand{\wt}{\mathrm{wt}}
\newcommand{\Mat}{\mathrm{Mat}}
\newcommand{\Ad}{\mathrm{Ad}}
\newcommand{\comp}{\mathrm{comp}}
\newcommand*{\HG}[3]{{}_{2}F_{1}{\left({{#1}\atop{#2}}\middle|{#3}\right)}}
\newcommand*{\HGdag}[3]{{}_{2}F_{1}^\dagger{\left({{#1}\atop{#2}}\middle|{#3}\right)}}
\newcommand*{\HGddag}[3]{{}_{2}F_{1}^\ddagger{\left({{#1}\atop{#2}}\middle|{#3}\right)}}
\newcommand*{\dHG}[3]{{}_{2}F'_{1}{\left({{#1}\atop{#2}}\middle|{#3}\right)}}
\newcommand*{\GGamma}[3]{\Gamma_{#1}{\left({{#2}\atop{#3}}\right)}}
\newcommand{\Add}[1]{\textcolor{black}{#1}}
\begin{document}
\bibliographystyle{amsalpha+}
\maketitle

\begin{abstract}
We introduce an $\ell$-adic analogue of Gauss's hypergeometric function
arising from the Galois action on
the fundamental torsor of the projective line minus  three points.
Its definition is motivated by a relation between
the KZ-equation and the hypergeometric differential equation in the complex case.
We show two basic properties,
analogues of Gauss's hypergeometric theorem and
of Euler's transformation formula
for our $\ell$-adic function.
We prove them by
detecting a connection of
a certain two-by-two matrix  specialization of  even unitary associators 
with
the associated gamma function, which extends the result of Ohno and Zagier.
\end{abstract}

\tableofcontents
\setcounter{section}{-1}
\section{Introduction}\label{introduction}
It is said that the hypergeometric function first appeared in a book by Wallis (1655).
Since then, the  
hypergeometric function has attracted and widespread attention in
various areas of mathematics.
In this paper we introduce a new variant,
the {\it $\ell$-adic hypergeometric function} ($\ell$: an odd prime).
It is an $\ell$-adic function parametrized by the absolute Galois group,
which could be regarded as an \lq $\ell$-adic 
Galois avatar' of the 
hypergeometric function.
This is not the finite analogue of  hypergeometric functions
considered the literature
(see \cite{FLRST, Greene, Katz, Otsubo}, etc).
It also differs from 
Dwork's $p$-adic hypergeometric function (\cite{Dw}),
which is rather  a \lq crystalline avatar'.
However, one might expect any intimate relationship with these functions
by discussing any possibly common motives.

Our construction of the $\ell$-adic hypergeometric function
is motivated by the following:
\begin{itemize}
\item
In the $\ell$-adic \'{e}tale setting,
Wojtkowiak (\cite{W}) and Nakamura-Wojtkowiak (\cite{NW})
introduced and explored
the {\it $\ell$-adic 
polylogarithm}, 
which is associated with
the $\ell$-adic Galois representation on the fundamental torsor of the projective line minus  three points.
It is an $\ell$-adic function parametrized by the absolute Galois group
and topological paths.
It is considered as an $\ell$-adic Galois avatar of
Coleman's $p$-adic polylogarithm (\cite{Col}).
\item
In the complex case, 
intimate relations between the KZ-equation and
the hypergeometric differential equation have been discussed in the
 literature; see \cite{SV}, \cite{V}, etc.
In particular, Oi presented a clear formulation
in \cite{O},
where 
by reconstructing the hypergeometric function from the fundamental solution
\eqref{eq:G01}
of the KZ-equation 
(cf. \eqref{eq:HG=11}),
he  deduced various relations among multiple polylogarithms 
appearing as its coefficients.
\end{itemize}
Our strategy is to (i) consider
the $\ell$-adic Galois cocycle  $f_\sigma^z$
(see \eqref{eq: Galois 1-cocycle} below) 
associated with the same $\ell$-adic Galois representation,
(ii) regard  its image
$G_{\vec{01}}^{\varphi}(e_0,e_1)(\sigma)(z)$
under the fake comparison isomorphism (Definition \ref{defn: fake comparison})
constructed by associators
as an $\ell$-adic analogue of the fundamental solution
and  (iii) then extract  the $\ell$-adic hypergeometric function 
as the $(1,1)$ entry of the $2\times 2$-matrix
in the same way as
\eqref{eq:HG=11}:


(i). Let $\bar\Q$ be the algebraic closure of the rational number field $\Q$
and $G_\Q:=\mathrm{Gal}(\bar \Q/\Q)$ be the absolute Galois group.
We fix an embedding $\bar \Q\hookrightarrow \C$.
We consider the algebraic curve  $\X:={\mathbb P}^1\setminus\{0,1,\infty\}$ over $\Q$,
the projective line minus the three points.
The topological fundamental group  $\pi_1^\mathrm{top}(\X(\C),\vec{01})$
of its associated topological space $\X(\C)$ with the tangential basepoint
$\vec{01}$ (cf. \cite{De}) is identified with the free group $F_2$
with the standard generators $x_0$, $x_1$, $x_\infty$ corresponding to the  loops around $0$, $1$ and $\infty$ such that $x_0x_1x_\infty=1$.

Let $\ell$ be an \Add{odd} prime. Let $z$ be a rational (or tangential base) point of $\X$.
We denote by $\hat\pi_1^\ell(\X_{\bar \Q};\vec{01},z)$
the profinite set of pro-$\ell$ \'{e}tale paths from $\vec{01}$ to $z$
(cf. \cite{SGA1}).
By the comparison isomorphisms induced by  the fixed embedding  $\bar \Q\hookrightarrow \C$,
we identify the pro-$\ell$ \'{e}tale fundamental group
$\hat\pi_1^\ell(\X_{\bar \Q};\vec{01}):=\hat\pi_1^\ell(\X_{\bar \Q};\vec{01},\vec{01})$
with the pro-$\ell$ completion $\hat F_2^{\ell}$ of  $F_2$,
and we regard each topological path  $\gamma_z:\vec{01}\leadsto z$
(that signifies $\gamma_z\in\pi_1^\mathrm{top}(\X(\C);\vec{01},z)$)
as a pro-$\ell$ \'{e}tale path 
$\gamma_z\in\hat\pi_1^\ell(\X_{\bar \Q};\vec{01},z)$.
Since $\X$ and $z$ are defined over $\Q$,
the set $\hat\pi_1^\ell(\X_{\bar \Q};\vec{01},z)$ admits  the action of 
$G_\Q$. 
For each $\sigma\in G_\Q$, we consider the Galois 1-cocyle
\begin{equation}\label{eq: Galois 1-cocycle}
f^z_\sigma=f^{\gamma_z}_\sigma:=
\gamma_z^{-1}\sigma(\gamma_z)\in \hat F_2^\ell.
\end{equation}

(ii). We take an $\ell$-adic even unitary associator $\varphi$ (Definition \ref{defn:associator}).
It provides the fake comparison isomorphism $\comp_\varphi^{\vec{01}}$
 (Definition \ref{defn: fake comparison}).
Under the map  $\iota_\varphi$ in \eqref{eq:iota-varphi},
the restriction of $\comp_\varphi^{\vec{01}}$
by the inclusion $\hat F_2^{(\ell)}\hookrightarrow F_2(\Q_\ell)$,
our $\ell$-adic analogue of the fundamental solution is  defined to be the
noncommutative formal power series
$$
G_{\vec{01}}^{\varphi}(e_0,e_1)(\sigma)(z):=\iota_{\varphi}(f_\sigma^z)
\in \Q_\ell\langle\langle e_0,e_1 \rangle\rangle.
$$

(iii). Let $\aaa,\bbb,\ccc$ be variables.
Put $\ppp=1-\ccc$, $\qqq=\aaa+\bbb+1-\ccc=\aaa+\bbb+\ppp$ and
$$
X=
\begin{pmatrix}
0 & \bbb \\
0 & \ppp
\end{pmatrix}, \
Y=
\begin{pmatrix}
0 & 0 \\
\aaa & \qqq
\end{pmatrix}\in \Mat_2(\Q_\ell[\aaa,\bbb,\ccc-1]).
$$
Following \eqref{eq:HG=11},
we define the formal version of $\ell$-adic hypergeometric function to be
the $(1,1)$ entry of the above $G_{\vec{01}}^{\varphi}(e_0,e_1)(\sigma)(z)$
with substitution at $e_0=X$ and $e_1=-Y$: 
\begin{equation}\label{eq:HG and even associator}
\HG{\aaa,\bbb}{\ccc}{z}(\sigma)=
\HG{\aaa,\bbb}{\ccc}{\gamma_z}(\sigma)
:=[G_{\vec{01}}^{\varphi}(X,-Y)(\sigma)(z)
]_{(1,1)}
\in\Q_\ell[[\aaa,\bbb,\ccc-1]]
\end{equation}
(another formulation is given in Proposition \ref{prop: HG=Theta}).

Our  first result is on the well-definedness of the
$\ell$-adic hypergeometric function. 

\begin{thm}\label{fundamental theorem of l-adic HG}
(i).
The definition of $\HG{\aaa,\bbb}{\ccc}{z}(\sigma)$ is independent of any choice of  $\ell$-adic even unitary associator  $\varphi$.

(ii).
Suppose that $a,b,c\in\Z_\ell$ 	are with
\begin{equation}\label{eq:ABC condition}
|a|_\ell,|b|_\ell,|c-1|_\ell<1.
\end{equation}
Then, $\HG{a,b}{c}{z}(\sigma)$ converges.
\end{thm}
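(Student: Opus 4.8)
The plan is to separate the two assertions and reduce each to the formal structure of the series $G_{\vec{01}}^{\varphi}(e_0,e_1)(\sigma)(z)$ together with the cocycle identity for $f_\sigma^z$. For part (i), the key point is that two $\ell$-adic even unitary associators $\varphi$ and $\varphi'$ differ by an element of the relevant pro-unipotent/pro-$\ell$ symmetry group (the $\ell$-adic Grothendieck--Teichm\"uller-type group, or more precisely the group acting on the space of associators), so the two fake comparison isomorphisms $\comp_\varphi^{\vec{01}}$ and $\comp_{\varphi'}^{\vec{01}}$ are related by an inner-type twist by a grouplike series $h(e_0,e_1)$. First I would spell out exactly how $\iota_\varphi$ transforms under $\varphi\mapsto\varphi'$, so that $G_{\vec{01}}^{\varphi'}(e_0,e_1)(\sigma)(z) = h^{-1}\,G_{\vec{01}}^{\varphi}(e_0,e_1)(\sigma)(z)\,\sigma(h)$ or a similar conjugation-type relation. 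Then I would substitute $e_0=X$, $e_1=-Y$ and observe that $X$ and $Y$ are strictly upper/lower triangular in a way that forces $h(X,-Y)$ to be upper triangular with $1$'s on the diagonal (since $h$ has no linear term? — actually $h$ is grouplike so its constant term is $1$, and one checks the off-diagonal correction does not affect the $(1,1)$ entry). Hence the $(1,1)$ entry is unchanged. The main obstacle here is identifying the precise group through which associators differ and checking that its action, after the $2\times 2$ specialization, is triangular enough to preserve the $(1,1)$ slot; this is where I expect to lean on the structure theory recalled earlier in the paper (Definition \ref{defn:associator} and the properties of $\comp_\varphi^{\vec{01}}$).

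For part (ii), the strategy is to show termwise $\ell$-adic convergence of the substituted series. Recall that $G_{\vec{01}}^{\varphi}(e_0,e_1)(\sigma)(z)=\iota_\varphi(f_\sigma^z)$ where $f_\sigma^z\in\hat F_2^\ell$; writing $f_\sigma^z$ in terms of the generators and pushing through $\iota_\varphi$ produces a noncommutative power series in $e_0,e_1$ whose coefficients lie in $\Z_\ell$ (or in a fixed finitely generated $\Z_\ell$-submodule), because the associator $\varphi$ is chosen with $\Z_\ell$-integral (or bounded-denominator) coefficients and the $\ell$-adic \'etale path contributes only pro-$\ell$, hence $\Z_\ell$-integral, data. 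Now substitute $e_0=X$ and $e_1=-Y$: each matrix entry of a monomial $e_{i_1}\cdots e_{i_n}$ becomes a monomial in $a,b,(c-1)$ — here $\ppp=1-\ccc$ so $\ppp\mapsto 1-c$, and $\qqq=a+b+1-c$; note $|1-c|_\ell<1$ and $|a+b+1-c|_\ell<1$ under \eqref{eq:ABC condition}. Thus every length-$n$ monomial in $X,Y$ has all entries of absolute value $\le \max(|a|_\ell,|b|_\ell,|1-c|_\ell)^{\,m}$ for some $m$ growing with $n$ (one must track that each factor of $X$ or $Y$ contributes at least one factor among $a,b,1-c$, using that the $(1,1)$ entry of a product is a sum of products that necessarily cross from the first row back to the first column). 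Combined with the $\ell$-adic boundedness of the coefficients of $G_{\vec{01}}^{\varphi}$, the series of matrix $(1,1)$ entries converges in $\Z_\ell$.

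The delicate step in (ii) is the bookkeeping that every surviving monomial in the $(1,1)$ entry is divisible by a positive power of an element of $\ell\Z_\ell$: one has to rule out contributions that are pure powers of $\qqq$ on the diagonal, i.e. check that the $(1,1)$ entry never receives a term like $\ppp^{\,n}$ alone without an accompanying $a$ or $b$. Because $X$ has $(1,1)$ entry $0$, any monomial contributing to the $(1,1)$ position must use the off-diagonal entries $\bbb$ of $X$ and $\aaa$ of $Y$ to travel from column $1$ to row $1$, so at least one $a$ and at least one $b$ appears, or the monomial is empty (contributing the constant term $1$). This gives the needed divisibility. I would then phrase the convergence estimate uniformly so that, together with Proposition \ref{prop: HG=Theta}, it also identifies the limit as a well-defined element of $\Z_\ell$, completing the proof. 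I expect part (i) to be the genuinely substantive obstacle, while part (ii) is a convergence estimate once the integrality of the associator coefficients is invoked.
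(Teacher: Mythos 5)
There is a genuine gap in both parts. For (i), your proposed mechanism does not work: the two homomorphisms $\iota_\varphi$ and $\iota_{\varphi'}$ agree on $x_0$ exactly (both send it to $e^{e_0}$) and differ on $x_1$ by conjugation by $\varphi^{-1}\varphi'$, so they are \emph{not} related by a single global twist, and no identity of the form $G^{\varphi'}_{\vec{01}}=h^{-1}G^{\varphi}_{\vec{01}}\sigma(h)$ holds. Moreover, the triangularity you hope for fails: $X$ is upper triangular and $Y$ is lower triangular, so a word mixing both letters specializes to a full $2\times2$ matrix, and $h(X,-Y)$ for a general group-like $h$ is neither triangular nor unipotent. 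The actual content of (i) in the paper is Theorem \ref{thm:even unitary associator=M+0}: for \emph{every} even unitary associator, $\varphi(X,-Y)$ equals one and the same explicit matrix $M_+$ built from $\Gamma_+$. This is proved entry-by-entry via the Ohno--Zagier relation (valid for associators by the regularized double shuffle relations of \cite{F11} and \cite{L}), the duality $W\mapsto W^\ast$, and $\det=1$; then $\Theta=\ev_{(X,-Y)}\circ\iota_\varphi$ is manifestly independent of $\varphi$ because both $e^X$ and $M_+^{-1}e^{-Y}M_+$ are. Without this input (or an equivalent), the independence claim is not established.

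For (ii), the assertion that the coefficients of $G^{\varphi}_{\vec{01}}(e_0,e_1)(\sigma)(z)$ lie in $\Z_\ell$ because ``the associator has $\Z_\ell$-integral coefficients'' is false: already $(\varphi|e_0e_1)=\tfrac{1}{24}$, and $\ell$-adic associators have unbounded denominators in general. Also, $f_\sigma^z$ is a pro-$\ell$ limit of words rather than a series with controllable termwise coefficients, so termwise estimates do not directly apply. The paper's route is different and is what actually makes the convergence work: one first shows (Proposition \ref{convergence for M (1)}, via von Staudt--Clausen and a Tate-algebra/Weierstrass-division argument applied to $\Gamma_+(\ell z)$) that $M_+$ can be evaluated at $(a,b,c)$ and gives $M_{+,0}\in\SL_2(\Z_\ell)$ with $M_{+,0}\equiv I_2 \bmod \ell$; then the congruences $e^{X_0}\equiv e^{-Y_0}\equiv M_{+,0}\equiv I_2\bmod\ell$ show via the Magnus embedding that the representation $x_0\mapsto e^{X_0}$, $x_1\mapsto M_{+,0}^{-1}e^{-Y_0}M_{+,0}$ extends continuously to $\hat F_2^{(\ell)}$, so $\Theta_0(f_\sigma^z)$ is defined in $\GL_2(\Z_\ell)$. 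Your observation that the $(1,1)$ entry must pass through the off-diagonal entries $\aaa$ and $\bbb$ is correct but insufficient: each coefficient of the formal series in $\Q_\ell[[\aaa,\bbb,\ccc-1]]$ could a priori have large negative valuation, and your proposal never controls this growth, nor the convergence of the gamma-factors entering $M_+$.
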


So our $\ell$-adic  hypergeometric function may be regarded to be the map
$$
\HG{a,b}{c}{-}(\sigma):
\underset{z}{\amalg}\ \pi_1^\mathrm{top}({\mathcal X}(\C);{\vec{01}},z)
\to\Q_\ell
$$
sending $\gamma_z\mapsto\HG{a,b}{c}{\gamma_z}(\sigma)$
where $z$ runs over  rational (may tangential base) 		points of $X$.

Our second main theorem  is an analogue of the Gauss hypergeometric theorem 
\eqref{eq: hypergeometric equation}. 

\begin{thm}[$\ell$-adic Gauss hypergeometric theorem]
\label{thm: l-adic Gauss's hypergeometric theorem}
Take $a,b,c\in\Z_\ell$ which satisfy \eqref{eq:ABC condition}.
Put $p=1-c$, $q=a+b+1-c=a+b+p$.
For $\sigma\in G_{\Q}$, the following equality holds:
{\small
\begin{align}
&\HG{a,b}{c}{\vec{10}}(\sigma)=
\frac{pq}{ab}\left\{\GGamma{+}{-p,-q}{-p-a,-p-b}+\GGamma{+}{p,-q}{-a,-b}\right\}\GGamma{+}{-p,q}{a,b}\cdot\GGamma{\sigma}{-p,q}{a,b}+ \\ \notag
&\left\{\frac{ab+pq}{pq}\GGamma{+}{p,q}{p+a,p+b}
-\frac{ab}{pq}\GGamma{+}{-p,q}{a,b}\right\}
\GGamma{+}{-p,-q}{-p-a,-p-b}\cdot
\GGamma{\sigma}{-p,-q}{-p-a,-p-b}.
\end{align}
}
Here, the path $\gamma:\vec{01}\leadsto\vec{10}$ is chosen to be the straight path (often denoted by $\dch$ in the literarure),
$\Gamma_\sigma$ is defined by 
the $\ell$-adic series (cf. Remark \ref{rem:Gamma})
related to the hyperadelic gamma function of Anderson (\cite{A})
and $\Gamma_+$ is defined by the \lq +'-part of the classical gamma function
(cf. Notation \ref{nota: M+}).
\end{thm}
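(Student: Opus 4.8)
The plan is to reduce the $\ell$-adic Gauss hypergeometric theorem to a computation at the tangential base point $\vec{10}$, exploiting the explicit description \eqref{eq:HG and even associator} of $\HG{a,b}{c}{z}(\sigma)$ as the $(1,1)$-entry of the two-by-two specialization of $G_{\vec{01}}^\varphi(e_0,e_1)(\sigma)(z)$ at $e_0=X$, $e_1=-Y$. First I would unwind the Galois cocycle: for the straight path $\gamma:\vec{01}\leadsto\vec{10}$ one has $f^{\vec{10}}_\sigma=\dch^{-1}\sigma(\dch)$, and by the standard interplay between $\dch$ and the associator (the \lq\lq $\mathrm{dch}$-path'' relation, as in Deligne's construction and its use by Oi), the series $G_{\vec{01}}^\varphi(e_0,e_1)(\sigma)(\vec{10})$ should be expressible through the associator $\varphi=\varphi_\sigma(e_0,e_1)$ itself together with the pro-$\ell$ cyclotomic character, after transporting the local monodromy at $1$ back to $0$. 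Concretely I expect
\[
G_{\vec{01}}^\varphi(e_0,e_1)(\sigma)(\vec{10})=\varphi_\sigma(e_0,e_1)\cdot(\text{local factor at }1),
\]
the local factor being a power of $\exp$ of the monodromy logarithm $e_1$ weighted by the cyclotomic character, which upon substitution $e_1\mapsto -Y$ contributes only a controlled correction.

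The crux is then the matrix specialization. Substituting $e_0=X$, $e_1=-Y$ turns $\varphi_\sigma(e_0,e_1)$ into an element of $\Mat_2(\Q_\ell[[\aaa,\bbb,\ccc-1]])$, and the whole point is that the relevant entries of this matrix are governed by the \emph{associated gamma function} rather than by genuinely noncommutative data. Here I would invoke the extension of the Ohno–Zagier result promised in the abstract: the claim is that for the particular nilpotent-plus-rank-one shape of $X$ and $Y$ (note $X^2=\ppp X$ up to the scalar block, $Y^2=\qqq Y$), the associator specializes so that its $(1,1)$-, $(1,2)$-, $(2,1)$- and $(2,2)$-entries are linear combinations — with coefficients the rational functions $\frac{pq}{ab}$, $\frac{ab+pq}{pq}$, $\frac{ab}{pq}$ appearing in the statement — of products $\Gamma_+\binom{*}{*}\cdot\Gamma_\sigma\binom{*}{*}$ of the \lq\lq $+$''-part $\Gamma_+$ of the classical gamma function and the hyperadelic/Galois gamma function $\Gamma_\sigma$. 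I would therefore first establish this \textbf{matrix-specialization lemma} for even unitary associators, using (a) the defining pentagon/hexagon and group-like properties to pin down the special values, (b) the characterization of $\Gamma_\sigma$ as the generating series built from $\ell$-adic Soulé-type elements / Anderson's hyperadelic gamma, and (c) the observation that, because $\varphi$ is \emph{even}, the odd-weight (and in particular the genuinely path-dependent) contributions drop out, leaving exactly the gamma-function content. This is the step that extends Ohno–Zagier from the one-variable $\Gamma$-situation to the full two-by-two associator picture, and it is where essentially all the work lies.

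With the matrix-specialization lemma in hand, the theorem follows by bookkeeping: combine $G_{\vec{01}}^\varphi(X,-Y)(\sigma)(\vec{10}) = \varphi_\sigma(X,-Y)\cdot(\text{local factor})$, read off the $(1,1)$-entry, and collect the four products of $\Gamma_+$'s and $\Gamma_\sigma$'s with the indicated coefficients $p,q,a,b$; the local factor at $1$ contributes precisely the split into the two braces $\{\cdots\}$ and the grouping of arguments $\binom{-p,-q}{-p-a,-p-b}$ versus $\binom{p,q}{p+a,p+b}$. I would also check well-definedness and convergence is inherited from Theorem~\ref{fundamental theorem of l-adic HG}, and independence of $\varphi$ from part (i) of that theorem, so the right-hand side is intrinsic. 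The main obstacle, to emphasize, is purely the matrix-specialization lemma: controlling how a general even unitary associator behaves under $e_0\mapsto X$, $e_1\mapsto -Y$ and identifying the output with the gamma-function expression — everything else (the cocycle unwinding at $\vec{10}$ and the final algebraic rearrangement) is formal once that is in place. A secondary technical point to watch is the precise normalization of $\Gamma_\sigma$ (sign conventions, the role of the odd prime $\ell$, and the compatibility of Anderson's hyperadelic gamma with the $\ell$-component) so that the coefficients come out exactly as stated rather than up to a unit.
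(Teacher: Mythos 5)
Your high-level architecture --- reduce everything to the statement that the $2\times 2$ specialization $e_0\mapsto X$, $e_1\mapsto -Y$ of an associator is governed by the associated gamma function, then do bookkeeping at $\vec{10}$ --- is indeed the paper's architecture, and you correctly locate the heart of the matter in your ``matrix-specialization lemma'' (Theorem \ref{thm: varphi=M}: $\varphi(X,-Y)=M_\varphi$). But there are two genuine gaps. First, your proposed route to that lemma would not work as described: the pentagon/hexagon equations and evenness do not by themselves yield the Ohno--Zagier-type evaluation of $\varphi(X,-Y)$. The paper's proof needs the regularized double shuffle relations for associators (the main theorem of \cite{F11}), Li's derivation \cite{L} of the Ohno--Zagier relation from double shuffle (this gives the $(1,1)$ entry), Oi's generating-function identities for the $(1,2)$ and $(2,1)$ entries, the duality/2-cycle relation to transfer $(1,2)$ to $(2,1)$, and the determinant computation $\det M_\varphi=1$ (via $\Gamma_\varphi(t)\Gamma_\varphi(-t)=\mu t/(e^{\mu t/2}-e^{-\mu t/2})$) for the $(2,2)$ entry. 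None of these inputs appear in your sketch, and ``evenness kills the path-dependent contributions'' is not the mechanism: the lemma holds for arbitrary associators, evenness entering only at the last step to replace $\Gamma_\varphi$ by $\Gamma_+$.

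Second, your unwinding of the cocycle at $\vec{10}$ is structurally off. You write $G^{\varphi}_{\vec{01}}(e_0,e_1)(\sigma)(\vec{10})=\varphi_\sigma\cdot(\text{local exponential factor at }1)$ and attribute the two-brace shape of the final formula to that local factor. In the paper there is no such local factor: one uses the $\GT(\Q_\ell)$-torsor structure of $M(\Q_\ell)$ to write $\iota_\varphi(f_\sigma)=f_\sigma(e^{e_0},\varphi^{-1}e^{e_1}\varphi)=\varphi^{-1}\varphi'$ with $(\lambda_\sigma,\varphi')=(\lambda_\sigma,f_\sigma)\circledast(1,\varphi)$ a new associator, whence $\HG{\aaa,\bbb}{\ccc}{\vec{10}}(\sigma)=[M_+^{-1}M_{\varphi'}]_{11}$; the two braces in the theorem are exactly the two cross terms of this genuine product of two full $2\times2$ matrices, with $\Gamma_{\varphi'}=\Gamma_\sigma\cdot\Gamma_+$ supplying the $\Gamma_\sigma$ factors. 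A single matrix times a diagonal exponential cannot reproduce that bilinear structure. Your remarks on convergence and independence of $\varphi$ are fine and do come from Theorem \ref{fundamental theorem of l-adic HG} and Proposition \ref{convergence for M (2)}.
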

The condition \eqref{eq:ABC condition}
for $a$, $b$, $c$ in $\HG{a,b}{c}{z}(\sigma)$
will be relaxed to $(a,b,c)$ in
$$
\mathfrak D:=
\{(a,b,c)\in\Z_\ell^2\times \Z_\ell^\times \bigm|
(a,b,c)\equiv
(0,0,1), (0,0,0)\text{ or }(0,1,1) \bmod \ell 
\}
$$
in Proposition \ref{prop:enlarge}. 
Our third main theorem  is an analogue of  Euler's transformation formula
\eqref{eq: Euler's transformation}.

\begin{thm}[$\ell$-adic Euler transformation formula]
\label{thm: l-adic Euler's transformation formula}
Let $a,b,c\in\ell\Z_\ell$ with $c\neq 0$.
Let $z$ be a rational or tangential base point of $\X$.
Then, for $\sigma\in G_\Q$, we have the equality
\begin{equation}
\HG{a,b}{c}{z}(\sigma)=
\exp\left\{(c-a-b)\rho_{1-z}(\sigma)\right\}\cdot
\HG{c-a,c-b}{c}{z}(\sigma)
\end{equation}
with  the Kummer 1-cocycle $\rho_{1-z}:G_\Q\to \Z_\ell$ 
defined by 
$\sigma((1-z)^\frac{1}{\ell^n})=
\zeta_{\ell^n}^{\rho_{1-z}(\sigma)}(1-z)^\frac{1}{\ell^n}$ for $n\in\N$
where $\zeta_{\ell^n}=\exp\{\frac{2\pi\sqrt{-1}}{\ell^n}\}$ and the $\ell^n$-th root $(1-z)^\frac{1}{\ell^n}$ is chosen along $\gamma_z$
with $1^\frac{1}{\ell^n}=1$ at $z=0$.
\end{thm}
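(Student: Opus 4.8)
The plan is to deduce the $\ell$-adic Euler transformation from the corresponding classical identity ${}_2F_1(a,b;c;z)=(1-z)^{c-a-b}\,{}_2F_1(c-a,c-b;c;z)$ together with the formalism already built: the $\ell$-adic hypergeometric function is the $(1,1)$-entry of $\iota_\varphi(f^z_\sigma)$ with $e_0\mapsto X$, $e_1\mapsto -Y$, and by Theorem \ref{fundamental theorem of l-adic HG}(i) the answer does not depend on $\varphi$. Since the statement assumes $a,b,c\in\ell\Z_\ell$, the matrices $X,Y$ have entries in $\ell\Z_\ell$, so all the relevant series converge and we are free to manipulate them formally.

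First I would recall that the classical Euler transformation at the level of the KZ fundamental solution is an instance of a gauge transformation. In Oi's setup (the source \cite{O} used for \eqref{eq:HG=11}), the hypergeometric equation is obtained from the $2\times2$ specialization $e_0\mapsto X$, $e_1\mapsto -Y$ of the KZ connection $d-\bigl(\tfrac{e_0}{z}+\tfrac{e_1}{z-1}\bigr)dz$, and Euler's transformation corresponds to conjugating this connection by the scalar gauge $(1-z)^{s}$ with $s=c-a-b$, which swaps $(a,b)$ with $(c-a,c-b)$ while fixing $c$. Concretely there is a constant invertible matrix $M\in\GL_2$ (built from the substitution $\aaa\leftrightarrow\ccc-\aaa$, $\bbb\leftrightarrow\ccc-\bbb$) and a scalar $s$ such that, writing $X'=MXM^{-1}$, $Y'=MYM^{-1}$ for the swapped matrices, one has on the complex side $G_{\vec{01}}(X,-Y)(z)=(1-z)^{s}\,M^{-1}\,G_{\vec{01}}(X',-Y')(z)\,M\cdot(\text{unipotent at }\vec{01})$; the key point is that the $(1,1)$-entries match up so that $(1-z)^{s}$ is exactly the prefactor appearing in Euler's formula. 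I would isolate this as a purely linear-algebraic lemma about the two matrices $X,Y$ (and their swapped partners), proved by direct computation — this is routine.

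Next I would transport this to the $\ell$-adic side via the group-like series $\iota_\varphi(f^z_\sigma)=G^\varphi_{\vec{01}}(e_0,e_1)(\sigma)(z)$. The Galois cocycle $f^z_\sigma=\gamma_z^{-1}\sigma(\gamma_z)$ naturally carries the Kummer cocycle $\rho_{1-z}(\sigma)$ as the "$e_1$-exponent": informally, conjugating a path by the Galois action picks up, along the generator $x_1$ around $1$, a power $\zeta_{\ell^n}^{\rho_{1-z}}$, so when one feeds $e_1\mapsto$ a matrix with eigenvalue part along the $(1-z)$-direction, the resulting series acquires a factor $\exp\{(\text{eigenvalue})\cdot\rho_{1-z}(\sigma)\}$. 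I would make this precise by the standard computation of the $e_1$-linear part of $f^z_\sigma$ (this is exactly how the $\ell$-adic polylogarithm of Wojtkowiak--Nakamura produces Kummer cocycles), and then apply the gauge-conjugation lemma with $e_0,e_1$ replaced by $X,-Y$: conjugation by $M$ turns $G^\varphi_{\vec{01}}(X,-Y)(\sigma)(z)$ into $G^\varphi_{\vec{01}}(X',-Y')(\sigma)(z)$ up to the scalar $\exp\{s\,\rho_{1-z}(\sigma)\}$ with $s=c-a-b$, and reading off the $(1,1)$-entry — using that $M$ has been chosen exactly so that it fixes the $(1,1)$-entry after swapping $(a,b)\mapsto(c-a,c-b)$ — yields
\[
\HG{a,b}{c}{z}(\sigma)=\exp\{(c-a-b)\rho_{1-z}(\sigma)\}\cdot\HG{c-a,c-b}{c}{z}(\sigma).
\]

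The main obstacle I expect is the bookkeeping at the tangential basepoint $\vec{01}$ and the compatibility of the gauge transformation with the "fake" comparison $\comp^{\vec{01}}_\varphi$: one must check that conjugating by the constant matrix $M$ commutes with $\iota_\varphi$ (it does, since $\iota_\varphi$ is a ring homomorphism and $M$ is a constant), and, more delicately, that the scalar gauge factor $(1-z)^{s}$ on the complex side has no constant term contributing at $\vec{01}$ — i.e. that the regularization chosen at $\vec{01}$ (where $1^{1/\ell^n}=1$) kills precisely the would-be boundary term, so that the only surviving contribution is the Kummer cocycle $\rho_{1-z}(\sigma)$ evaluated at the actual point $z$. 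Handling this correctly is where one must invoke the precise normalization of the path $\gamma_z$ and of the $\ell^n$-th roots stated in the theorem; once that normalization is in place the identity follows. I would also remark that, by Proposition \ref{prop:enlarge}, the hypotheses can afterwards be relaxed, but for the transformation formula the assumption $a,b,c\in\ell\Z_\ell$ is the natural one guaranteeing convergence of every series in sight.
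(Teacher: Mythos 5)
There are two genuine gaps here, and the first is fatal to the proposed route. Your ``purely linear-algebraic lemma'' is false: no constant matrix $M$ conjugates $(X,Y)$ into the Euler-transformed pair $(X',Y')$, even up to the scalar gauge $(1-z)^{s}$. Indeed $Y=\left(\begin{smallmatrix}0&0\\ \aaa&\qqq\end{smallmatrix}\right)$ has eigenvalues $0,\qqq$ with $\qqq=\aaa+\bbb+1-\ccc$, while the substitution $(\aaa,\bbb,\ccc)\mapsto(\ccc-\aaa,\ccc-\bbb,\ccc)$ sends $\qqq$ to $2-\qqq$, so $Y'$ has eigenvalues $0,2-\qqq$; and the scalar shift by $s=\ccc-\aaa-\bbb=1-\qqq$ forced by differentiating $(1-z)^{s}$ produces the spectrum $\{-s,-s-\qqq\}=\{\qqq-1,-1\}$ for $-Y-sI$ versus $\{0,\qqq-2\}$ for $-Y'$, which agree only when $\qqq=1$. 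The underlying reason is that the reduction $w\mapsto\vec v(w)=\bigl(w,\tfrac{z}{b}w'\bigr)$ depends on $b$, so the classical Euler transformation acts on the $2\times2$ system by a $z$-dependent gauge matrix, which has no image under $\iota_\varphi$ and cannot be transported by the Chen-theoretic specialization argument the paper uses for genuine iterated-integral identities. The paper avoids this entirely: after the reparametrization $\aaa'=\aaa$, $\bbb'=\aaa+1-\ccc$, $\ccc'=\aaa+\bbb+1-\ccc$, $z=1-\tfrac1w$, Euler's transformation becomes the statement that $[\V_{\vec{1\infty}}(\sigma)(w)]_{11}$ is invariant under the switch $\aaa\leftrightarrow\bbb$, and that symmetry is read off from Oi's Ohno--Zagier-type expansions, in which every entry is a series in the symmetric quantities $\aaa\bbb$, $\ppp$, $\qqq$, $\aaa\bbb+\ppp\qqq$. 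Your heuristic that the prefactor $\exp\{(c-a-b)\rho_{1-z}(\sigma)\}$ arises from the local monodromy of $f^z_\sigma$ at $1$ is the right intuition, but it only becomes a proof after moving the basepoint toward $1$ (through $\V_{\vec{10}}$ and $\V_{\vec{1\infty}}$), which your argument never does.

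The second gap concerns the domain. Under the hypothesis $a,b,c\in\ell\Z_\ell$ one has $|c-1|_\ell=1$, so condition \eqref{eq:ABC condition} fails: the entry $\ppp=1-\ccc$ of $X$ is an $\ell$-adic unit, the series defining \eqref{eq:HG and even associator} need not converge, and the theorem is really about the extension $\HG{a,b}{c}{z}(\sigma):=\HGdag{a,b}{c}{z}(\sigma)$ built from $\V_{\vec{10}}$ (Definition \ref{def:two 2F1} together with Proposition \ref{prop:enlarge}). Your opening claim that ``the matrices $X,Y$ have entries in $\ell\Z_\ell$, so all the relevant series converge'' is therefore incorrect, and the proposal never engages with the dagger function that the statement actually concerns.
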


%

The above two theorems are based on the computations of Oi (\cite{O})
and 
the following key result 
on associators.

\begin{thm}\label{thm:even unitary associator=M+0}
The following equality holds for
any even unitary associator $\varphi$ (see Definition \ref{defn:associator}):
$$\varphi(X,-Y)=M_{+}$$
where $M_{+}$ is the matrix  defined in Notation \ref{nota: M+}.
\end{thm}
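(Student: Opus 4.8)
The plan is to determine $\varphi(X,-Y)$ from the defining equations of Definition~\ref{defn:associator} — group-likeness together with the pentagon, hexagon and mirror (``unitary''/``even'') relations — after applying the continuous $\Q_\ell$-algebra homomorphism $\Q_\ell\langle\langle e_0,e_1\rangle\rangle\to\Mat_2\!\left(\Q_\ell[[\aaa,\bbb,\ccc-1]]\right)$ sending $e_0\mapsto X$ and $e_1\mapsto -Y$. Every entry of $X$ and of $Y$ lies in the maximal ideal $(\aaa,\bbb,\ccc-1)$, so $\varphi(X,-Y)$ converges there; moreover a direct multiplication gives the collapse identities $X^2=\ppp X$, $Y^2=\qqq Y$, $XYX=(\aaa\bbb+\ppp\qqq)X$, $YXY=(\aaa\bbb+\ppp\qqq)Y$, and $(Z-\aaa I)(Z-\bbb I)=0$ for the image $Z:=-X+Y$ of $e_\infty=-e_0-e_1$. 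These make the words in $X$ and $-Y$, as well as the exponentials $e^{\mu X/2}$, $e^{\mu Y/2}$, $e^{\mu Z/2}$ occurring in the hexagon relation, explicitly computable, and reduce matters to an identity tying $\varphi(X,-Y)$ to the companion evaluations $\varphi(-Y,Z)$ and $\varphi(Z,X)$.

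Next I would read off functional equations for the matrix-valued function $\Phi(\aaa,\bbb,\ccc):=\varphi(X,-Y)$. Feeding the mirror relation $\varphi(e_0,e_1)\varphi(e_1,e_0)=1$ and the hexagon relation into the specialization, and using the symmetries of the triple $(X,-Y,Z)$ under permutations of the parameters (transposition interchanges the pairs $(\bbb,\ppp)\leftrightarrow(\aaa,\qqq)$, while $(X,-Y,Z)$ carries the cyclic symmetry of $\{0,1,\infty\}$), those relations turn into a closed system of equations for $\Phi$. This system is the even-degree avatar of the functional equations underlying the classical Gauss connection formula for $\HG{\aaa,\bbb}{\ccc}{1}$ — the KZ-theoretic incarnation being that $\Phi_{\mathrm{KZ}}(X,-Y)$ is the connection matrix along $\dch$ between the fundamental solutions at $0$ and $1$ of the rank-two KZ system $d-\bigl(\tfrac{X}{z}+\tfrac{-Y}{z-1}\bigr)dz$ (which is precisely the hypergeometric equation, the normalisations $\ppp=1-\ccc$, $\qqq=\aaa+\bbb+1-\ccc$ matching its local exponents), with entries the classical ratios of $\Gamma$-values computed by Oi in \cite{O} and, in the relevant entry, by the Ohno--Zagier generating series. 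One then checks that $M_+$ solves the specialized system and that, among group-like matrices with the prescribed reduction modulo the augmentation ideal, the solution is unique; here the ``even'' hypothesis is exactly what annihilates the odd-degree, $\zeta(2k+1)$-type contributions, so that after passing to logarithms and solving the resulting recursion the entries are forced to be built from the even part $\Gamma_+$ of $\Gamma$ rather than from $\Gamma$ itself — giving $\varphi(X,-Y)=M_+$ for every even unitary associator (which in particular strengthens Theorem~\ref{fundamental theorem of l-adic HG}(i)).

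The hard part will be this last rigidity step: showing that the equations extracted from the hexagon and mirror relations — possibly supplemented by the pentagon relation — are strong enough to admit $M_+$ as their \emph{only} solution, equivalently that the $2\times2$ specialization collapses the torsor of even unitary associators to a point. A secondary but genuinely delicate point is the bookkeeping of signs, orientations and normalisations — the $\pm\mu/2$ in the exponentials, the direction of the straight path $\dch\colon\vec{01}\leadsto\vec{10}$, transposition versus conjugation — so that the matrix identity lands on $M_+$ exactly as in Notation~\ref{nota: M+}, and not merely up to an automorphism of $\Mat_2$.
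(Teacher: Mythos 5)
Your reduction of words in $X$ and $-Y$ via the collapse identities $X^2=\ppp X$, $Y^2=\qqq Y$, $XYX=(\aaa\bbb+\ppp\qqq)X$ is correct and is indeed what underlies the combinatorics here, but the proposal has a genuine gap at exactly the point you yourself flag as ``the hard part'': you never establish that the system of matrix equations obtained by specializing the $2$-cycle, $3$-cycle and pentagon relations admits $M_+$ as its \emph{unique} group-like solution. That rigidity is not a bookkeeping issue --- it is the entire content of the theorem --- and there is no reason to expect the mirror/hexagon-type relations to supply it. Concretely, the $(1,1)$ entry of $\varphi(X,-Y)$ is the generating series
$1+\aaa\bbb\sum g_0(k,n,s)\,\ppp^{k-n-s}\qqq^{n-s}(\aaa\bbb+\ppp\qqq)^{s-1}$
of the weight--depth--height sums of the coefficients $\zeta_\varphi(\mathbf k)$, and identifying it with $\GGamma{\varphi}{-\ppp,-\qqq}{-\ppp-\aaa,-\ppp-\bbb}$ is precisely the Ohno--Zagier relation, which is a deep identity among the $\zeta_\varphi(\mathbf k)$ and does not follow from duality or hexagon-type constraints. (Note also that in Definition \ref{defn:associator} the hexagons are not among the defining relations; only the $2$- and $3$-cycle relations are available, themselves derived from the pentagon via \cite{F10}.)

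The paper supplies the missing input in Theorem \ref{thm: varphi=M}, which holds for \emph{every} associator, not just even unitary ones: by \cite{F11} the coefficients of any associator satisfy the regularized double shuffle relations (this is where the pentagon is really used, and it is a substantial theorem rather than a formal specialization); by \cite{L} those relations imply the Ohno--Zagier relation; and by \cite{O} Lemma 3.1 the Ohno--Zagier generating series is exactly $[\varphi(X,-Y)]_{11}$. The $(1,2)$ entry then follows by a change of variables in Oi's identity, the $(2,1)$ entry by the duality $(\varphi|W)\leftrightarrow(\varphi|W^\ast)$ coming from the $2$-cycle relation and group-likeness, and the $(2,2)$ entry from $\det\varphi(X,-Y)=1=\det M_\varphi$ (Proposition \ref{prop: M in SL2}). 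Evenness enters only at the very last step, via $\Gamma_\varphi=\Gamma_+$ from Remark \ref{rem:Gamma}(2), turning $M_\varphi$ into $M_+$. To salvage your route you would need either to actually prove uniqueness for your functional system --- which I do not believe holds without the pentagon-derived double shuffle input --- or to import the Ohno--Zagier relation as the paper does.
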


The above theorem  is derived from  Theorem \ref{thm: varphi=M} 
where we observe the Ohno-Zagier relation \cite{OZ}
in the $(1,1)$ entry.
Note  that
similar but different  matrix specializations of $f_\sigma^{z}$
with $z=\vec{10}$, $1/2$, $e^{2\pi i/6}$
and their connections with various arithmetic invariants
are investigated in \cite{N,NT,NTY}.

The remainder of this paper proceeds as follows:
In \S \ref{sec:complex case}, we review 
how the fundamental solutions of the Gauss hypergeometric differential equation
are obtained from those of the KZ equation.
The arguments are transformed to the $\ell$-adic setting,  and
$\ell$-adic variants of the solutions 
are constructed
in \S \ref{sec:l-adic 6 solutions}.
We prove the fundamental theorem 
(Theorem \ref{fundamental theorem of l-adic HG}), 
the $\ell$-adic Gauss hypergeometric theorem
(Theorem \ref{thm: l-adic Gauss's hypergeometric theorem}),
the above key theorem (Theorem \ref{thm:even unitary associator=M+0}) 
in \S \ref{sec:Gauss's hypergeometric theorem},
and then
the $\ell$-adic Euler transformation formula
(Theorem \ref{thm: l-adic Euler's transformation formula})
is proved 
in \S \ref{sec:Euler's transformation formula}.

\section{KZ equation and the hypergeometric function}\label{sec:complex case}
In this section, we recall the basic properties of the fundamental solutions of
the  KZ equation and of the hypergeometric equation (cf. \cite[\S 4.2]{EFK}),
which  play a role in the subsequent sections.

The (formal) {\it KZ (Knizhnik-Zamolodchikov) equation} is the differential equation
$$
\frac{d}{dz}G(z)=\{\frac{e_0}{z}+\frac{e_1}{z-1}\}\cdot G(z)
$$
where $G(z)$ is analytic (that is, each coefficient is analytic) in complex variables with values in 
the noncommutative formal power series ring 
$\C\langle\langle e_0,e_1\rangle\rangle$.
It has singularities at $z=0$, $1$ and $\infty$
that are  regular  and Fuchsian.
In \cite{Dr89}\S 3, Drinfeld  considers the fundamental solution 
\begin{equation}\label{eq:G01}
G_{\vec{01}}(e_0,e_1)(z)
\end{equation}
with the asymptotic property
$G_{\vec{01}}(e_0,e_1)(z)z^{-e_0}\to 1$ when $z\in\R_+$ approaches $0$.
Multiple polylogarithms appear as its coefficients (cf. \cite{F03,F04}).
He further investigates 5 other solutions 
with certain specific asymptotic properties,
which are described as
\begin{align}\label{eqB}
G_{\vec{10}}(e_0,e_1)(z)
&=G_{\vec{01}}(e_1,e_0)(1-z)
, \\ \notag
G_{\vec{1\infty}}(e_0,e_1)(z)
&=G_{\vec{01}}(e_1,e_\infty)(1-\frac{1}{z})
, \\ \notag
G_{\vec{\infty 1}}(e_0,e_1)(z)
&=G_{\vec{01}}(e_\infty,e_1)(\frac{1}{z})
, \\ \notag
G_{\vec{\infty 0}}(e_0,e_1)(z)
&=G_{\vec{01}}(e_\infty,e_0)(\frac{1}{1-z})
, \\ \notag
G_{\vec{0\infty}}(e_0,e_1)(z)
&=G_{\vec{01}}(e_0,e_\infty)(\frac{z}{z-1})
\end{align}
with $e_\infty=-e_0-e_1$
under appropriate choice of branches.

{\it Gauss's hypergeometric function} (consult \cite{AAR} for example)
is the complex analytic function 
defined by the power series
$$
\HG{a,b}{c}{z}:=\sum_{n=0}^\infty\frac{(a)_n(b)_n}{(c)_n n!}z^n
$$
which converges for $|z|<1$.
Here, $a,b,c$ are complex numbers with $c\neq 0,-1,-2,\cdots$, and
$(s)_0=1$, $(s)_n=s(s+1)\cdots (s+n-1)$ are the Pochhammer symbols.
The hypergeometric function has 
a rich history.
The contributions of  Euler and Gauss are significant, and 
in particular, 
the following  identities might be the most celebrated:
\begin{itemize}
\item
Hypergeometric theorem (due to Gauss)
\begin{equation}\label{eq: hypergeometric equation}
\HG{a,b}{c}{1}=\frac{\Gamma(c)\Gamma(c-a-b)}{\Gamma(c-a)\Gamma(c-b)},
\end{equation}
\item
Transformation theorem (due to Euler)
\begin{equation}\label{eq: Euler's transformation}
\HG{a,b}{c}{z}=(1-z)^{c-a-b}\HG{c-a,c-b}{c}{z}.
\end{equation}
\end{itemize}

The hypergeometric function is a solution of {\it Euler's hypergeometric differential equation}:
$$
z(1-z)\frac{d^2w}{dz^2}+\{c-(a+b+1)z\}\frac{dw}{dz}-abw=0,
$$
which allows the analytic continuation of the function along a topological path
starting from $0$ to any $z$ in $\X(\C)$.
The 
differential equation can be reformulated as 
\begin{equation*}
\frac{d}{dz}\vec{v}
=\left\{\frac{1}{z}X_0+\frac{1}{1-z}Y_0\right\}\cdot \vec{v}
\end{equation*}
where we define
${X_0}=
\begin{pmatrix}
0 & b \\
0 & u
\end{pmatrix}$ and
${Y_0}=
\begin{pmatrix}
0 & 0 \\
a & v
\end{pmatrix}
\in\Mat_2(\C)
$ with
$u=1-c$ and $v=a+b+1-c$ and, put
$\vec{v}=\vec{v}(w)=
\begin{pmatrix}
w \\ \frac{z}{b}\frac{dw}{dz}
\end{pmatrix}\in\C^2$
when $b\neq 0$.
Whence a solution $G(e_0,e_1)(z)$ of the KZ-equation yields 
two solutions $w_1$, $w_2$  with
$(\vec v(w_1), \vec v(w_2))=G(X_0,-Y_0)(z)$ 
of the above differential equation when it converges
as explained in \cite{O}.
Specifically, we have
\begin{equation}\label{eq:HG=11}
 \HG{a,b}{c}{z}=[G_{\vec{01}}(X_0,-Y_0)(z)]_{11}
\end{equation}
where the right lower suffix $11$ means the $(1,1)$ entry.
By writing 
\begin{align}\label{eqC}
&\V_{\vec{01}}(z):=
G_{\vec{01}}(X_0,-Y_0)(z)\cdot
\begin{pmatrix}
1 & 1 \\
0 & \frac{p}{b}
\end{pmatrix}
, \\ 
\notag
&\V_{\vec{10}}(z):=
G_{\vec{10}}(X_0,-Y_0)(z)\cdot
\begin{pmatrix}
1 & 0 \\
\frac{-a}{q} & \frac{q-1}{b}
\end{pmatrix}
, \\ 
\notag
&\V_{\vec{1\infty}}(z):=
G_{\vec{1\infty}}(X_0,-Y_0)(z)\cdot
\begin{pmatrix}
1 & 0 \\
\frac{-a}{q} & \Add{\frac{1-q}{b}}
\end{pmatrix}
, \\ 
\notag
&\V_{\vec{\infty 1}}(z):=
G_{\vec{\infty 1}}(X_0,-Y_0)(z)\cdot
\begin{pmatrix}
1 & \Add{1} \\
\frac{-a}{b} & -1
\end{pmatrix}
, \\ 
\notag
&\V_{\vec{\infty 0}}(z):=
G_{\vec{\infty 0}}(X_0,-Y_0)(z)\cdot
\begin{pmatrix}
1 & 1 \\
\frac{-a}{b} & -1
\end{pmatrix},\\
\notag
&\V_{\vec{0\infty}}(z):=
G_{\vec{0\infty}}(X_0,-Y_0)(z)\cdot
\begin{pmatrix}
1 & 1 \\
0 & \frac{p}{b}
\end{pmatrix}
.
\end{align}
we recover certain scalar multiples of 
half of the so-called Kummer's 24 solutions
\begin{align}\label{eqD}
\begin{pmatrix} 1, & 0 \end{pmatrix}\cdot \V_{\vec{01}}(z) &=
\begin{pmatrix} \HG{a,b}{c}{z}, & z^{1-c}\HG{b+1-c,a+1-c}{2-c}{z}\end{pmatrix}, \\
\notag
\begin{pmatrix} 1, & 0 \end{pmatrix}\cdot \V_{\vec{10}}(z)
&=
\begin{pmatrix} \HG{a,b}{a+b+1-c}{1-z}, & (1-z)^{c-a-b}\HG{c-a,c-b}{c-a-b+1}{1-z}\end{pmatrix}, \\
\notag
\begin{pmatrix} 1, & 0 \end{pmatrix}\cdot \V_{\vec{1\infty}}(z)
&=
\begin{pmatrix} z^{-a}\HG{a,a+1-c}{a+b-c+1}{1-\frac{1}{z}}, & 
z^{b-c}(z-1)^{c-a-b}\HG{1-b,c-b}{1-a-b+c}{1-\frac{1}{z}}
\end{pmatrix}, \\
\notag
\begin{pmatrix} 1, & 0 \end{pmatrix}\cdot \V_{\vec{\infty 1}}(z)
&=
\begin{pmatrix} z^{-a}\HG{a,a+1-c}{a-b+1}{\frac{1}{z}}, & 
z^{-b}\HG{b+1-c,b}{b-a+1}{\frac{1}{z}}
\end{pmatrix}, \\
\notag
\begin{pmatrix} 1, & 0 \end{pmatrix}\cdot \V_{\vec{\infty 0}}(z)
&=
\begin{pmatrix} (1-z)^{-a}\HG{a,c-b}{a-b+1}{\frac{1}{1-z}},& 
(1-z)^{-b}\HG{c-a,b}{1-a+b}{\frac{1}{1-z}}
\end{pmatrix},\\
\notag
\begin{pmatrix} 1, & 0 \end{pmatrix}\cdot \V_{\vec{0\infty}}(z)
&=
\begin{pmatrix} (1-z)^{-a}\HG{a,c-b}{c}{\frac{z}{z-1}}, & 
(1-z)^{-a}(\frac{z}{z-1})^{1-c}\HG{1-b,a-c+1}{2-c}{\frac{z}{z-1}}
\end{pmatrix}
\end{align}
where we consider the branch  by the principal value
under  appropriate conditions for 
$a,b,c$.
We note that the other half of the 24 solutions can be obtained by 
Euler's
transformation formula.

\section{$\ell$-adic analogues of the six solutions}
\label{sec:l-adic 6 solutions}
We recall Drinfeld's definition (\cite{Dr}) of associators and the 
Grothendieck-Teichm\"{u}ller group.
Then, we introduce $\ell$-adic analogues of the six solutions
of the KZ-equation and of the hypergeometric equation
discussed in \S\ref{sec:complex case}.
 
Let $\K$ be a field with characteristic $0$.
Let $\frak f_2$ be the free Lie algebra over $\K$ with two variables $e_0$ and $e_1$
and $U\frak f_2:=\K\langle e_0,e_1\rangle$ be its universal enveloping algebra.
We denote $\widehat{\frak f_2}$ and $\widehat{U\frak f_2}:=\K\langle\langle e_0,e_1\rangle\rangle$ to be their completions by
degrees. 
A {\it word} means a monic monomial element (including $1$) in $\widehat{U\frak f_2}$, and
for each $\varphi$ in $\widehat{U\frak f_2}$,
we denote $(\varphi|W)$ to be the coefficient of $\varphi$ in $W$.

\begin{defn}[\cite{Dr, F10}]\label{defn:associator}
(1).
The set  $M(\K)$ of {\it associators}
is a collection of pairs $(\mu,\varphi)$ with
$\mu\in\K^\times$ and $\varphi\in \widehat{U\mathfrak f_2}$
satisfying the following:
\begin{itemize}
\item the condition on the quadratic term: $(\varphi|e_0e_1)=\frac{\mu^2}{24}$,
\item the commutator group-like condition: $\varphi\in \exp [\hat{\mathfrak f}_2,\hat{\mathfrak f}_2]$,
\item the pentagon equation:
$\varphi_{345}\varphi_{512}\varphi_{234}\varphi_{451}\varphi_{123}=1$
in  $\widehat{U\mathfrak P_5}$.
\end{itemize}
Here, $\exp [\hat{\mathfrak f}_2,\hat{\mathfrak f}_2]$ is the image of
the topological commutator $[\hat{\mathfrak f}_2,\hat{\mathfrak f}_2]$
of $\hat{\mathfrak f}_2$
under the exponential map, and
$\frak P_5$ is the Lie algebra generated by $t_{ij}$ ($i,j\in\Z/5$)
with the relations
\begin{itemize}
\item $t_{ij}=t_{ji}$, \quad $t_{ii}=0$,
\item $\sum\nolimits_{j\in\Z/5}t_{ij}=0\quad  (\forall i\in \Z/5)$,
\item $[t_{ij},t_{kl}]=0 \quad \text{for}\quad \{i,j\}\cap\{k,l\}=\emptyset$.
\end{itemize}
For  $i,j,k\in\Z/5$,  $\varphi_{ijk}$ means the image of $\varphi$ under the 
embedding $\widehat{U\mathfrak f_2}\to\widehat{U\mathfrak P_5}$
sending $e_0\mapsto t_{ij}$ and $e_1\mapsto t_{jk}$. 
A pair  $(\mu,\varphi)$ 
is called a  {\it rational} (resp. {\it $\ell$-adic) associator} when $\K$ is taken to be $\Q$ (resp. $\Q_\ell$ ).
A series $\varphi\in \widehat{U\mathfrak f_2}$ is called a {\it unitary associator}
when $(1,\varphi)$ forms an associator. 
A unitary associator is called {\it even} when $\varphi(-e_0,-e_1)=\varphi(e_0,e_1)$ holds.

(2). {\it The Grothendieck-Teichm\"{u}ller group} $\GT(\K)$ is the set of collections of pairs  $(\lambda, f)$
with $\lambda\in\K^\times$ and $f\in F_2(\K)$
satisfying the following:
\begin{itemize}
\item the condition on the quadratic term: $(f(e^{e_o},e^{e_1})|e_0e_1)=\frac{\lambda^2-1}{24}$
\item the commutator group-like condition: $f\in [F_2(\K),F_2(\K)]$
\item the pentagon equation:
$
f(x^*_{12},x^*_{23})f(x^*_{34},x^*_{45})f(x^*_{51},x^*_{12})f(x^*_{23},x^*_{34})f(x^*_{45},x^*_{51})=1.
$
\end{itemize}
Here we denote the Malcev completion of $F_2$ by $F_2(\K)$ and
its topological commutator  by $[F_2(\K),F_2(\K)]$.
For any group homomorphism $F_2(\K)\to H$ sending $x_0\mapsto \alpha$
and $x_1\mapsto \beta$, the symbol $f(\alpha,\beta)$ stands for
the image of each $f\in F_2(\K)$. 
The last equation is in the Malcev completion ${P}^*_5(\mathbb K)$
of the pure sphere braid group $P_5^*$ with $5$ strings,
and $x^*_{ij}$s are the standard generators (cf. \cite{EF1}).

(3).
In \cite{Dr}, it is shown that 
the above set-theoretically defined  ${\GT}(\mathbb K)$ indeed forms a 
group
whose product is induced from that of $\mathrm{Aut}{F}_2(\mathbb K)$
and is given by
$$
(\lambda_1,f_1)\circledast (\lambda_2,f_2)
:=\Bigl(\lambda_2\lambda_1, f_1(f_2x^{\lambda_2}f_2^{-1},y^{\lambda_2})\cdot f_2\Bigr)
=\Bigl(\lambda_2\lambda_1,f_2\cdot f_1(x^{\lambda_2},f_2^{-1}y^{\lambda_2}f_2)\Bigr).
$$
The associator set $M(\mathbb K)$ forms a left
$\GT(\mathbb K)$-torsor by
$$
 (\lambda,f)\circledast (\mu,\varphi):=\left(\lambda\mu, \ f(\varphi e^{\mu A}\varphi^{-1},e^{\mu B})\cdot\varphi\right)
=\left(\lambda\mu, \ \varphi\cdot f( e^{\mu A},\varphi^{-1}e^{\mu B}\varphi)\right)
$$
for $(\mu,\varphi)\in M(\mathbb K)$ and $(\lambda,f)\in \GT(\mathbb K)$.
\end{defn}

\begin{rem} 
In \cite{F10}, it is shown that the above definition of associators 
(resp. the Grothendieck-Teichm\"{u}ller group)
implies
the so-called {\it 2-cycle relation}
$$\varphi(e_0,e_1)\varphi(e_1,e_0)=1
\qquad(\text{ resp. }f(x_0,x_1)f(x_1,x_0)=1)
$$
and  the {\it 3-cycle relation}
\begin{align*}
&e^{\frac{\mu e_0}{2}}\varphi(e_\infty,e_0)e^{\frac{\mu e_\infty}{2}}\varphi(e_1,e_\infty)
e^{\frac{\mu e_1}{2}} \varphi(e_0,e_1)=1 \\
(\text{resp.}&\qquad f(x_\infty,x_0)x_\infty^m f(x_1,x_\infty)x_1^m f(x_0,x_1)x^m=1
\quad\text{with}\quad m=\frac{\lambda-1}{2}),
\end{align*}
which are originally imposed  in the definition of
$M(\K)$ and $\GT(\K)$. 
\end{rem}

\begin{rem}[\cite{Dr}]
(1). 
Rational even unitary associators exist.

(2). 
The generating series $\varPhi_{\mathrm KZ}$ of multiple zeta values,
that is defined by 
$\varPhi_{\mathrm KZ}=G_{\vec{10}}(e_0,e_1)(z)^{-1}G_{\vec{01}}(e_0,e_1)(z)$ 
forms an associator with $\mu=2\pi\sqrt{-1}$ and $\K=\C$. 

(3). The Galois action of $G_\Q$ on 
$\hat F_2^{\ell}=\hat\pi_1^\ell(\X_{\bar \Q};\vec{01})$
(in \S \ref{introduction})
induces a homomorphism
\begin{equation}\label{eq: GQ to GT}
G_\Q\to \GT(\Q_\ell). 
\end{equation}
\end{rem}

The associated  gamma function is one  of our main tools:

\begin{defn}[cf. \cite{EF1}]\label{defn: associated gamma function}
%
%
For an associator $(\mu,\varphi)$,  the {\it associated  gamma function}
is defined to be
$$\Gamma_{\varphi}(t):=\exp\left\{\sum_{n=1}^\infty 
\frac{(-1)^{n+1}}{n}
(\varphi|e_0^{n-1}e_1)t^n\right\}\in\K[[t]].$$
Similarly  for $(\lambda, f)\in \GT(\K)$,
it is defined to be
\begin{equation*}\label{eq:Gamma sigma}
\Gamma_{f}(t):=\exp\left\{\sum_{n=1}^\infty 
\frac{(-1)^{n+1}}{n}
\left(f(e^{e_0},e^{e_1})|e_0^{n-1}e_1\right)t^n\right\}
\in\K[[t]].
\end{equation*}
\end{defn}

\begin{rem}\label{rem:Gamma}
(1).
When $(\mu,\varphi)=(2\pi\sqrt{-1},\varPhi_{\mathrm KZ})$, it is equal to
$e^{\gamma t}\Gamma (1+t)=\exp{\{\sum_{n=2}^\infty\frac{\zeta(n)}{n}(-t)^n}\}$
where $\Gamma$ is the classical gamma function  and
$\gamma$ is the Euler-Mascheroni constant.

(2). When $\varphi$ is an even unitary associator, 
we have $\Gamma_\varphi(t)=\Gamma_+(t)$ (cf. \cite{EF1} Lemma 9.5).

(3). Let $\sigma\in G_\Q$, which corresponds to 
$(\lambda_\sigma, f_\sigma)\in \GT(\Q_\ell)$
under \eqref{eq: GQ to GT}.
Write $\Gamma_{\sigma}(t):=\Gamma_{f_\sigma}(t)$.
Then, it is calculated to be
$$
\Gamma_{\sigma}(t)=\exp\left\{
\sum_{m>1}
\frac{\kappa^{(l)*}_m(\sigma)}{m!}t^m
\right\}
$$
($\kappa^{(l)*}_m(\sigma)$ 
is 
the $\ell$-adic $m$-th Soul\'{e} cocycle 
when $m$ is odd),
which is the series related to Anderson's (\cite{A}) hyperadelic gamma function 
(consult \cite{Ih90} and \cite{F06}).

(4).
When $ (\mu',\varphi')=(\lambda,f)\circledast (\mu,\varphi)$,
we have
\begin{equation}\label{eq: Gamma composition}
\Gamma_{\varphi'}(t)=\Gamma_f(\mu t)\Gamma_\varphi(t)
\end{equation}
by definition.
\end{rem}

Isomorphisms between 
$F_2(\K)$ and $\exp\hat {\mathfrak f_2}$
deduced by associators are discussed in
the literature (cf. \cite{BN, EF1}):

\begin{defn}
\label{defn: fake comparison}
For an associator  $(\mu,\varphi)$, 
the {\it fake comparison isomorphism}
$$
\comp_\varphi^{\vec{01}}: F_2(\K)\to 
\exp\hat {\mathfrak f_2}
$$
is the isomorphism given by $x\mapsto \exp(\mu e_0)$ and
$y\mapsto \varphi^{-1}\exp(\mu e_1)\varphi$.
%
\end{defn}

\begin{rem}
When $(\mu,\varphi)=(2\pi\sqrt{-1},\varPhi_{\mathrm KZ})$,
it agrees with the Betti-de Rham comparison isomorphism of 
the motivic fundamental group
$\pi_1^{\mathcal M}({\mathcal X},\vec{01}) $ (cf. \cite{De}).
\end{rem}

Under the natural inclusion $F_2^\ell\hookrightarrow F_2(\Q_\ell)$,
we regard $F_2^\ell$ as a topological subgroup of $F_2(\Q_\ell)$ (equipped with the $\ell$-adic topology).
For an $\ell$-adic  unitary associator $\varphi$,
we define 
\begin{equation}\label{eq:iota-varphi}
\iota_{\varphi}:=\comp_\varphi^{\vec{01}}|_{F_2^\ell},
\end{equation}
that is,
the continuous group homomorphism
\begin{equation*}
\iota_{\varphi}:\widehat F_2^{\ell}\to F_2^{\mathrm{DR}}(\Q_\ell)
\quad (\subset\Q_\ell\langle\langle e_0,e_1 \rangle\rangle^\times)
\end{equation*}
sending $x_0\mapsto e^{e_0}$ and
$x_1\mapsto \varphi^{-1}e^{e_1}\varphi$.
Then, we have
$\iota_\varphi(x_\infty)
=\Ad\left(\varphi(e_0,e_\infty)e^{-\frac{e_0}{2}}\right)^{-1}(e^{e_\infty})$
by the 2- and 3-cycle relations for $\varphi$.
Here, for invertible elements $u$ and $v$, we mean
$\Ad(u)(v)=uvu^{-1}$.  
We write
$$
G_{\vec{01}}^{\varphi}(e_0,e_1)(\sigma)(z):=\iota_{\varphi}(f_\sigma^z)
\in \Q_\ell\langle\langle e_0,e_1 \rangle\rangle.
$$
Following \eqref{eqB} in the complex case, 
we consider
\begin{align*}
G_{\vec{10}}^{\varphi}(e_0,e_1)(\sigma)(z)
&:=G_{\vec{01}}^{\varphi}(e_1,e_0)(\sigma)(1-z), \\
G_{\vec{1\infty}}^{\varphi}(e_0,e_1)(\sigma)(z)
&:=
G_{\vec{01}}^{\varphi}(e_1,e_\infty)(\sigma)(1-\frac{1}{z})
, \\
G_{\vec{\infty 1}}^{\varphi}(e_0,e_1)(\sigma)(z)
&:=
G_{\vec{01}}^{\varphi}(e_\infty,e_1)(\sigma)(\frac{1}{z})
, \\
G_{\vec{\infty 0}}^{\varphi}(e_0,e_1)(\sigma)(z)
&:=
G_{\vec{01}}^{\varphi}(e_\infty,e_0)(\sigma)(\frac{1}{1-z})
, \\
G_{\vec{0\infty}}^{\varphi}(e_0,e_1)(\sigma)(z)
&:=
G_{\vec{01}}^{\varphi}(e_0,e_\infty)(\sigma)(\frac{z}{z-1})
\end{align*}
which we regard as $\ell$-adic analogues of the six fundamental solutions of the
KZ-equation. 

\begin{rem}
We note  that the $\ell$-adic polylogarithms discussed in \cite{NW, W}
can be extracted as coefficients of coefficients of
$G_{\vec{01}}^{\varphi}(e_0,e_1)(\sigma)(z)$.
\end{rem}

Let $\aaa,\bbb,\ccc$ be variables.
Write $\ppp=1-\ccc$, $\qqq=\aaa+\bbb+1-\ccc=\aaa+\bbb+\ppp$ and
$$
X=
\begin{pmatrix}
0 & \bbb \\
0 & \ppp
\end{pmatrix}, \
Y=
\begin{pmatrix}
0 & 0 \\
\aaa & \qqq
\end{pmatrix}\in \Mat_2(\K[\aaa,\bbb,\ccc-1]).
$$
Write $\PP:=\Q_\ell[[\aaa,\bbb,\ccc-1]]$. 
Let $\varphi$ be an (even) unitary associator.
Following \eqref{eqC}, 
we consider $\ell$-adic analogues of six solutions 
of the hypergeometric equation:
\begin{align}\label{eq:PhiG01}
&\V_{\vec{01}}^{\varphi}(\sigma)(z):=
G_{\vec{01}}^{\varphi}(X,-Y)(\sigma)(z)\cdot
\begin{pmatrix}
1 & 1 \\
0 & \frac{\ppp}{\bbb}
\end{pmatrix}
\in \GL_2(\PP[\frac{1}{\bbb}]), \\ 
\label{eq:PhiG10}
&\V_{\vec{10}}^{\varphi}(\sigma)(z):=
G_{\vec{10}}^{\varphi}(X,-Y)(\sigma)(z)\cdot
\begin{pmatrix}
1 & 0 \\
\frac{-\aaa}{\qqq} & \frac{\qqq-1}{\bbb}
\end{pmatrix}
\in \GL_2(\PP[\frac{1}{\bbb\qqq}]), \\
\label{eq:PhiG1infty}
&\V_{\vec{1\infty}}^{\varphi}(\sigma)(z):=
G_{\vec{1\infty}}^{\varphi}(X,-Y)(\sigma)(z)\cdot
\begin{pmatrix}
1 & 0 \\
\frac{-\aaa}{\qqq} & \Add{\frac{1-\qqq}{\bbb}}
\end{pmatrix}
\in \GL_2(\PP[\frac{1}{\bbb\qqq}]), \\
\label{eq:PhiGinfty1}
&\V_{\vec{\infty 1}}^{\varphi}(\sigma)(z):=
G_{\vec{\infty 1}}^{\varphi}(X,-Y)(\sigma)(z)\cdot
\begin{pmatrix}
1 & \Add{1} \\
\frac{-\aaa}{\bbb} & -1
\end{pmatrix}
\in \GL_2(\PP[\frac{1}{\bbb}]), \\ 
\label{eq:PhiGinfty0}
&\V_{\vec{\infty 0}}^{\varphi}(\sigma)(z):=
G_{\vec{\infty 0}}^{\varphi}(X,-Y)(\sigma)(z)\cdot
\begin{pmatrix}
1 & 1 \\
\frac{-\aaa}{\bbb} & -1
\end{pmatrix}
\in \GL_2(\PP[\frac{1}{\bbb}]), \\
\label{eq:PhiG0infty}
&\V_{\vec{0\infty}}^{\varphi}(\sigma)(z):=
G_{\vec{0\infty}}^{\varphi}(X,-Y)(\sigma)(z)\cdot
\begin{pmatrix}
1 & 1 \\
0 & \frac{\ppp}{\bbb}
\end{pmatrix}
\in \GL_2(\PP[\frac{1}{\bbb}]).
\end{align}
These matrices will be employed to relax our assumption \eqref{eq:ABC condition}
and to show
Euler's transformation formula
for the $\ell$-adic hypergeometric function
in \S \ref{sec:Euler's transformation formula}.

\section{$\ell$-adic Gauss hypergeometric theorem and Ohno-Zagier relation}
\label{sec:Gauss's hypergeometric theorem}
In this section,
we present a proof of
the fundamental theorem 
(Theorem \ref{fundamental theorem of l-adic HG}) and
the $\ell$-adic Gauss hypergeometric theorem
(Theorem \ref{thm: l-adic Gauss's hypergeometric theorem}). 
To do so, we present  a key result (Theorem \ref{thm: varphi=M})
that the evaluation of the two-by-two matrices
$X$ and $Y$ for any  associator is described explicitly in terms of
the associated gamma function,
where the Ohno-Zagier relation is observed as the $(1,1)$ entry,
which implies Theorem \ref{thm:even unitary associator=M+0}.

\begin{nota}\label{notation for M}
For an associator  $(\mu,\varphi)\in \K^\times\times\widehat{U\mathfrak f_2}$,
we write 
$$
\GGamma{\varphi}{s,t}{u,v}:=
\frac{\Gamma_{\varphi}(s)\Gamma_{\varphi}(t)}{\Gamma_{\varphi}(u)\Gamma_{\varphi}(v)}
\in\K[[s,t,u,v]]
$$
where $\Gamma_{\varphi}$ is the associated gamma function  (cf. Definition \ref{defn: associated gamma function}).
We consider 
the associated $2\times 2$ matrix 
$$
M_{\varphi}
:=
\begin{pmatrix}
1 & 0 \\
-\frac{\aaa}{\qqq} & \frac{\ppp}{\bbb}
\end{pmatrix}
C_{\varphi}
\begin{pmatrix}
1 & -\frac{\bbb}{\ppp} \\
0 & \frac{\bbb}{\ppp}
\end{pmatrix}  \ \ 
\in \GL_2(\K[[\aaa,\bbb,\ccc-1]][\ppp^{-1},\qqq^{-1}])
$$ 
with
$\ppp=1-\ccc$, $\qqq=\aaa+\bbb+1-\ccc=\aaa+\bbb+\ppp$ and
$$
C_{\varphi}:=
\begin{pmatrix}
\GGamma{\varphi}{-\ppp,-\qqq}{-\ppp-\aaa,-\ppp-\bbb}
& 
\GGamma{\varphi}{\ppp,-\qqq}{-\aaa,-\bbb}
\\
\frac{\aaa\bbb}{\ppp\qqq}\GGamma{\varphi}{-\ppp,\qqq}{\aaa,\bbb}
&
\frac{(\aaa+\ppp)(\bbb+\ppp)}{\ppp\qqq}\GGamma{\varphi}{\ppp,\qqq}{\ppp+\aaa,\ppp+\bbb}
\end{pmatrix}.
$$
\end{nota}

\begin{prop}\label{prop: M in SL2}
We have
$M_\varphi\in \SL_2(\K[[\aaa,\bbb,\ccc-1]])$.
\end{prop}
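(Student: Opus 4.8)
The statement $M_\varphi \in \SL_2(\K[[\aaa,\bbb,\ccc-1]])$ has two separate assertions: that the entries of $M_\varphi$ actually lie in the power series ring $\PP=\K[[\aaa,\bbb,\ccc-1]]$ (no poles in $\ppp$ or $\qqq$), and that $\det M_\varphi = 1$. The determinant is the easy part: since conjugating the two outer matrices are lower/upper triangular with determinants $\frac{\ppp}{\bbb}$ and $\frac{\bbb}{\ppp}$, one gets $\det M_\varphi = \det C_\varphi$. Expanding $\det C_\varphi$ gives
\[
\GGamma{\varphi}{-\ppp,-\qqq}{-\ppp-\aaa,-\ppp-\bbb}\cdot\frac{(\aaa+\ppp)(\bbb+\ppp)}{\ppp\qqq}\GGamma{\varphi}{\ppp,\qqq}{\ppp+\aaa,\ppp+\bbb}
-\GGamma{\varphi}{\ppp,-\qqq}{-\aaa,-\bbb}\cdot\frac{\aaa\bbb}{\ppp\qqq}\GGamma{\varphi}{-\ppp,\qqq}{\aaa,\bbb},
\]
and I would simplify each product by cancelling the $\Gamma_\varphi$-factors. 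The key tool is the reflection-type identity satisfied by $\Gamma_\varphi$: from the definition $\Gamma_\varphi(t)=\exp\{\sum \frac{(-1)^{n+1}}{n}(\varphi|e_0^{n-1}e_1)t^n\}$ together with the $2$-cycle relation $\varphi(e_0,e_1)\varphi(e_1,e_0)=1$ (stated in the Remark after Definition~\ref{defn:associator}), one obtains a functional equation relating $\Gamma_\varphi(t)$, $\Gamma_\varphi(-t)$, and shift-by-integer behaviour of the form $\Gamma_\varphi(t+1) = (\text{something})\,\Gamma_\varphi(t)$ mirroring $\Gamma(t+1)=t\Gamma(t)$. Using such an identity, the first product collapses to something like $\GGamma{\varphi}{-\qqq,\qqq}{\aaa,\bbb}\cdot(\cdots)$ and the two terms of $\det C_\varphi$ combine into $1$.

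**Regularity of the entries.** For the claim that $M_\varphi$ has entries in $\PP$ rather than only in $\PP[\ppp^{-1},\qqq^{-1}]$, I would multiply out the triple product $\begin{pmatrix}1 & 0\\ -\aaa/\qqq & \ppp/\bbb\end{pmatrix}C_\varphi\begin{pmatrix}1 & -\bbb/\ppp\\ 0 & \bbb/\ppp\end{pmatrix}$ explicitly and track each entry. A priori the entries lie in $\PP[\ppp^{-1},\qqq^{-1}]$; I must show the apparent poles along $\ppp=0$ and $\qqq=0$ are removable. Note $\Gamma_\varphi(t) = 1 + O(t)$ with $(\varphi|e_1)=0$ (this follows from $\varphi\in\exp[\hat{\mathfrak f}_2,\hat{\mathfrak f}_2]$, which kills the degree-one part), so ratios like $\GGamma{\varphi}{-\ppp,\qqq}{\aaa,\bbb}$ are units in $\PP$. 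Then the dangerous factors are the explicit rational prefactors $\frac{\aaa\bbb}{\ppp\qqq}$ and $\frac{(\aaa+\ppp)(\bbb+\ppp)}{\ppp\qqq}$ in $C_\varphi$. When these are combined with the $\ppp/\bbb$ and $\bbb/\ppp$ coming from the flanking matrices and with the $1/\qqq$ in the left matrix, I expect the $\ppp$'s and $\qqq$'s to cancel against numerators in a way that precisely parallels the classical computation underlying the connection formulas \eqref{eqD}: indeed $M_\varphi$ is the $\ell$-adic shadow of the transition matrix between hypergeometric solution bases, which is known to be regular. So the strategy is to reduce, by the power-series-unit observation, to a purely rational identity in $\aaa,\bbb,\ppp,\qqq$ among the four $2\times 2$ matrices, and verify the pole cancellation there — a finite check.

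**Main obstacle.** The genuine difficulty is bookkeeping the $\Gamma_\varphi$-functional equation correctly: I must establish (or cite, if it is in \cite{EF1}) the precise shift identity $\Gamma_\varphi(t+n) = P_n(t)\,\Gamma_\varphi(t)$ and the reflection identity coming from the $2$-cycle relation, with the exact rational prefactors $P_n$, and then apply them to shifts by $\aaa$, $\bbb$, $\ppp$ through integer amounts — but here $\aaa,\bbb,\ppp$ are formal variables, \emph{not} integers, so the naive shift identity does not literally apply. The correct statement is likely that $\Gamma_\varphi$ satisfies $\Gamma_\varphi(t)\Gamma_\varphi(1-t) = (\text{explicit})$ only in a formal sense, or that the relevant cancellations in $\det C_\varphi$ are forced directly by the $2$-cycle relation $\Gamma_\varphi(t)\Gamma_\varphi(-t)=1$-type statement together with a subtler consequence of the pentagon equation. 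I would therefore first pin down exactly which functional equations of $\Gamma_\varphi$ hold for formal arguments, likely from \cite{EF1}, and only then carry out the determinant and regularity computations; getting this foundational identity right is where all the content sits, the rest being matrix algebra over $\PP$.
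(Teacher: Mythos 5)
Your skeleton (reduce to $\det C_\varphi$, use a reflection identity for $\Gamma_\varphi$, then handle regularity of the entries separately) is the right one, but the proposal leaves the two load-bearing steps unresolved, and the candidate identities you float would either not apply or give a wrong starting point. The identity that actually does the work is
$\Gamma_\varphi(t)\Gamma_\varphi(-t)=\frac{\mu t/2}{\sinh(\mu t/2)}$
(\cite{EF3}, Remark 4.6) — not a shift identity $\Gamma_\varphi(t+n)=P_n(t)\Gamma_\varphi(t)$ (which, as you note yourself, cannot apply to formal arguments and is in any case not needed), and not $\Gamma_\varphi(t)\Gamma_\varphi(-t)=1$. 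The point is that in $\det C_\varphi$ every $\Gamma_\varphi$-factor occurs paired with its counterpart at the negated argument, so this single identity converts the whole determinant into a rational expression in hyperbolic sines; one then needs the product-to-sum identity
$\sinh(\tfrac{\mu}{2}(\ppp+\aaa))\sinh(\tfrac{\mu}{2}(\ppp+\bbb))-\sinh(\tfrac{\mu}{2}\aaa)\sinh(\tfrac{\mu}{2}\bbb)=\sinh(\tfrac{\mu}{2}\ppp)\sinh(\tfrac{\mu}{2}\qqq)$
(using $\qqq=\aaa+\bbb+\ppp$) to conclude $\det C_\varphi=1$. Without pinning this down the determinant claim is not proved; you explicitly defer it, and your "$=1$-type" guess would shortcut to the answer only by accident, since the $\sinh$ corrections happen to cancel in the same pattern.

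The regularity argument as you describe it would fail. After multiplying out, the off-diagonal entries have the form $\frac{\bbb}{\ppp}\bigl\{\GGamma{\varphi}{\ppp,-\qqq}{-\aaa,-\bbb}-\GGamma{\varphi}{-\ppp,-\qqq}{-\ppp-\aaa,-\ppp-\bbb}\bigr\}$ and $\frac{\aaa}{\qqq}\bigl\{\GGamma{\varphi}{-\ppp,\qqq}{\aaa,\bbb}-\GGamma{\varphi}{-\ppp,-\qqq}{-\ppp-\aaa,-\ppp-\bbb}\bigr\}$. If you replace the $\Gamma_\varphi$-ratios by arbitrary units of $\K[[\aaa,\bbb,\ccc-1]]$, as your "reduce to a purely rational identity" step proposes, these expressions are \emph{not} regular: the cancellation is not rational. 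What makes the pole removable is that the two ratios inside each brace coincide upon setting $\ppp=0$ (resp. $\qqq=0$), because $\Gamma_\varphi(0)=1$, so each difference is divisible by $\ppp$ (resp. $\qqq$) in the power series ring. Finally, for the $(2,2)$ entry — the messiest one — the clean route is not a direct computation at all: once $\det M_\varphi=1$ and the other three entries are known to be regular, with $[M_\varphi]_{11}=\GGamma{\varphi}{-\ppp,-\qqq}{-\ppp-\aaa,-\ppp-\bbb}$ a unit, the relation $[M_\varphi]_{22}=(1+[M_\varphi]_{12}[M_\varphi]_{21})/[M_\varphi]_{11}$ forces $[M_\varphi]_{22}\in\K[[\aaa,\bbb,\ccc-1]]$. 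So the determinant computation should come first and then be used for regularity, rather than the two being independent checks.
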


\begin{proof}
The matrix $M_\varphi$ is calculated to be
{
\begin{equation*}
M_{\varphi}=
\begin{pmatrix}
\GGamma{\varphi}{-\ppp,-\qqq}{-\ppp-\aaa,-\ppp-\bbb}
&
\frac{\bbb}{\ppp}\left\{\GGamma{\varphi}{\ppp,-\qqq}{-\aaa,-\bbb}-\GGamma{\varphi}{-\ppp,-\qqq}{-\ppp-\aaa,-\ppp-\bbb}
\right\} \\
\frac{\aaa}{\qqq}\left\{\GGamma{\varphi}{-\ppp,\qqq}{\aaa,\bbb}-\GGamma{\varphi}{-\ppp,-\qqq}{-\ppp-\aaa,-\ppp-\bbb}\right\}
& [M_{\varphi}]_{22}
\end{pmatrix}
\end{equation*}
}
with 
\begin{align*}
[M_{\varphi}]_{22}=&
\frac{(\aaa+\ppp)(\bbb+\ppp)}{\ppp\qqq}\GGamma{\varphi}{\ppp,\qqq}{\ppp+\aaa,\ppp+\bbb} \\
&\qquad +\frac{\aaa\bbb}{\ppp\qqq}
\left\{\GGamma{\varphi}{-\ppp,-\qqq}{-\ppp-\aaa,-\ppp-\bbb}-\GGamma{\varphi}{-\ppp,\qqq}{\aaa,\bbb}
-\GGamma{\varphi}{\ppp,-\qqq}{-\aaa,-\bbb}
\right\}.
\end{align*}

It is 
clear that
the $(1,1)$ entry $[M_\varphi]_{11}$,
the $(1,2)$ entry $[M_\varphi]_{12}$ and
the $(2,1)$ entry $[M_\varphi]_{21}$  are in 
$\K[[\aaa,\bbb,\ccc-1]]$.
Thanks to the identity
$$
\Gamma_\varphi(t)\Gamma_\varphi(-t)=\frac{\mu t}{e^{\frac{\mu t}{2}}-e^{\frac{-\mu t}{2}}}=\frac{{\mu t}/2}{\sinh{{\mu t}/2}}.
$$
shown in \cite{EF3} Remark 4.6, we have
\begin{align*}
&\det M_\varphi=\det C_\varphi \\
\ &=\frac{\aaa\bbb+\ppp\qqq}{\ppp\qqq}\frac{\Gamma_\varphi(\ppp)\Gamma_\varphi(-\ppp)\Gamma_\varphi(\qqq)\Gamma_\varphi(\qqq)}{\Gamma_\varphi(\ppp+\aaa)\Gamma_\varphi(-\ppp-\aaa)\Gamma_\varphi(\ppp+\bbb)\Gamma_\varphi(-\ppp-\bbb)}
-\frac{\aaa\bbb}{\ppp\qqq}\frac{\Gamma_\varphi(\ppp)\Gamma_\varphi(-\ppp)\Gamma_\varphi(\qqq)\Gamma_\varphi(\qqq)}{\Gamma_\varphi(\aaa)\Gamma_\varphi(-\aaa)\Gamma_\varphi(\bbb)\Gamma_\varphi(-\bbb)} \\
&=\frac{\sinh(\frac{\mu}{2}(\ppp+\aaa))\sinh(\frac{\mu}{2}(\ppp+\bbb))}{\sinh(\frac{\mu}{2}\ppp)\sinh(\frac{\mu}{2}\qqq)}
-\frac{\sinh(\frac{\mu}{2}\aaa)\sinh(\frac{\mu}{2}\bbb)}{\sinh(\frac{\mu}{2}\ppp)\sinh(\frac{\mu}{2}\qqq)}\\
&=\frac{1}{\sinh(\frac{\mu}{2}\ppp)\sinh(\frac{\mu}{2}\qqq)}\cdot
\frac{e^\frac{\mu\ppp}{2}-e^{-\frac{\mu\ppp}{2}}}{2}\cdot
\frac{e^\frac{\mu\qqq}{2}-e^{-\frac{\mu\qqq}{2}}}{2}
=1.
\end{align*}
Thus, the $(2,2)$ entry $[M_\varphi]_{22}$  must also be in  $\K[[\aaa,\bbb,\ccc-1]]$.
Therefore, we have
$M_\varphi\in \SL_2(\K[[\aaa,\bbb,\ccc-1]])$.
\end{proof}

We denote by $H(X,-Y)$
the image of each element $H\in \K\langle\langle e_0,e_1 \rangle\rangle$
under the map
\begin{equation}\label{eq: ev(X,-Y)}
\ev_{(X,-Y)}: 
\K\langle\langle e_0,e_1 \rangle\rangle
\to \Mat_2(
\K[[\aaa,\bbb,\ccc-1]])
\end{equation}
sending $e_0$ and $e_1$ to $X$ and $-Y$ respectively.

\begin{thm}\label{thm: varphi=M}
For an associator  $(\mu,\varphi)\in \K^\times\times\widehat{U\mathfrak f_2}$, 
we have
$$
\varphi(X,-Y)=M_{\varphi}.
$$
Particularly the $(1,1)$ entry  proves the relation of Ohno-Zagier in \cite{OZ}
for associators.
\end{thm}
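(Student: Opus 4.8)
The plan is to evaluate the defining equations of an associator under the algebra homomorphism $\ev_{(X,-Y)}$ and show that $M_\varphi$ is the unique solution. First I would observe that since $\varphi\in\exp[\hat{\mathfrak f}_2,\hat{\mathfrak f}_2]$ is grouplike, its image $\varphi(X,-Y)$ lies in the corresponding prounipotent group inside $\GL_2(\PP)$; the $2$-cycle and $3$-cycle relations (recalled in the Remark after Definition~\ref{defn:associator}) then translate into matrix identities relating $\varphi(X,-Y)$, $\varphi(Y,X)$ and the exponentials $e^{X}$, $e^{Y}$, $e^{X_\infty}$ where $X_\infty=-X-Y$. The crucial structural point is that $X$, $Y$ and $X_\infty$ are each rank-one plus a semisimple part: $e^{tX}$, $e^{tY}$ are explicitly computable $2\times 2$ matrices whose entries are elementary in $\ppp,\qqq,\aaa,\bbb$ and in $e^{\mu\ppp t}$, $e^{\mu\qqq t}$ (here $\mu$ enters through the normalization $e^{\mu e_0}$, $e^{\mu e_1}$ appearing in the cocycle relations). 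Diagonalizing each of $X$, $Y$, $X_\infty$ and conjugating, the $3$-cycle relation becomes a functional equation for the single unknown power series, which one recognizes as the multiplicative functional equation characterizing $\Gamma_\varphi$ (Definition~\ref{defn: associated gamma function}) — this is exactly the mechanism by which Ohno--Zagier \cite{OZ} extracted the gamma quotient, and it is how $C_\varphi$ acquires its shape.

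Next I would carry out the identification in three steps. Step one: compute the image under $\ev_{(X,-Y)}$ of the \emph{formal} solution building block, namely reduce everything to the de Rham side by writing $\varphi(X,-Y)$ in the basis adapted to the eigenvectors of $X$ and of $Y$ (the two flags $\ker X$ and $\operatorname{im} X$, and likewise for $Y$), so that the conjugation matrices $\begin{pmatrix}1&0\\-\aaa/\qqq&\ppp/\bbb\end{pmatrix}$ and $\begin{pmatrix}1&-\bbb/\ppp\\0&\bbb/\ppp\end{pmatrix}$ appearing in Notation~\ref{notation for M} emerge as the change-of-basis matrices. Step two: in that adapted basis the $3$-cycle relation forces each diagonal entry of the ``core'' matrix to be a ratio of values of $\Gamma_\varphi$; the off-diagonal entries are then pinned down by grouplikeness together with the $2$-cycle relation $\varphi(X,-Y)\varphi(-Y,X)=1$, giving precisely the four entries of $C_\varphi$ (the prefactors $\aaa\bbb/\ppp\qqq$ and $(\aaa+\ppp)(\bbb+\ppp)/\ppp\qqq$ are dictated by matching the rank-one structure). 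Step three: conjugate back by the two fixed matrices to obtain $\varphi(X,-Y)=M_\varphi$; Proposition~\ref{prop: M in SL2} guarantees the result genuinely lands in $\SL_2(\K[[\aaa,\bbb,\ccc-1]])$, which is a consistency check but also handles the potential poles at $\ppp=0$, $\qqq=0$ that appear in the intermediate diagonalizations.

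For the final assertion, the $(1,1)$ entry: having identified $[\varphi(X,-Y)]_{11}=[M_\varphi]_{11}=\Gamma_\varphi\!\varia{-\ppp,-\qqq}{-\ppp-\aaa,-\ppp-\bbb}$ in the notation of Notation~\ref{notation for M}, I would match this against the Ohno--Zagier generating-series identity. Concretely, the $(1,1)$ entry of $\varphi(X,-Y)$ is, by definition of $\ev_{(X,-Y)}$, a specific $\K$-linear combination $\sum_W (\varphi\mid W)\,[W(X,-Y)]_{11}$ over words $W$ in $e_0,e_1$; computing $[W(X,-Y)]_{11}$ shows that only words of a restricted shape contribute and their contributions assemble into exactly the generating series whose closed form Ohno and Zagier computed (with multiple zeta values replaced by the coefficients $(\varphi\mid e_0^{n-1}e_1)$, i.e.\ by $\log\Gamma_\varphi$), so the equality of the $(1,1)$ entry with the stated gamma quotient \emph{is} the Ohno--Zagier relation transported to an arbitrary associator.

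The main obstacle I anticipate is Step two: controlling the off-diagonal entries of the core matrix. The $3$-cycle relation alone only constrains the diagonal part once one passes to eigenbases, and one must argue carefully — using grouplikeness plus the $2$-cycle relation, and possibly the explicit low-degree terms fixed by $(\varphi\mid e_0e_1)=\mu^2/24$ — that no ambiguity remains, in particular that the off-diagonal entries are forced to have the precise rational prefactors $\bbb/\ppp$, $\aaa/\qqq$ displayed in the formula for $M_\varphi$ in the proof of Proposition~\ref{prop: M in SL2}. A secondary technical nuisance is bookkeeping the $\mu$-dependence consistently through the cocycle/diagonalization relations so that it cancels in the final gamma quotient (as it must, since $M_\varphi$ is written without $\mu$), which the $\sinh$ identity $\Gamma_\varphi(t)\Gamma_\varphi(-t)=(\mu t/2)/\sinh(\mu t/2)$ from \cite{EF3} is designed to handle.
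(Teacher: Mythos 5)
Your central mechanism does not work, and it bypasses the paper's actual key input. You propose that after diagonalizing $X$, $Y$, $X_\infty$, the $2$- and $3$-cycle relations become a functional equation that \emph{forces} the entries of the core matrix to be quotients of $\Gamma_\varphi$-values. But the $3$-cycle relation is a single matrix identity relating three distinct unknowns $\ev_{(X,-Y)}(\varphi(e_0,e_1))$, $\ev_{(X,-Y)}(\varphi(e_1,e_\infty))$, $\ev_{(X,-Y)}(\varphi(e_\infty,e_0))$, and together with the $2$-cycle relation and grouplikeness (which for a $2\times 2$ matrix yields only $\det=1$) it is nowhere near determining them. More fundamentally, the cycle relations cannot pin $\varphi(X,-Y)$ to the specific series $\Gamma_\varphi$ of that same $\varphi$: the only constraint they impose on $\Gamma_\varphi$ is the reflection identity $\Gamma_\varphi(t)\Gamma_\varphi(-t)=\frac{\mu t/2}{\sinh(\mu t/2)}$ (which the paper uses only to get $\det M_\varphi=1$ in Proposition \ref{prop: M in SL2}), leaving the odd part of $\log\Gamma_\varphi$ --- equivalently the coefficients $(\varphi|e_0^{2k}e_1)$ --- completely free. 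What the theorem really requires is a reduction of the depth-$\geq 2$ coefficients $(\varphi|W)$ entering $[\varphi(X,-Y)]_{ij}$ (via Oi's generating functions $g_\varphi(k,n,s|\cdot)$) to the depth-one coefficients defining $\Gamma_\varphi$. That reduction \emph{is} the Ohno--Zagier relation, and it is not a formal consequence of the $2$- and $3$-cycle relations.

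Your third paragraph does gesture at the right bookkeeping for the $(1,1)$ entry (only admissible words contribute, and they assemble into the Ohno--Zagier generating series, per \cite{O} Lemma 3.1 and (65)), but then asserts the Ohno--Zagier closed form ``with multiple zeta values replaced by $(\varphi|e_0^{n-1}e_1)$'' without justification. This is exactly the missing step: the paper supplies it by citing Li \cite{L} (the regularized double shuffle relations imply Ohno--Zagier) together with \cite{F11} (the coefficients of any associator satisfy the regularized double shuffle relations) --- a deep input, independent of the hexagons, that your argument never invokes. The remaining entries are then obtained in the paper not by diagonalization but by explicit changes of variables in Oi's generating functions: the $(1,2)$ entry from \cite{O} (66), the $(2,1)$ entry from the duality $W\mapsto W^\ast$ of words (which is where the $2$-cycle relation and grouplikeness actually do enter), and the $(2,2)$ entry from $\det=1$. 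If you want a proof driven by ``defining relations of $\varphi$,'' it must route through the double shuffle relations, not through the hexagons.
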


\begin{proof}
We show the equation entry-wise.

{\it The $(1,1)$ entry:}
The main result of \cite{L} states that if the system 
$\{z(k_1,\dots,k_m)\in\C \bigm| m, k_1,\dots, k_{m-1}\geqslant 1, k_m>1  \}$ satisfies the regularized double shuffle relations  (cf. loc.~cit.),
the following Ohno-Zagier relation \cite{OZ} holds
\begin{align}\label{eq:Li formula}
1+ \aaa\bbb\sum_{\substack{k,n,s>0 \\k>n+s, \ n \geqslant s}}
&g_0(k,n,s)\ppp^{k-n-s}\qqq^{n-s}
(\aaa\bbb+\ppp\qqq)^{s-1}  \\
&=\exp[\sum_{n=2}^\infty\frac{z(n)}{n}\{\ppp^n+\qqq^n-(\aaa+\ppp)^n-(\bbb+\ppp)^n\}],  \label{eq:Li formula 2}
\end{align}
where
\begin{equation}\label{eq: g0}
g_0(k,n,s)=
\sum_{\substack{\wt({\mathbf k})=k, \ \mathrm{dp}({\mathbf k})=n, \ \mathrm{ht}({\mathbf k})=s \\ {\mathbf k}:\text{ admissible index}}} z({\mathbf k}).
\end{equation}
Here, an admissible index means a tuple ${\mathbf k}=(k_1,\dots,k_n)\in\N^n$ ($n\in\N$) 
with $k_n>1$, and we write
$\wt({\mathbf k})=k_1+\cdots+k_n$, 
$\mathrm{dp}({\mathbf k})=n$, 
$\mathrm{ht}({\mathbf k})=\sharp\{i\bigm| k_i>1\}$. 
In \cite{F11}, is is shown that the coefficients of any associator $\varphi$
with an appropriate signature, 
$\zeta_\varphi(k_1,\dots,k_m)=(-1)^m
(\varphi|e_0^{k_m-1}e_1\cdots e_0^{k_1-1}e_1)$ in precise,
satisfy the regularized double shuffle relations.
Since 
\eqref{eq:Li formula} for $\zeta_\varphi(k_1,\dots,k_m)$
is nothing but the $(1,1)$ entry $[\varphi(X,-Y)]_{11}$
by \cite{O} Lemma 3.1 and (65),
we obtain the equality 
$$
[\varphi(X,-Y)]_{11}=\GGamma{\varphi}{-\ppp,-\qqq}{-\ppp-\aaa,-\ppp-\bbb}
=[M_\varphi]_{11}.
$$

{\it The $(1,2)$ entry:}
By \cite{O} (66) and its following remark, we have
\begin{align*}
[\varphi(X,-Y)]_{11}+&\frac{\ppp}{\bbb}[\varphi(X,-Y)]_{12} \\
=
1+&(\aaa\bbb+\ppp\qqq)
\sum_{\substack{k,n,s>0 \\k>n+s, \ n \geqslant s}}
g_{\varphi}(k,n,s|{e_0{U\frak f_2}e_1})(-\ppp)^{k-n-s}\qqq^{n-s}
(\aaa\bbb)^{s-1} 
\end{align*}
where  $g_{\varphi}(k,n,s|{e_0{U\frak f_2}e_1})$ is defined analogously to 
\eqref{eq: g0} with $ z({\mathbf k})=\zeta_\varphi({\mathbf k})$.
Since the right-hand side of the above is obtained from 
\eqref{eq:Li formula 2} by the change of variables
$\aaa\mapsto \aaa+\ppp$,
$\bbb\mapsto \bbb+\ppp$,
$\ppp\mapsto -\ppp$,
$\qqq\mapsto\qqq$,
we have
$$
[\varphi(X,-Y)]_{11}+\frac{\ppp}{\bbb}[\varphi(X,-Y)]_{12}=
\GGamma{\varphi}{\ppp,-\qqq}{-\aaa,-\bbb}.
$$
Hence, we obtain the equality
\begin{equation}\label{eq:12}
[\varphi(X,-Y)]_{12}
=\frac{\bbb}{\ppp}\left\{\GGamma{\varphi}{\ppp,-\qqq}{-\aaa,-\bbb}
-\GGamma{\varphi}{-\ppp,-\qqq}{-\ppp-\aaa,-\ppp-\bbb}
\right\}
=[M_\varphi]_{12}.
\end{equation}

{\it The $(2,1)$ entry:}
By \cite{O} Lemma 3.1, we have
\begin{align}\label{eq:12aux}
[\varphi(X,-Y)]_{12}=&
\bbb\qqq\sum_{\substack{k,n,s>0 \\k>n+s, \ n \geqslant s}}
g_\varphi(k,n,s|{e_0{U\frak f_2}})\ppp^{k-n-s}\qqq^{n-s}
(\aaa\bbb+\ppp\qqq)^{s-1},  \\ \notag
[\varphi(X,-Y)]_{21}=&
\aaa\ppp\sum_{\substack{k,n,s>0 \\k>n+s, \ n \geqslant s}}
g_\varphi(k,n,s|{{U\frak f_2}e_1})\ppp^{k-n-s}\qqq^{n-s}
(\aaa\bbb+\ppp\qqq)^{s-1},  
\end{align}
where
\begin{align*}
g_\varphi(k,n,s|{e_0{U\frak f_2}})=&
\sum_{\substack{\wt(W)=k, \ \mathrm{dp}(W)=n, \ \mathrm{ht}(W)=s \\ W\in e_0{U\frak f_2}: \text{ word}}}
(-1)^{\mathrm{dp}(W)}(\varphi|W), \\
g_\varphi(k,n,s|{{U\frak f_2}e_1})=&
\sum_{\substack{\wt(W)=k, \ \mathrm{dp}(W)=n, \ \mathrm{ht}(W)=s \\ W\in {U\frak f_2}e_1: \text{ word}}}
(-1)^{\mathrm{dp}(W)}(\varphi|W).
\end{align*}
Here, for each word $W$,
$\wt(W)$ and $\mathrm{dp}(W)$ 
are defined to be the number of letters in $W$ and
the number of $e_1$ appearing in $W$, respectively.
And $\mathrm{ht}(W)-1$ is defined to be
the number of $e_1e_0$ appearing in $W$ (see \cite{O} Lemma 3.1).
By the 2-cycle relation and group-like condition for $\varphi$,
we have
$$
(-1)^{\mathrm{dp}(W)}(\varphi|W)=
(-1)^{\mathrm{dp}(W^\ast)}(\varphi|W^\ast)
$$
(for example, see \cite{F20+} Lemma 3.2).
Here, $W^\ast$ is the dual word of $W$, that is, the image of $W$ under the
anti-automorphism of  $U\frak f_2$ which switches $e_0$ and $e_1$.
Thus, we have
$$
g_\varphi(k,n,s|{{U\frak f_2}e_1})=
g_\varphi(k,k-n,s|{e_0{U\frak f_2}}).
$$
Hence,
\begin{align*}
[\varphi(X,-Y)]_{21}=&
\aaa\ppp\sum_{\substack{k,n,s>0 \\k>n+s, \ n \geqslant s}}
g_\varphi(k,k-n,s|{e_0{U\frak f_2}})\ppp^{k-n-s}\qqq^{n-s}
(\aaa\bbb+\ppp\qqq)^{s-1}   \\
=&
\aaa\ppp\sum_{\substack{k,m,s>0 \\k>m+s, \ m \geqslant s}}
g_\varphi(k,m,s|{e_0{U\frak f_2}})\ppp^{m-s}\qqq^{k-m-s}
(\aaa\bbb+\ppp\qqq)^{s-1}.
\intertext{By \eqref{eq:12} and \eqref{eq:12aux}, the change of variables 
$\aaa\mapsto -\aaa$, $\bbb\mapsto -\bbb$, $\ppp \mapsto \qqq$, $\qqq\mapsto \ppp$, we have		}
=&
\frac{\aaa}{\qqq}\left\{\GGamma{\varphi}{-\ppp,\qqq}{\aaa,\bbb}
-\GGamma{\varphi}{-\ppp,-\qqq}{-\ppp-\aaa,-\ppp-\bbb}
\right\}
=[M_\varphi]_{21}.
\end{align*}

{\it The $(2,2)$ entry:}
By Proposition \ref{prop: M in SL2},
$\det(M_\varphi)=1$.
While we have
$\det(\varphi(X,-Y))=1$
because $\varphi$  is commutator group-like.
These two claims assert that
$[\varphi(X,-Y)]_{22}=[M_\varphi]_{22}$.
\end{proof}


\begin{nota}\label{nota: M+}
We define the matrix
$$M_+\in\Mat_2(\K [[\aaa,\bbb,\ccc-1]])$$
in the same way as 
$M_\varphi$ in Notation \ref{notation for M}
whose $\Gamma_\varphi(t)$ is replaced with
\begin{align*}
\Gamma_+(t)&=\sqrt{\frac{t}{2}\mathrm{cosech}(\frac{t}{2})}
=\sqrt{\Gamma(1+\frac{t}{2\pi \sqrt{-1}})\Gamma(1-\frac{t}{2\pi \sqrt{-1}})}\\
&=\sqrt{\frac{t}{e^\frac{t}{2}-e^{-\frac{t}{2}}}}
=\exp\left\{-\sum_{n=1}^\infty \frac{B_{2n}}{2(2n)!}t^{2n}\right\}.
\end{align*}
Recall that the symbol $\Gamma$ means the classical gamma function in Remark \ref{rem:Gamma}. (1).
Namely, $M_+$ is given by 
\begin{equation*}\label{eq:M+0}
\begin{pmatrix}
1 & 0 \\
-\frac{\aaa}{\qqq} & \frac{\ppp}{\bbb}
\end{pmatrix}
C_{+}
\begin{pmatrix}
1 & -\frac{\bbb}{\ppp} \\
0 & \frac{\bbb}{\ppp}
\end{pmatrix}
\text{ with }
C_{+}:=
\begin{pmatrix}
\GGamma{+}{-\ppp,-\qqq}{-\ppp-\aaa,-\ppp-\bbb}
& 
\GGamma{+}{\ppp,-\qqq}{-\aaa,-\bbb}
\\
\frac{\aaa\bbb}{\ppp\qqq}\GGamma{+}{-\ppp,\qqq}{\aaa,\bbb}
&
\frac{(\aaa+\ppp)(\bbb+\ppp)}{\ppp\qqq}\GGamma{+}{\ppp,\qqq}{\ppp+\aaa,\ppp+\bbb}
\end{pmatrix}
\end{equation*}
and
$\GGamma{+}{s,t}{u,v}:=\frac{\Gamma_+(s)\Gamma_+(t)}{\Gamma_+(u)\Gamma_+(v)}$.
\end{nota}


%

\bigskip
\noindent
{\bf Proof of Theorem \ref{thm:even unitary associator=M+0}.}
For any even unitary associator $\varphi$, 
we have $\Gamma_{\varphi}(t)=\Gamma_+(t)$
by Remark \ref{rem:Gamma}.(2),
and thus,
\begin{equation}\label{eq: M+=M for even associator}
M_+=M_\varphi
\end{equation}
Therefore,  we see that our claim is a direct consequence of Theorem \ref{thm: varphi=M}.
\qed

\bigskip
We note that
\begin{equation}\label{eq: M+ in SL2}
M_+\in\SL_2(\Q[[\aaa,\bbb,\ccc-1]])
\end{equation}
by Proposition \ref{prop: M in SL2} and Theorem \ref{thm:even unitary associator=M+0}.

We consider the group homomorphism
$$\Theta:\hat F_2^{(\ell)}\to\GL_2(\Q_\ell [[\aaa,\bbb,\ccc-1]])$$
defined by the evaluation
$x_0\mapsto e^X$ and $x_1\mapsto M_{+}^{-1}e^{-Y}M_{+}$. 
By the map $\Theta$,
our $\ell$-adic hypergeometric function is 
reformulated as follows: 
\begin{prop}\label{prop: HG=Theta}
Definition \eqref{eq:HG and even associator}
is  free from any choice of  $\ell$-adic even unitary associator $\varphi$.
Furthermore the following equality holds:
\begin{equation}\label{eq:formal HG function}
\HG{\aaa,\bbb}{\ccc}{z}(\sigma)=[{\Theta}(f_\sigma^z)]_{11}
\in\Q_\ell[[\aaa,\bbb,\ccc-1]].
\end{equation}
\end{prop}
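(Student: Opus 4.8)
The plan is to unwind the definitions and reduce Proposition \ref{prop: HG=Theta} to Theorem \ref{thm:even unitary associator=M+0}. First I would recall that by definition \eqref{eq:HG and even associator},
$$
\HG{\aaa,\bbb}{\ccc}{z}(\sigma)=[G_{\vec{01}}^{\varphi}(X,-Y)(\sigma)(z)]_{(1,1)}
=[\ev_{(X,-Y)}\bigl(\iota_\varphi(f_\sigma^z)\bigr)]_{11},
$$
so the point is to identify the ring homomorphism $\ev_{(X,-Y)}\circ\iota_\varphi:\hat F_2^{(\ell)}\to\GL_2(\PP)$ with the homomorphism $\Theta$. Since both are group homomorphisms out of $\hat F_2^{(\ell)}$ (or its completion) and both are continuous, it suffices to check that they agree on the topological generators $x_0$ and $x_1$. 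By the definition of $\iota_\varphi$ in \eqref{eq:iota-varphi}, we have $\iota_\varphi(x_0)=e^{e_0}$ and $\iota_\varphi(x_1)=\varphi^{-1}e^{e_1}\varphi$, hence
$$
\ev_{(X,-Y)}\bigl(\iota_\varphi(x_0)\bigr)=e^{X},\qquad
\ev_{(X,-Y)}\bigl(\iota_\varphi(x_1)\bigr)=\varphi(X,-Y)^{-1}e^{-Y}\varphi(X,-Y),
$$
using that $\ev_{(X,-Y)}$ is a ring homomorphism sending $e_0\mapsto X$, $e_1\mapsto -Y$ and therefore $\varphi(e_0,e_1)\mapsto \varphi(X,-Y)$ and $e^{e_1}\mapsto e^{-Y}$.

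Now I would invoke the key theorem: since $\varphi$ is an even unitary associator, Theorem \ref{thm:even unitary associator=M+0} gives $\varphi(X,-Y)=M_+$. Substituting, $\ev_{(X,-Y)}\bigl(\iota_\varphi(x_1)\bigr)=M_+^{-1}e^{-Y}M_+$, which is exactly the value $\Theta(x_1)$, while $\ev_{(X,-Y)}\bigl(\iota_\varphi(x_0)\bigr)=e^X=\Theta(x_0)$. Because $M_+\in\SL_2(\Q[[\aaa,\bbb,\ccc-1]])$ by \eqref{eq: M+ in SL2}, the map $\Theta$ indeed lands in $\GL_2(\Q_\ell[[\aaa,\bbb,\ccc-1]])$ and the two homomorphisms agree on generators, hence coincide on all of $\hat F_2^{(\ell)}$. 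Therefore $G_{\vec{01}}^{\varphi}(X,-Y)(\sigma)(z)=\Theta(f_\sigma^z)$, and taking the $(1,1)$ entry yields \eqref{eq:formal HG function}.

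Independence from the choice of $\varphi$ is then immediate: the right-hand side $[\Theta(f_\sigma^z)]_{11}$ makes no reference to $\varphi$ at all — $\Theta$ is built purely from $M_+$, which is the fixed matrix of Notation \ref{nota: M+} — so any two even unitary associators give the same value of $\HG{\aaa,\bbb}{\ccc}{z}(\sigma)$. This simultaneously proves Theorem \ref{fundamental theorem of l-adic HG}(i) in the formal setting. I expect no real obstacle here; the only points requiring a little care are the compatibility of $\ev_{(X,-Y)}$ with the substitution into the formal series $\varphi$ and the exponential $e^{e_1}$ (a convergence/continuity remark, valid since $X$ and $Y$ are strictly upper/lower triangular up to the diagonal and everything is filtered by degree so that $\ev_{(X,-Y)}$ is well-defined on $\Q_\ell\langle\langle e_0,e_1\rangle\rangle$), and the verification that a continuous group homomorphism from $\hat F_2^{(\ell)}$ is determined by its values on $x_0,x_1$ — both of which are routine.
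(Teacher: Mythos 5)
Your proposal is correct and follows essentially the same route as the paper: the paper's proof likewise deduces $\Theta=\ev_{(X,-Y)}\circ\iota_\varphi$ from Theorem \ref{thm:even unitary associator=M+0} (by comparing the two continuous homomorphisms on the generators $x_0,x_1$, which is exactly your computation) and then observes that independence of $\varphi$ is immediate since $\Theta$ is defined purely in terms of $M_+$. Your additional remarks on the well-definedness of $\ev_{(X,-Y)}$ and on determination by generators are fine and only make explicit what the paper leaves implicit.
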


\begin{proof}
From Theorem \ref{thm:even unitary associator=M+0},
for even unitary associator $\varphi$,
we learn
\begin{equation}\label{eq:Theta=ev iota}
\Theta=\ev_{(X,-Y)}\circ \iota_{\varphi}
\end{equation}
where $\ev_{(X,-Y)}$ is the map defined in \eqref{eq: ev(X,-Y)}.
Whence it is evident that the definition is independent of $\varphi$.
\end{proof}

The following two propositions are required  to prove  the convergence: 

\begin{prop}\label{convergence for M (1)}
When $a,b,c\in\Z_\ell$ satisfy \eqref{eq:ABC condition},
the evaluation $M_{+,0}$ 
of the matrix $M_+$ at $(\aaa,\bbb,\ccc)=(a,b,c)$
makes sense in $\SL_2(\Q_\ell)$,
belongs to $\SL_2(\Z_\ell)$ and satisfies the congruence
$M_{+,0}\equiv I_2 \pmod \ell$.
\end{prop}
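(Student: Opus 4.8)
The plan is to make the entries of $M_+$ completely explicit, using the formula for $M_\varphi$ displayed in the proof of Proposition~\ref{prop: M in SL2} (with $\Gamma_\varphi$ replaced by $\Gamma_+$ and simplified via the evenness $\Gamma_+(-t)=\Gamma_+(t)$), and then to control everything $\ell$-adically through the von~Staudt--Clausen theorem. First I would record the arithmetic input for $\Gamma_+=\exp\{-\sum_{n\ge1}\frac{B_{2n}}{2(2n)!}t^{2n}\}$: von~Staudt--Clausen gives $v_\ell(B_{2n})\ge-1$, and Legendre's formula gives $v_\ell((2n)!)=\frac{2n-s_\ell(2n)}{\ell-1}\le n-1$ for the odd prime $\ell$, so (as $v_\ell(2)=0$) one gets $v_\ell\bigl(\frac{B_{2n}}{2(2n)!}\bigr)\ge -n$. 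Consequently, if $t$ is replaced by any integral linear form in $\aaa,\bbb,\ccc-1$ (in $M_+$ only $p,q,p\pm\aaa,p\pm\bbb,\aaa,\bbb$ occur), the degree‑$2n$ part of the exponent has coefficients of $\ell$‑valuation $\ge -n$; hence upon specializing at $(a,b,c)$ satisfying \eqref{eq:ABC condition} — where $a,b,p,q,p\pm a,p\pm b\in\ell\Z_\ell$ — that part lies in $\ell^{n}\Z_\ell$, the exponent converges into $\ell\Z_\ell$, and (since $\ell$ is odd) $\Gamma_+$ specializes into $1+\ell\Z_\ell$. In particular each ratio $\GGamma{+}{s,t}{u,v}$ appearing in $M_+$ specializes into $1+\ell\Z_\ell$.

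Next I would neutralize the apparent poles $\ppp^{-1},\qqq^{-1},\bbb^{-1}$. Put $L_{11}:=-\sum_n\frac{B_{2n}}{2(2n)!}\bigl(p^{2n}+q^{2n}-(p+a)^{2n}-(p+b)^{2n}\bigr)$ and $L':=-\sum_n\frac{B_{2n}}{2(2n)!}\bigl(p^{2n}+q^{2n}-a^{2n}-b^{2n}\bigr)$. Feeding $\Gamma_\varphi=\Gamma_+$ into the matrix displayed in the proof of Proposition~\ref{prop: M in SL2} and using evenness, one obtains
$$[M_+]_{11}=e^{L_{11}},\quad [M_+]_{12}=\bbb\qqq\,\widetilde H,\quad [M_+]_{21}=\aaa\ppp\,\widetilde H,\quad [M_+]_{22}=e^{L_{11}}-2\aaa\bbb\,\widetilde H,$$
where $\widetilde H:=e^{L_{11}}\cdot\frac{e^{L'-L_{11}}-1}{\ppp\qqq}$. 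The elementary point is that $L'-L_{11}=-\sum_n\frac{B_{2n}}{2(2n)!}\bigl((p+a)^{2n}-a^{2n}+(p+b)^{2n}-b^{2n}\bigr)$ is divisible by $\ppp\qqq$ in $\Z[[\aaa,\bbb,\ccc-1]]$: it manifestly vanishes on $\ppp=0$, and on $\qqq=0$ one has $p+a=-b$, $p+b=-a$, so each bracket telescopes to $0$ because the exponents are even. Writing $L'-L_{11}=\ppp\qqq\,N$, the degree‑$(2n-2)$ part of the power series $N$ again has coefficients of valuation $\ge -n$ (the $n=1$ term being the constant $-\tfrac{B_2}{2}$, as $(p+a)^2-a^2+(p+b)^2-b^2=2pq$). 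This makes $\widetilde H$ a genuine power series, in accordance with $M_+\in\SL_2(\Q[[\aaa,\bbb,\ccc-1]])$ from \eqref{eq: M+ in SL2}.

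Finally I would specialize at $(a,b,c)$. By the first step $[M_+]_{11}=e^{L_{11}}$ converges into $1+\ell\Z_\ell$. For $\widetilde H$: $N$ converges with $v_\ell(N(a,b,c))\ge -1$, and $e^{L'-L_{11}}=e^{\ppp\qqq N}$ converges into $1+\ell\Z_\ell$ because $v_\ell(pq)\ge 2$; expanding $\frac{e^{pqN}-1}{pq}=N+\sum_{k\ge2}\frac{(pq)^{k-1}N^{k}}{k!}$, each term with $k\ge2$ has $v_\ell\ge 2(k-1)-\tfrac{k-1}{2}-k\ge 0$, so $\widetilde H(a,b,c)\in\ell^{-1}\Z_\ell$. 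Therefore $[M_+]_{12}(a,b,c)=bq\widetilde H(a,b,c)$ and $[M_+]_{21}(a,b,c)=ap\widetilde H(a,b,c)$ have valuation $\ge 1+1-1=1$, while $[M_+]_{22}(a,b,c)=e^{L_{11}}(a,b,c)-2ab\widetilde H(a,b,c)\in 1+\ell\Z_\ell$. Thus $M_{+,0}$ converges, all its entries lie in $\Z_\ell$, the diagonal ones in $1+\ell\Z_\ell$ and the off‑diagonal ones in $\ell\Z_\ell$, i.e.\ $M_{+,0}\equiv I_2\pmod\ell$; and $\det M_{+,0}=1$ since specialization at $(a,b,c)$ is a ring homomorphism on the power series converging on $(\ell\Z_\ell)^3$ and $\det M_+=1$ by \eqref{eq: M+ in SL2}. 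Hence $M_{+,0}\in\SL_2(\Z_\ell)$.

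I expect the main obstacle to be precisely the tension between the unbounded $\ell$‑adic denominators of $\Gamma_+$ (the Bernoulli factors) and the apparent poles $\ppp^{-1},\qqq^{-1},\bbb^{-1}$ of $M_+$: the estimate $v_\ell\bigl(\frac{B_{2n}}{2(2n)!}\bigr)\ge -n$ is exactly what allows the factor $t^{2n}$ to dominate, and the $\ppp\qqq$‑divisibility of $L'-L_{11}$ forced by the evenness of $\Gamma_+$ is exactly what clears the poles — leaving one power of $\ell$ to spare, which is what yields the congruence $M_{+,0}\equiv I_2\pmod\ell$.
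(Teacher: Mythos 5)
Your proposal is correct, and it reaches the conclusion by a route that differs from the paper's in one substantive respect: how the apparent poles $\ppp^{-1},\qqq^{-1}$ are cleared before specializing. The paper argues entry-wise in the Tate-algebra formalism: it shows $\Gamma_+(\ell z)\in 1+\ell\Z_\ell[[z]]$ via the same von~Staudt--Clausen/factorial estimate you use, deduces that each ratio $\GGamma{+}{s,t}{u,v}$ (rescaled by $\ell$) lies in the Tate algebra $T_3$, and then, knowing abstractly from \eqref{eq: M+ in SL2} that the combination $\frac{\bbb}{\ppp}\{\cdots\}$ is a formal power series, invokes the Weierstrass division theorem to conclude it lies in $T_3$ and in $\ell\Z_\ell[[\aaa,\bbb,\ppp]]$; the $(2,2)$ entry is then handled through $\det M_+=1$. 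You instead make the pole-clearing explicit: using the evenness of $\Gamma_+$ you reduce all four entries to $e^{L_{11}}$ and the single series $\widetilde H$, prove the divisibility $\ppp\qqq\mid(L'-L_{11})$ by inspection of the homogeneous pieces, and then track valuations directly, showing $v_\ell(\widetilde H(a,b,c))\ge-1$ while the prefactors $\bbb\qqq$, $\aaa\ppp$, $2\aaa\bbb$ contribute $\ge 2$. This buys a self-contained, quantitative account of exactly where the integrality comes from (the paper's assertion that $m\in\ell\Z_\ell[[\aaa,\bbb,\ppp]]$ ``by \eqref{eq:valuation}'' is rather terse on precisely this point), at the cost of an explicit computation that the Tate-algebra argument sidesteps; the determinant step at the end is common to both.

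One small arithmetic slip in your write-up: the chain $v_\ell\ge 2(k-1)-\tfrac{k-1}{2}-k\ge 0$ fails literally at $k=2$, where the middle quantity equals $-\tfrac12$. The conclusion $v_\ell\ge 0$ is nevertheless correct there --- either because $v_\ell$ takes integer values on $\Q_\ell^\times$ so that $v_\ell>-1$ forces $v_\ell\ge0$, or more directly because $v_\ell(2!)=0$ for odd $\ell$, giving $v_\ell\bigl(\tfrac{(pq)N^2}{2}\bigr)\ge 2-2-0=0$. This does not affect the validity of the proof, but the inequality should be stated with the correct justification.
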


\begin{proof}
We show the claim entry-wise.

{\it The $(1,1)$ entry:}
By von Staudt-Clausen's theorem, $\ell B_{2n}\in\Z_\ell$ for all $n$.
Since we have $v_\ell(n!)<\frac{n}{\ell-1}$ for all $n$,
we have 
\begin{equation}\label{eq:valuation}
v_\ell(\frac{B_{2n}}{2(2n)!}\ell^{2n})> 2n\frac{\ell -2}{\ell -1}-1>0
\end{equation}
where $v_\ell$ is the standard $\ell$-adic valuation.
Therefore, $\Gamma_+(\ell z)$ is in the Tate algebra $T_1$, that is,
it is a rigid-analytic function on 
$|z|_\ell\leqslant 1$ (cf. \cite{BGR}).
Therefore, $\GGamma{+}{-\ell\ppp,-\ell\qqq}{-\ell\ppp-\ell\aaa,-\ell\ppp-\ell\bbb}$
is in the Tate algebra $T_3$ with respect to three variables
$\aaa$, $\bbb$, $\ppp$.
Hence, 
$[M_{+,0}]_{11}:=\GGamma{+}{-p,-q}{-p-a,-p-b}$ makes sense in $\Q_\ell$ when \eqref{eq:ABC condition} holds.
Furthermore, by \eqref{eq:valuation},
we have $\log\Gamma_+(\ell z)\in \ell\Z_\ell[[z]]$,
and whence $\Gamma_+(\ell z)\in 1+\ell\Z_\ell[[z]]$.
Thus,
$\GGamma{+}{-\ell\ppp,-\ell\qqq}{-\ell\ppp-\ell\aaa,-\ell\ppp-\ell\bbb}\in
1+\ell\Z_\ell[[\aaa,\bbb,\ppp]]$.
Thus, $[M_{+,0}]_{11}$ belongs to $1+\ell \Z_\ell$.

{\it The $(1,2)$ entry:}
The above arguments indicate that
$\GGamma{+}{\ell\ppp,-\ell\qqq}{-\ell\aaa,-\ell\bbb}$ and
$\GGamma{+}{-\ell\ppp,-\ell\qqq}{-\ell\ppp-\ell\aaa,-\ell\ppp-\ell\bbb}$
are in $T_3$.
Write
$m:=\frac{\bbb}{\ppp}\left\{\GGamma{+}{\ell\ppp,-\ell\qqq}{-\ell\aaa,-\ell\bbb}-\GGamma{+}{-\ell\ppp,-\ell\qqq}{-\ell\ppp-\ell\aaa,-\ell\ppp-\ell\bbb}
\right\}$.
By \eqref{eq: M+ in SL2},
$m$ is in $\Q_\ell[[\aaa,\bbb,\ccc-1]]$.
Hence, by the Weierstrass division theorem (cf. \cite{BGR}),
we see that $m$ is in  $T_3$.
Thus, the entry $[M_{+,0}]_{12}$ makes sense in $\Q_\ell$.
Furthermore, by \eqref{eq:valuation},
$m$ is also in $\ell\Z_\ell[[\aaa,\bbb,\ppp]]$.
Whence we learn that  $[M_{+,0}]_{12}\in \ell \Z_\ell$.

{\it The $(2,1)$ entry:}
By similar arguments as the above, we have
$[M_{+,0}]_{(2,1)}\in \ell \Z_\ell$.

{\it The $(2,2)$ entry:}
By \eqref{eq: M+ in SL2}, $\det(M_+)=1$.
Thus, we have $M_{+,0}\in \SL_2(\Q_\ell)$ with $[M_{+,0}]_{22}\in 1+\ell \Z_\ell$
because 
 we have shown that $[M_{+,0}]_{11}\in 1+\ell \Z_\ell$
and $[M_{+,0}]_{12}, [M_{+,0}]_{21}\in \ell \Z_\ell$.

\end{proof}

\begin{prop}\label{convergence for M (2)}
When $a,b,c\in\Z_\ell$ satisfy \eqref{eq:ABC condition},
there is 
a continuous group homomorphism
\begin{equation*}\label{eq:Theta0}
\Theta_0:\hat F_2^{(\ell)}\to\GL_2(\Z_\ell)
\end{equation*}
sending
$x_0\mapsto \exp(X_0)$ and $x_1\mapsto M_{+,0}^{-1}\exp(-Y_0)M_{+,0}$ 
with 
$$X_0=
\begin{pmatrix}
0 & b \\
0 & p
\end{pmatrix}, \
Y_0=
\begin{pmatrix}
0 & 0 \\
a & q
\end{pmatrix}\in \Mat_2(\Z_\ell),
$$
with $p=1-c$ and $q=a+b+1-c=a+b+p$.
\end{prop}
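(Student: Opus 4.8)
The plan is to build $\Theta_0$ as the $\ell$-adic specialization of the homomorphism $\Theta$ already constructed, and to verify that its image genuinely lands in $\GL_2(\Z_\ell)$ rather than merely in $\GL_2(\Q_\ell)$. First I would recall that, by Proposition \ref{convergence for M (1)}, under the hypothesis \eqref{eq:ABC condition} the matrix $M_{+,0}$ is a well-defined element of $\SL_2(\Z_\ell)$ with $M_{+,0}\equiv I_2 \pmod \ell$; in particular $M_{+,0}$ is invertible over $\Z_\ell$ and $M_{+,0}^{-1}\in\SL_2(\Z_\ell)$ with $M_{+,0}^{-1}\equiv I_2\pmod\ell$ as well. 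Next, since $X_0$ and $Y_0$ are $2\times 2$ nilpotent-plus-diagonal matrices with entries $a,b,p,q$ all lying in $\ell\Z_\ell$ (using $|a|_\ell,|b|_\ell<1$, $p=1-c$ with $|c-1|_\ell<1$, and $q=a+b+p$), both $X_0$ and $Y_0$ are matrices over $\ell\Z_\ell$; hence the exponential series $\exp(X_0)=\sum_{n\ge 0} X_0^n/n!$ converges $\ell$-adically (the denominators $n!$ have $v_\ell(n!)<n/(\ell-1)<n$, which is dominated by the valuation of $X_0^n$), and $\exp(X_0)\in I_2+\ell\,\Mat_2(\Z_\ell)$. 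The same holds for $\exp(-Y_0)$, and therefore for the conjugate $M_{+,0}^{-1}\exp(-Y_0)M_{+,0}$, which again lies in $I_2+\ell\,\Mat_2(\Z_\ell)\subset\GL_2(\Z_\ell)$.

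With these two matrices in hand I would define a homomorphism out of the free group $F_2$ on $x_0,x_1$ by $x_0\mapsto\exp(X_0)$, $x_1\mapsto M_{+,0}^{-1}\exp(-Y_0)M_{+,0}$, and then argue that it extends continuously to the pro-$\ell$ completion $\hat F_2^{(\ell)}$. The key point for this extension is that the images of $x_0$ and $x_1$ both lie in the subgroup $\Gamma_1:=I_2+\ell\,\Mat_2(\Z_\ell)$ of $\GL_2(\Z_\ell)$, which is a pro-$\ell$ group: indeed $\Gamma_1$ is the kernel of reduction $\GL_2(\Z_\ell)\to\GL_2(\Z/\ell)$, it is complete for the $\ell$-adic filtration $\Gamma_n=I_2+\ell^n\Mat_2(\Z_\ell)$, and each quotient $\Gamma_n/\Gamma_{n+1}$ is an elementary abelian $\ell$-group, so $\Gamma_1$ is a pro-$\ell$ group. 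By the universal property of pro-$\ell$ completion, any homomorphism from $F_2$ to a pro-$\ell$ group factors uniquely and continuously through $\hat F_2^{(\ell)}$; applying this to $F_2\to\Gamma_1\hookrightarrow\GL_2(\Z_\ell)$ yields the desired continuous $\Theta_0:\hat F_2^{(\ell)}\to\GL_2(\Z_\ell)$.

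Finally, I would record the compatibility with $\Theta$: the composite of $\Theta_0$ with the inclusion $\GL_2(\Z_\ell)\hookrightarrow\GL_2(\Q_\ell)$ agrees with the composite of $\Theta:\hat F_2^{(\ell)}\to\GL_2(\Q_\ell[[\aaa,\bbb,\ccc-1]])$ with the evaluation homomorphism at $(\aaa,\bbb,\ccc)=(a,b,c)$, because $e^X$ and $M_+^{-1}e^{-Y}M_+$ specialize to $e^{X_0}$ and $M_{+,0}^{-1}e^{-Y_0}M_{+,0}$ respectively; this is legitimate precisely because Proposition \ref{convergence for M (1)} guarantees that $M_+$ (and $M_+^{-1}$, which exists in $\SL_2$ over the same ring) can be evaluated at $(a,b,c)$.

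I expect the main obstacle to be the extension-to-$\hat F_2^{(\ell)}$ step, namely checking carefully that the target group through which the homomorphism factors is genuinely pro-$\ell$ and that the evaluation map is continuous for the relevant topologies — the integrality input from Proposition \ref{convergence for M (1)} is exactly what makes $I_2+\ell\,\Mat_2(\Z_\ell)$, rather than some larger non-pro-$\ell$ group, the receptacle. The convergence of $\exp(X_0)$, $\exp(-Y_0)$ and their compatibility with the formal series $e^X$, $e^{-Y}$ is routine given that $X_0,Y_0$ have entries in $\ell\Z_\ell$, so I would not dwell on it.
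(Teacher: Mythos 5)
Your argument is correct, and the decisive input you isolate --- the congruences $\exp(X_0)\equiv M_{+,0}^{-1}\exp(-Y_0)M_{+,0}\equiv I_2\pmod{\ell}$, coming from Proposition \ref{convergence for M (1)} together with $X_0,Y_0\in\ell\,\Mat_2(\Z_\ell)$ --- is exactly the hinge of the paper's proof as well. Where you genuinely differ is in the mechanism for passing from $F_2$ to $\hat F_2^{(\ell)}$: the paper composes the Magnus embedding $\hat F_2^{(\ell)}\hookrightarrow\Z_\ell\langle\langle u,v\rangle\rangle$, $x_0\mapsto 1+u$, $x_1\mapsto 1+v$ (citing Serre), with the continuous algebra homomorphism $\Z_\ell\langle\langle u,v\rangle\rangle\to\Mat_2(\Z_\ell)$ sending $u\mapsto\exp(X_0)-I_2$ and $v\mapsto M_{+,0}^{-1}\exp(-Y_0)M_{+,0}-I_2$, which is well defined and continuous precisely because these two matrices are topologically nilpotent, lying in $\ell\,\Mat_2(\Z_\ell)$. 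You instead note that the images of the generators land in the principal congruence subgroup $I_2+\ell\,\Mat_2(\Z_\ell)$, verify directly that this is a pro-$\ell$ group via the filtration $I_2+\ell^n\Mat_2(\Z_\ell)$, and invoke the universal property of the pro-$\ell$ completion. The two routes are equivalent in substance (the Magnus-type argument is one standard way to see that such congruence subgroups are pro-$\ell$), but yours makes the receiving pro-$\ell$ group explicit and is self-contained modulo the universal property, whereas the paper's is shorter given the reference; your explicit verification that $\exp(X_0)$ converges and lies in $I_2+\ell\,\Mat_2(\Z_\ell)$ fills in a step the paper leaves implicit. Your closing compatibility remark with $\Theta$ is not needed for the proposition itself, but it is exactly what is used immediately afterwards in the proof of Theorem \ref{fundamental theorem of l-adic HG}(ii), so it is worth recording.
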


\begin{proof}
There is a continuous inclusion, called the Magnus embedding,
$\hat F_2^{(\ell)}\hookrightarrow \Z_\ell\langle\langle u,v\rangle\rangle$
sending $x_0$ and $x_1$ to $1+u$ and $1+v$, respectively (\cite{Serre} Ch I.\S 1).
Our map $\Theta_0$ is obtained by composing it with 
the map $ \Z_\ell\langle\langle u,v\rangle\rangle\to \Mat_2(\Z_\ell)$
sending $u$ and $v$ to $\exp(X_0)-I_2$ and $M_{+,0}^{-1}\exp(-Y_0)M_{+,0}-I_2$.
By 
\begin{equation}\label{eq:cont bmod ell}
\exp(X_0)\equiv \exp(-Y_0)\equiv M_{+,0}\equiv I_2
\bmod \ell,
\end{equation}
$\Theta_0$ is well-defined and continuous.
\end{proof}


\bigskip
\noindent
{\bf Proof of Theorem \ref{fundamental theorem of l-adic HG}.}
Claim (i) follows from Proposition \ref{prop: HG=Theta}.
Claim (ii) is a consequence of  Proposition \ref{convergence for M (2)}
because we have
$$
\HG{a,b}{c}{z}(\sigma)=[{\Theta_0}(f_\sigma^z)]_{11}.
$$
\qed
\bigskip

Let $\sigma\in G_\Q$, which corresponds to 
$(\lambda_\sigma, f_\sigma)\in \GT(\Q_\ell)$ under \eqref{eq: GQ to GT}.
We define 
$\GGamma{\sigma}{s,t}{u,v}:=
\frac{\Gamma_{\sigma}(s)\Gamma_{\sigma}(t)}{\Gamma_{\sigma}(u)\Gamma_{\sigma}(v)}$
with $\Gamma_{\sigma}(t)=\Gamma_{f_\sigma}(t)$.
Then, a formal version of our $\ell$-adic Gauss hypergeometric theorem
is given as follows:

\begin{thm}
\label{thm: formal l-adic Gauss's hypergeometric theorem}
For $\sigma\in G_{\Q}$, the following equality holds in
$\Q_\ell[[\aaa,\bbb,\ccc-1]]$:
{\small
\begin{align}
&\HG{\aaa,\bbb}{\ccc}{\vec{10}}(\sigma)=
\frac{\ppp\qqq}{\aaa\bbb}\left\{\GGamma{+}{-\ppp,-\qqq}{-\ppp-\aaa,-\ppp-\bbb}+\GGamma{+}{\ppp,-\qqq}{-\aaa,-\bbb}\right\}\GGamma{+}{-\ppp,\qqq}{\aaa,\bbb}\cdot\GGamma{\sigma}{-\ppp,\qqq}{\aaa,\bbb}+ \\ \notag
&\left\{\frac{\aaa\bbb+\ppp\qqq}{\ppp\qqq}\GGamma{+}{\ppp,\qqq}{\ppp+\aaa,\ppp+\bbb}
-\frac{\aaa\bbb}{\ppp\qqq}\GGamma{+}{-\ppp,\qqq}{\aaa,\bbb}\right\}
\GGamma{+}{-\ppp,-\qqq}{-\ppp-\aaa,-\ppp-\bbb}\cdot
\GGamma{\sigma}{-\ppp,-\qqq}{-\ppp-\aaa,-\ppp-\bbb}.
\end{align}
}
\end{thm}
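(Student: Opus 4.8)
The plan is to evaluate the $(1,1)$ entry of $\Theta(f_\sigma^{\vec{10}})$ using Proposition \ref{prop: HG=Theta}, exploiting the cocycle structure of $f_\sigma^{\vec{10}}$ together with the explicit matrix $M_+$ from Theorem \ref{thm:even unitary associator=M+0}. First I would fix the straight path $\gamma:\vec{01}\leadsto\vec{10}$ and recall that, as in the complex case \eqref{eqB}, the solution based at $\vec{10}$ is obtained by the substitution $e_0\leftrightarrow e_1$, $z\mapsto 1-z$; at the level of $\ell$-adic cocycles this means the comparison $\comp_\varphi^{\vec{01}}$ applied to $f_\sigma^{\vec{10}}$ can be rewritten via $\comp_\varphi^{\vec{10}}$, and the discrepancy between the two base points is governed precisely by the associator $\varphi$ evaluated at the relevant generators. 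Concretely, the transition matrix between $G_{\vec{01}}^\varphi(X,-Y)(\sigma)(\vec{10})$ and $G_{\vec{10}}^\varphi(X,-Y)(\sigma)(\vec{10})$ is the constant matrix $\varphi(X,-Y)^{\pm1}=M_+^{\pm1}$ by Theorem \ref{thm:even unitary associator=M+0}, since $\varPhi_{\mathrm{KZ}}=G_{\vec{10}}(e_0,e_1)(z)^{-1}G_{\vec{01}}(e_0,e_1)(z)$ in the archimedean model and the same formal identity transports to the $\ell$-adic side after applying $\ev_{(X,-Y)}$.

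The key computational input is Oi's work \cite{O}: in the complex setting, writing the connection matrix between the fundamental solutions at $0$ and at $1$ in the basis of Kummer solutions \eqref{eqD}, the matrix $\V_{\vec{01}}(\vec{10})^{-1}\V_{\vec{10}}(\vec{10})$ is explicitly a product of gamma-factors (the classical connection formula for ${}_2F_1$). Transcribing this through the dictionary $\Gamma\rightsquigarrow\Gamma_\varphi$ for a general associator, and then specializing $\varphi$ to an even unitary associator so that $\Gamma_\varphi=\Gamma_+$ by Remark \ref{rem:Gamma}.(2), produces the matrix $M_+$; meanwhile the Galois part of $f_\sigma^{\vec{10}}$ — that is, the extra factor $\gamma^{-1}\sigma(\gamma)$ measured against the straight path — contributes exactly the $\Gamma_\sigma$-factors through Remark \ref{rem:Gamma}.(3)–(4) and the composition rule \eqref{eq: Gamma composition}. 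So I would (i) write $\Theta(f_\sigma^{\vec{10}})$ as a product of three matrices: a $\Gamma_+$-valued connection matrix coming from $M_+$ and its inverse, a $\Gamma_\sigma$-valued diagonal-type matrix coming from the Kummer 1-cocycle / Soulé-cocycle part of $\sigma$ acting on the tangential structure at $\vec{10}$, and another copy of $M_+^{\pm1}$; (ii) read off the $(1,1)$ entry of this product; (iii) simplify using $\det M_+=1$ (equation \eqref{eq: M+ in SL2}) and the explicit entries of $M_\varphi$ computed in the proof of Proposition \ref{prop: M in SL2}.

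Carrying out (ii)–(iii), the $(1,1)$ entry becomes a sum of two terms indexed by the two "exponents at $1$" (the local monodromy eigenvalues $1$ and $c-a-b$ of the hypergeometric equation at $z=1$), each of which is a product of one $\Gamma_\sigma$-ratio with a $\Gamma_+$-coefficient; matching the two $\Gamma_\sigma$-arguments $\GGamma{\sigma}{-\ppp,\qqq}{\aaa,\bbb}$ and $\GGamma{\sigma}{-\ppp,-\qqq}{-\ppp-\aaa,-\ppp-\bbb}$ against the classical connection coefficients pins down the $\Gamma_+$-prefactors, which are precisely the bracketed expressions in the statement — the first being $\frac{\ppp\qqq}{\aaa\bbb}\{[M_+]_{11}+[\text{shifted } (1,1)]\}$ and the second being $\{\frac{\aaa\bbb+\ppp\qqq}{\ppp\qqq}[C_+]_{22}-\frac{\aaa\bbb}{\ppp\qqq}[C_+]_{21}\ppp\qqq/(\aaa\bbb)\}[M_+]_{11}$, up to rearrangement using the entries of $C_+$ in Notation \ref{nota: M+}. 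The main obstacle I anticipate is bookkeeping: correctly identifying which tangential base point at $\vec{10}$ the Galois action twists, hence which Kummer cocycle ($\rho_{1-z}$ versus $\rho_z$) and which argument of $\Gamma_\sigma$ appears, and ensuring the two occurrences of $M_+^{\pm1}$ are inserted on the correct side so that the normalization matrices in \eqref{eqC}/\eqref{eq:PhiG01}–\eqref{eq:PhiG10} cancel to leave exactly the scalar ${}_2F_1$ in the $(1,0)$-row; the gamma-function algebra itself is then routine given the reflection identity $\Gamma_\varphi(t)\Gamma_\varphi(-t)=\frac{\mu t/2}{\sinh(\mu t/2)}$ already used in Proposition \ref{prop: M in SL2}.
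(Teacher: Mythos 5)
Your starting point is correct: by Proposition \ref{prop: HG=Theta} the task is to compute
$[\Theta(f_\sigma^{\vec{10}})]_{11}=[\ev_{(X,-Y)}(f_\sigma(e^{e_0},\varphi^{-1}e^{e_1}\varphi))]_{11}$,
and you correctly anticipate that the answer is assembled from $M_+$ and the composition rule \eqref{eq: Gamma composition}. The gap is in the central step, where you propose to write $\Theta(f_\sigma^{\vec{10}})$ as a three-factor product $M_+^{\pm1}\cdot(\text{diagonal-type }\Gamma_\sigma\text{-matrix})\cdot M_+^{\pm1}$. No such factorization with a diagonal middle factor is available, and nothing in your argument produces one: the $\sigma$-dependence of the answer enters through four \emph{distinct} multiplicative $\Gamma_\sigma$-corrections to the four entries of a $C_+$-type matrix (and the reflection formula $\Gamma_\sigma(t)\Gamma_\sigma(-t)=\frac{\lambda_\sigma t/2}{\sinh(\lambda_\sigma t/2)}\cdot\frac{\sinh(t/2)}{t/2}$ shows these corrections do not have the rank-one pattern that a diagonal sandwich would force). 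Relatedly, your identification of the transition matrix between $G^\varphi_{\vec{01}}(X,-Y)(\sigma)(\vec{10})$ and $G^\varphi_{\vec{10}}(X,-Y)(\sigma)(\vec{10})$ with $M_+^{\pm1}$ conflates the complex identity $\varPhi_{\mathrm{KZ}}=G_{\vec{10}}^{-1}G_{\vec{01}}$ with the $\ell$-adic situation: here $G^\varphi_{\vec{10}}(\sigma)(\vec{10})=1$, so that transition matrix is $\iota_\varphi(f_\sigma)$ itself, which is $\sigma$-dependent and is precisely the unknown quantity. (Also, the Kummer cocycles $\rho_z$, $\rho_{1-z}$ you spend effort tracking do not occur in this theorem at all, since $f_\sigma^{\vec{10}}=f_\sigma$ lies in the commutator subgroup.)

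The missing idea — and the paper's actual argument — is the torsor structure of Definition \ref{defn:associator}(3): the pair
$(\mu',\varphi'):=(\lambda_\sigma,f_\sigma)\circledast(1,\varphi)=\bigl(\lambda_\sigma,\ \varphi\cdot f_\sigma(e^{e_0},\varphi^{-1}e^{e_1}\varphi)\bigr)$
is again an associator, so $\iota_\varphi(f_\sigma)=\varphi^{-1}\varphi'$ and hence
$\Theta(f_\sigma)=\varphi(X,-Y)^{-1}\varphi'(X,-Y)=M_+^{-1}M_{\varphi'}$,
where the second equality uses the \emph{general} Theorem \ref{thm: varphi=M} applied to the (non-even) associator $\varphi'$, not merely Theorem \ref{thm:even unitary associator=M+0}. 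The $\Gamma_\sigma$-factors then enter through $\Gamma_{\varphi'}=\Gamma_\sigma\cdot\Gamma_+$ via \eqref{eq: Gamma composition}, and the stated formula is literally the $(1,1)$ entry of the two-factor product $M_+^{-1}M_{\varphi'}$; consistently, only the two $\Gamma_\sigma$-ratios occurring in the first column of $C_{\varphi'}$ survive. Without recognizing $\varphi\cdot\iota_\varphi(f_\sigma)$ as an associator you have no means of evaluating $\ev_{(X,-Y)}(\iota_\varphi(f_\sigma))$ in closed form: the classical ${}_2F_1$ connection formula transcribes only to series known to satisfy the associator (equivalently, regularized double shuffle) relations, which is exactly what the torsor property supplies for $\varphi'$.
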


\begin{proof}
Take any even unitary associator $\varphi$. 
Write
$(\mu',\varphi')=(\lambda_\sigma,f_\sigma)\circledast (1,\varphi)$, that is,
$$
(\mu',\varphi')
=\left(\lambda_\sigma,\ \varphi\cdot f_\sigma(e^{e_0},\varphi^{-1}e^{e_1}\varphi)\right).
$$
Then the  pair forms an associator
because $M(\Q_\ell)$ is a $\GT(\Q_\ell)$-torsor
(cf. Definition \ref{defn:associator}. (3)).
Thus, we have
\begin{align*}
\HG{{\aaa},{\bbb}}{{\ccc}}{\vec{10}}(\sigma)
&=\left[\Theta(f_\sigma)\right]_{11}
=\left[\ev_{(X,-Y)}(f_\sigma(e^{e_0},\varphi^{-1}e^{e_1}\varphi))\right]_{11} \\
&=\left[\ev_{(X,-Y)}(\varphi^{-1}\cdot\varphi')\right]_{11} 
=\left[\varphi(X,-Y)^{-1}\cdot\varphi'(X,-Y)\right]_{11} \\
&=\left[M_+^{-1}\cdot M_{\varphi'}\right]_{11}
\end{align*}
by Theorem \ref{thm: varphi=M} and 
\eqref{eq: M+=M for even associator}.
By calculating the $(1,1)$ entry of the  above matrix
with \eqref{eq: Gamma composition},
we obtain the claim.
\end{proof}

\smallskip
The $\ell$-adic Gauss hypergeometric theorem can be derived as follows:

\noindent
{\bf Proof of
Theorem \ref{thm: l-adic Gauss's hypergeometric theorem}.}
It is a consequence of
Proposition \ref{convergence for M (2)} and
Theorem \ref{thm: formal l-adic Gauss's hypergeometric theorem}.
\qed

\section{$\ell$-adic Euler transformation formula}
\label{sec:Euler's transformation formula}
In this section, 
we introduce two more series
in Definition \ref{def:two 2F1}
to relax 
our assumption
\eqref{eq:ABC condition} 
in Proposition \ref{prop:enlarge}
and to provide  a proof of the $\ell$-adic Euler transformation formula
(Theorem \ref{thm: l-adic Euler's transformation formula}).

In this section, we fix an even unitary associator $\varphi$.

\begin{prop}\label{prop:even associator free}
All six matrices $\V_{\ast}^{\varphi}(\sigma)(z)$
with $\ast=\vec{01}$, $\vec{10}$, $\vec{1\infty}$, $\vec{\infty 1}$, $\vec{\infty 0}$
and $\vec{0\infty}$
are independent of any choice of even unitary associators
$\varphi$.
\end{prop}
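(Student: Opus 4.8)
The plan is to reduce the independence of all six matrices to the independence of the single matrix $\V_{\vec{01}}^{\varphi}(\sigma)(z)$, and then deduce the latter from results already established. First I would observe that the definitions \eqref{eq:PhiG10}--\eqref{eq:PhiG0infty} express each of the other five matrices $\V_{\ast}^{\varphi}(\sigma)(z)$ in terms of $G_{\ast}^{\varphi}(X,-Y)(\sigma)(\cdot)$ times a constant matrix (independent of $\varphi$), and that each $G_{\ast}^{\varphi}$ is, by its very definition in \S\ref{sec:l-adic 6 solutions}, obtained from $G_{\vec{01}}^{\varphi}$ by the substitutions $(e_0,e_1)\mapsto(e_1,e_0)$, $(e_1,e_\infty)$, $(e_\infty,e_1)$, $(e_\infty,e_0)$, $(e_0,e_\infty)$ and a change of the argument $z$. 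Under $\ev_{(X,-Y)}$ these substitutions correspond to permuting the three matrices $X$, $-Y$, $e_\infty\mapsto -X+Y$ among the three slots; since the constant conjugating/transition matrices appearing in \eqref{eqC} are fixed, it suffices to show $\V_{\vec{01}}^{\varphi}(\sigma)(z)$ — equivalently $G_{\vec{01}}^{\varphi}(X,-Y)(\sigma)(z)=\ev_{(X,-Y)}\circ\iota_\varphi(f_\sigma^z)$ — does not depend on the choice of even unitary associator $\varphi$.

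For the $\vec{01}$ case I would invoke \eqref{eq:Theta=ev iota}, which was established in the proof of Proposition \ref{prop: HG=Theta}: for any even unitary associator $\varphi$ one has $\ev_{(X,-Y)}\circ\iota_\varphi=\Theta$, where $\Theta$ is the fixed homomorphism $x_0\mapsto e^X$, $x_1\mapsto M_+^{-1}e^{-Y}M_+$. Hence $G_{\vec{01}}^{\varphi}(X,-Y)(\sigma)(z)=\Theta(f_\sigma^z)$ is manifestly $\varphi$-free, and therefore so is $\V_{\vec{01}}^{\varphi}(\sigma)(z)$, being $\Theta(f_\sigma^z)$ times the constant matrix $\left(\begin{smallmatrix}1&1\\0&\ppp/\bbb\end{smallmatrix}\right)$. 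The key input here is Theorem \ref{thm:even unitary associator=M+0}, i.e. $\varphi(X,-Y)=M_+$ for every even unitary associator, which forces the image of $x_1=\varphi^{-1}e^{e_1}\varphi$ under $\ev_{(X,-Y)}$ to be the $\varphi$-independent $M_+^{-1}e^{-Y}M_+$.

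For the remaining five, the clean way is to note that the analogue of \eqref{eq:Theta=ev iota} holds with the roles of $(e_0,e_1)$ permuted: I must check that $\ev_{(X,-Y)}$ applied to $\varphi(e_1,e_0)$, $\varphi(e_1,e_\infty)$, $\varphi(e_\infty,e_1)$, $\varphi(e_\infty,e_0)$, $\varphi(e_0,e_\infty)$ are all $\varphi$-independent. Each of these is of the form $\varphi(e_i,e_j)$ for an ordered pair from $\{0,1,\infty\}$, and $\ev_{(X,-Y)}(\varphi(e_i,e_j))$ is $\varphi(E_i,E_j)$ where $(E_0,E_1,E_\infty)=(X,-Y,-X+Y)$. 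Since $\varphi$ is an even unitary associator, so is $\varphi(e_1,e_0)=\varphi(e_0,e_1)^{-1}$ (by the 2-cycle relation), and likewise the 2- and 3-cycle relations let me rewrite $\varphi(e_i,e_j)$ for any pair in terms of $\varphi(e_0,e_1)$, $\varphi(e_1,e_0)$ evaluated on the appropriate matrices together with explicit exponential factors built from $\mu=1$ and $E_0,E_1,E_\infty$; all of these are controlled by Theorem \ref{thm: varphi=M}, which gives $\varphi(E_i,E_j)=M_\varphi$ computed with the same gamma function $\Gamma_\varphi=\Gamma_+$ for every even unitary $\varphi$ (Remark \ref{rem:Gamma}.(2)), hence $\varphi$-free. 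The main obstacle I anticipate is purely bookkeeping: carefully matching the constant transition matrices in \eqref{eqC} against the 3-cycle relation so that, e.g., $\ev_{(X,-Y)}(\iota_\varphi(x_\infty))=\Ad(\varphi(e_0,e_\infty)e^{-e_0/2})^{-1}(e^{e_\infty})$ specializes to a $\varphi$-independent matrix; once one knows $\varphi(E_0,E_\infty)=M_\varphi$ is $\varphi$-free this is automatic, but writing it out for all five cases without sign or ordering errors is where the care is needed. I would therefore organize the proof as: (1) recall $\ev_{(X,-Y)}\circ\iota_\varphi=\Theta$ is $\varphi$-free; (2) observe $\varphi(E_i,E_j)$ is $\varphi$-free for all ordered triples by Theorem \ref{thm: varphi=M} plus $\Gamma_\varphi=\Gamma_+$; (3) conclude each $G_\ast^{\varphi}(X,-Y)(\sigma)(z)$, hence each $\V_\ast^{\varphi}(\sigma)(z)$, is $\varphi$-free since it is built from these $\varphi$-free matrices and $f_\sigma^z\in\hat F_2^{(\ell)}$ by continuous group homomorphisms and multiplication by constant matrices.
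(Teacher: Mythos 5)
Your handling of the cases $\vec{01}$, $\vec{10}$, $\vec{1\infty}$ and $\vec{0\infty}$ is essentially the paper's argument: the 2- and 3-cycle relations reduce the image of each generator to conjugates of $\exp(\pm X)$, $\exp(\pm Y)$ by $M_+$ and by exponentials of $X$, $Y$, and Theorem \ref{thm:even unitary associator=M+0} then gives $\varphi$-independence.

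The gap is in your step (2), which is exactly what is needed for the two remaining cases $\vec{\infty 1}$ and $\vec{\infty 0}$. You assert that $\ev_{(X,-Y)}(\varphi(e_i,e_j))=\varphi(E_i,E_j)$ is $\varphi$-free \emph{for all} ordered pairs ``by Theorem \ref{thm: varphi=M} plus $\Gamma_\varphi=\Gamma_+$'', and that the rest is bookkeeping. But Theorem \ref{thm: varphi=M} evaluates $\varphi$ only at the specific pair $(X,-Y)$; it says nothing about $\ev_{(X,-Y)}(\varphi(e_\infty,e_1))=\varphi(Y-X,-Y)$, which is precisely what the $\vec{\infty1}$ and $\vec{\infty0}$ cases require. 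The cycle relations do not eliminate it: rewriting $\varphi(e_\infty,e_1)^{-1}e^{e_1}\varphi(e_\infty,e_1)$ via the 2- and 3-cycle relations leaves an outer conjugation by $\varphi(e_\infty,e_0)$, so a non-$(e_0,e_1)$ evaluation of $\varphi$ survives. Nor can you reduce $(Y-X,-Y)$ to a pair of the standard triangular shape by a $\varphi$-independent conjugation and a reparametrization of $(\aaa,\bbb,\ccc)$: the eigenvalues of $Y-X$ are $\aaa$ and $\bbb$, while any matrix of the form $\left(\begin{smallmatrix}0&\bbb'\\0&\ppp'\end{smallmatrix}\right)$ has $0$ as an eigenvalue, so generically no such conjugation exists. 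This is why the paper introduces $N_+:=\ev_{(X,-Y)}(\varphi(e_\infty,e_1))$ and devotes Appendix A (Lemma \ref{lemma in appendix}) to proving its independence of the even unitary associator; that proof is not bookkeeping but a separate argument, transferring the explicit Kummer-solution identities from the complex case to an arbitrary commutator group-like series via the iterated-integral/shuffle formalism of Proposition \ref{three compatibilities}, so as to express the entries of $\varphi(Y-X,-Y)$ in terms of those of $\varphi(X,-Y)=M_+$ after the parameter substitution $\iota$. Without this lemma (or an equivalent) your proof does not cover $\V_{\vec{\infty 1}}^{\varphi}(\sigma)(z)$ and $\V_{\vec{\infty 0}}^{\varphi}(\sigma)(z)$.
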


\begin{proof}
\begin{enumerate}
\renewcommand{\labelenumi}{(\roman{enumi}).}
\item
The claim for $\V_{\vec{01}}(\sigma)(z)$
is a consequence of 
\begin{equation}\label{eq:G01=Theta}
G_{\vec{01}}^{\varphi}(X,-Y)(\sigma)(z)=
\Theta(f_\sigma^z) 
\end{equation}
which follows from \eqref{eq:Theta=ev iota}.
%
\item
The claim for $\V_{\vec{10}}^{\varphi}(\sigma)(z)$
follows from
\begin{align}\notag
G_{\vec{10}}^{\varphi}(X,-Y)(\sigma)(z)
&=\ev_{(X,-Y)}\left(G_{\vec{10}}^{\varphi}(e_0,e_1)(\sigma)({z})\right) 
=\ev_{(X,-Y)}\left(G_{\vec{01}}^{\varphi}(e_1,e_0)(\sigma)(1-{z}) \right)\\ \notag
&=\ev_{(X,-Y)}(f_{\sigma}^{1-z}\left(\exp(e_1),\varphi(e_1,e_0)^{-1}\exp(e_0)\varphi(e_1,e_0)\right).  \notag
\intertext{By 2-cycle relation for $\varphi$,} \notag
&=\ev_{(X,-Y)}(f_{\sigma}^{1-z}\left(\exp(e_1),\varphi(e_0,e_1)\exp(e_0)\varphi(e_0,e_1)^{-1}\right) \\
\label{eq:G10=f}
&=f_{\sigma}^{1-z}\left(\exp(-Y),M_+\exp(X)M_+^{-1}\right).
\end{align}
where in the last equality we use
Theorem \ref{thm:even unitary associator=M+0}.
%
\item
The claim for $\V_{\vec{1\infty}}^{\varphi}(\sigma)(z)$ follows from
the following equality:
\begin{align*} \notag
G_{\vec{1\infty}}^{\varphi}&(X,-Y)(\sigma)(z)
=\ev_{(X,-Y)}\left(G_{\vec{1\infty}}^{\varphi}(e_0,e_1)(\sigma)({z})\right) 
=\ev_{(X,-Y)}\left(G_{\vec{01}}^{\varphi}(e_1,e_\infty)(\sigma)(1-\frac{1}{z}) \right)\\ \notag
&=\ev_{(X,-Y)}\left(f_{\sigma}^{1-\frac{1}{z}}\left(\exp(e_1),\varphi(e_1,e_\infty)^{-1}\exp(e_\infty)\varphi(e_1,e_\infty)\right)\right).  \notag
\intertext{By 2- and 3-cycle relations for $\varphi$,} 
&=\ev_{(X,-Y)}\left(f_{\sigma}^{1-\frac{1}{z}}\left(\exp(e_1),\exp(-\frac{e_1}{2})\varphi(e_0,e_1)\exp(-e_0)\varphi(e_0,e_1)^{-1}\exp(-\frac{e_1}{2})
\right)\right) \\ \notag
&=f_{\sigma}^{1-\frac{1}{z}}\left(\exp(-Y),
\exp(Y/2)M_+\exp(-X)M_+^{-1}\exp(Y/2)\right).
\end{align*}
%
\item
The claim for $\V_{\vec{0\infty}}^{\varphi}(\sigma)(z)$ follows from
the following equality:
\begin{align} \notag
G_{\vec{0\infty}}^{\varphi}&(X,-Y)(\sigma)(z)
=\ev_{(X,-Y)}\left(G_{\vec{0\infty}}^{\varphi}(e_0,e_1)(\sigma)({z})\right) 
=\ev_{(X,-Y)}\left(G_{\vec{01}}^{\varphi}(e_0,e_\infty)(\sigma)({z}/(z-1))\right)
 \\ \notag
&=\ev_{(X,-Y)}\left(f_{\sigma}^{z/(z-1)}(\exp(e_0),\varphi(e_0,e_\infty)^{-1}\exp(e_\infty)\varphi(e_0,e_\infty))\right). \notag
\intertext{By 2- and 3-cycle relations for $\varphi$,} \notag
&=\ev_{(X,-Y)}\left(f_{\sigma}^{z/(z-1)}(\exp(e_0),
\exp(-e_0/2)\varphi(e_0,e_1)^{-1}\exp(-e_1)\varphi(e_0,e_1)\exp(-e_0/2)
)\right)  \\  \label{eq:G0infty=f}
&=f_{\sigma}^{z/(z-1)}\left(\exp(X),\exp(-X/2)M_+^{-1}\exp(Y)M_+\exp(-X/2)\right).
\end{align}
%
%
\item
The claim for $\V_{\vec{\infty 1}}^{\varphi}(\sigma)(z)$ follows from
the following equality:
\begin{align} \notag
G_{\vec{\infty 1}}^{\varphi}&(X,-Y)(\sigma)(z)
=\ev_{(X,-Y)}\left(G_{\vec{\infty 1}}^{\varphi}(e_0,e_1)(\sigma)({z})\right) 
=\ev_{(X,-Y)}\left(G_{\vec{01}}^{\varphi}(e_\infty,e_1)(\sigma)(\frac{1}{z})\right)
 \\ \notag
&=\ev_{(X,-Y)}\left(f_{\sigma}^{1/z}(\exp(e_\infty), \varphi(e_\infty,e_1)^{-1}\exp(e_1)\varphi(e_\infty,e_1))\right) \\ \notag
&=f_{\sigma}^{1/z}\left(\exp(Y-X),N_+^{-1}\exp(-X)N_+\right) \notag
\end{align}
with $N_+:=\ev_{(X,-Y)}(\varphi(e_\infty,e_1))$,
which is shown to be free from any choice of even unitary associator $\varphi$
in Lemma \ref{lemma in appendix}.
%
\item
The claim for $\V_{\vec{\infty 0}}^{\varphi}(\sigma)(z)$  follows from
the following equality:
\begin{align} \notag
G_{\vec{\infty 0}}^{\varphi}&(X,-Y)(\sigma)(z)
=\ev_{(X,-Y)}\left(G_{\vec{\infty 0}}^{\varphi}(e_0,e_1)(\sigma)({z})\right) 
=\ev_{(X,-Y)}\left(G_{\vec{01}}^{\varphi}(e_\infty,e_0)(\sigma)(\frac{1}{1-z})\right)
 \\ \notag
&=\ev_{(X,-Y)}\left(f_{\sigma}^{1/(1-z)}(\exp(e_\infty), \varphi(e_\infty,e_0)^{-1}\exp(e_0)\varphi(e_\infty,e_0))\right).  \notag
\intertext{By 2- and 3-cycle relations for $\varphi$,} \notag
&=\ev_{(X,-Y)}\left(f_{\sigma}^{1/(1-z)}(\exp(e_\infty),
\exp(-\frac{e_\infty}{2}) \varphi(e_\infty,e_1)^{-1}\exp(-e_1)\varphi(e_\infty,e_1)
\exp(-\frac{e_\infty}{2})
)\right) \\ \notag
&=f_{\sigma}^{1/(1-z)}\left(\exp(Y-X),\exp(-\frac{Y-X}{2})N_+^{-1}\exp(Y)N_+\exp(-\frac{Y-X}{2})\right). \notag
\end{align}
\end{enumerate}
\end{proof}

Henceforth, we drop the upper suffix  $\varphi$
in $\V_{\ast}^{\varphi}(\sigma)(z)$.
By \eqref{eq:PhiG01}, \eqref{eq:formal HG function},
\eqref{eq:G01=Theta} and
$G^\varphi_{\vec{01}}(X,-Y)(\sigma)(z)\in \GL_2(\PP)$,
we see that the $(1,1)$ entry
$[\V_{\vec{01}}(\sigma)(z)]_{11}$ is given by
\begin{equation}\label{eq:Phi01}
[\V_{\vec{01}}(\sigma)(z)]_{11}=\HG{\aaa,\bbb}{\ccc}{z}(\sigma)
\in \PP=\Q_\ell[[\aaa,\bbb,\ccc-1]].
\end{equation}

Recall that in the complex case (\S \ref{sec:complex case}), we
have the following relationship between the $(1,1)$ entry of the matrix constructed from the fundamental solution of the KZ-equation and the hypergeometric function
\begin{align*}
[\V_{\vec{10}}(z)]_{11}
&=\HG{a,b}{a+b+1-c}{1-z}, \\
[\V_{\vec{0\infty}}(z)]_{11}
&=(1-z)^{-a}\cdot
\HG{a,c-b}{c}{\frac{z}{z-1}},
\end{align*}
which indicates that we should introduce the following series in our $\ell$-adic setting:

\begin{defn}\label{def:two 2F1}
We consider two series
\begin{align}\label{eq:2F1prime}
&{\HGdag{\aaa',\bbb'}{\ccc'}{z}}(\sigma)\in {\ccc'}^{-1}{\Q}_\ell[[{\aaa'},{\bbb'},{\ccc'}]], 
\\ \label{eq:2F1primeprime}
&{\HGddag{\aaa'',\bbb''}{\ccc'' }{z}}(\sigma)\in \Q_\ell[[\aaa''-1,\bbb'',\ccc''-1]]
\end{align}
which are determined by $\ell$-adic analogues of
the above two equalities:
\begin{align}
\label{eq:Phi10}
[\V_{\vec{10}}(\sigma)(z)]_{11}
&=:\HGdag{\aaa,\bbb}{\aaa+\bbb+1-\ccc}{1-z}(\sigma)
\in \qqq^{-1}\PP
,\\
\label{eq:Phi0infty}
[\V_{\vec{0\infty}}(\sigma)(z)]_{11}
&=:
\exp\left\{-\rho_{1-z}(\sigma)\aaa\right\}\cdot
\HGddag{\aaa,\ccc-\bbb}{\ccc}{\frac{z}{z-1}}(\sigma)
\in \PP.
\end{align}
\end{defn}
We stress that 
by Proposition \ref{prop:even associator free},
the two series are independent of any choice of even unitary associators $\varphi$.

We note that the relationship of
the  three series
$\HG{\aaa,\bbb}{\ccc}{z}(\sigma)\in \Q_\ell[[\aaa,\bbb,\ccc-1]]$,
$\HGdag{\aaa',\bbb'}{\ccc'}{z}(\sigma)\in {\ccc'}^{-1}{\Q}_\ell[[{\aaa'},{\bbb'},{\ccc'}]]$, 
$\HGddag{\aaa'',\bbb''}{\ccc'' }{z}(\sigma)\in
\Q_\ell[[\aaa''-1,\bbb'',\ccc''-1]]$
with the other three solutions
$\V_{\vec{\infty 1}}(\sigma)(z)$,
$\V_{\vec{1\infty}}(\sigma)(z)$,
$\V_{\vec{\infty 0}}(\sigma)(z)$
is given as follows:

\begin{prop}\label{three compatibilities}
%
%
For an even unitary associator $\varphi$,
we have the following equalities: 
\begin{align}
\label{eq:Phiinfty1}
[\V_{\vec{\infty 1}}(\sigma)(z)]_{11}
&=\exp\left\{-\rho_{z}(\sigma)\aaa\right\}\cdot
\HG{\aaa+1-\ccc,\aaa}{\aaa-\bbb+1}{\frac{1}{z}}(\sigma)
\text{ in } \PP
,\\
\label{eq:Phi1infty}
[\V_{\vec{1\infty}}(\sigma)(z)]_{11}
&=\exp\left\{-\rho_{z}(\sigma)\aaa\right\}\cdot
\HGdag{\aaa,\aaa+1-\ccc}{\aaa+\bbb+1-\ccc}{1-\frac{1}{z}}(\sigma)
\text{ in } \qqq^{-1}\PP,  \\
\label{eq:Phiinfty0}
[\V_{\vec{\infty 0}}(\sigma)(z)]_{11}
&=\exp\left\{-\rho_{1-z}(\sigma)\aaa\right\}\cdot
\HGddag{\aaa,\ccc-\bbb}{\aaa-\bbb+1}{\frac{1}{1-z}}(\sigma)
\text{ in } \PP.
\end{align}
\end{prop}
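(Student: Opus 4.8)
The plan is to derive the three identities in exactly the same manner as the analogous complex identities \eqref{eqD}, exploiting the explicit formulas for the matrices $G_{\vec\ast}^\varphi(X,-Y)(\sigma)(z)$ obtained in the proof of Proposition \ref{prop:even associator free}, together with the relation $G_{\vec{01}}^\varphi(X,-Y)(\sigma)(z)=\Theta(f_\sigma^z)$ from \eqref{eq:G01=Theta} and the definitions of $\HG{}{}{}$, $\HGdag{}{}{}$, $\HGddag{}{}{}$. The key observation is that each of the six $\ell$-adic solutions $G_{\vec\ast}^\varphi(X,-Y)(\sigma)(z)$ is expressed, via the cocycle formulas in Proposition \ref{prop:even associator free}, as $f_\sigma^{w}$ evaluated at a conjugated pair of matrices; comparing these conjugations against the ones appearing in the definitions of $\V_{\vec{01}}$, $\V_{\vec{10}}$, $\V_{\vec{0\infty}}$ will let me reduce the three claimed identities to identities among the $\V_{\vec\ast}$ themselves, which are in turn the $\ell$-adic shadows of Oi's relations in \eqref{eqD}.

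First I would treat \eqref{eq:Phiinfty1}. Using the formula for $G_{\vec{\infty 1}}^\varphi(X,-Y)(\sigma)(z)$ from part (v) of the proof of Proposition \ref{prop:even associator free}, namely $f_\sigma^{1/z}(\exp(Y-X), N_+^{-1}\exp(-X)N_+)$, I would rewrite the substitution $(e_0,e_1)\mapsto(e_\infty,e_1)$ at the level of the matrix pair $(X,-Y)$ and recognize that the pair $(Y-X, -(\text{conjugate of }X))$ corresponds to the hypergeometric-equation matrices with the parameter change $(\aaa,\bbb,\ccc)\mapsto(\aaa+1-\ccc,\aaa,\aaa-\bbb+1)$ and the variable change $z\mapsto 1/z$, exactly matching the complex case $\begin{pmatrix}1&0\end{pmatrix}\cdot\V_{\vec{\infty1}}(z)$ in \eqref{eqD}. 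The Kummer cocycle factor $\exp\{-\rho_z(\sigma)\aaa\}$ arises precisely because, in the $\ell$-adic setting, the analytic prefactor $z^{-a}$ of the complex solution is replaced by the Galois avatar $\exp\{-\rho_z(\sigma)\aaa\}$: concretely, the tangential-basepoint normalization of $G_{\vec{\infty 1}}$ introduces a factor $x_0^{\rho_z(\sigma)}$ in the cocycle, which under $\ev_{(X,-Y)}\circ\iota_\varphi$ becomes $\exp(\rho_z(\sigma)X)$, and its $(1,1)$-block contribution is $\exp(-\rho_z(\sigma)\aaa)$ after the sign conventions are sorted out. I would then handle \eqref{eq:Phi1infty} and \eqref{eq:Phiinfty0} identically, using parts (iii) and (vi) of the proof of Proposition \ref{prop:even associator free}: for \eqref{eq:Phi1infty}, the pair $(\exp(-Y), \exp(Y/2)M_+\exp(-X)M_+^{-1}\exp(Y/2))$ matches $\V_{\vec{1\infty}}$ with $z\mapsto 1-1/z$ and the daggered parameters, and the prefactor $z^{-a}$ (complex) again becomes $\exp\{-\rho_z(\sigma)\aaa\}$; for \eqref{eq:Phiinfty0}, the pair involves $N_+$ and $\exp(\pm(Y-X)/2)$, matching $\V_{\vec{\infty0}}$ with $z\mapsto 1/(1-z)$, the double-daggered parameters, and prefactor $\exp\{-\rho_{1-z}(\sigma)\aaa\}$ coming from $(1-z)^{-a}$.

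The technical core in each case is to match the right-hand transition matrices in \eqref{eq:PhiGinfty1}, \eqref{eq:PhiG1infty}, \eqref{eq:PhiGinfty0} — i.e.\ the fixed $2\times2$ matrices $\begin{pmatrix}1&1\\-\aaa/\bbb&-1\end{pmatrix}$, $\begin{pmatrix}1&0\\-\aaa/\qqq&(1-\qqq)/\bbb\end{pmatrix}$, etc.\ — against the matrices $\begin{pmatrix}1&1\\0&\ppp/\bbb\end{pmatrix}$, $\begin{pmatrix}1&0\\-\aaa/\qqq&(\qqq-1)/\bbb\end{pmatrix}$ appearing in $\V_{\vec{01}}$ and $\V_{\vec{10}}$ after the parameter substitutions, so that taking the $(1,1)$ entry of $\V_{\vec\ast}(\sigma)(z)$ reproduces the $(1,1)$ entry of $\V_{\vec{01}}$ or $\V_{\vec{10}}$ at the transformed arguments — which is by definition $\HG{}{}{}$ or $\HGdag{}{}{}$ at those arguments. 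This is exactly Oi's bookkeeping in \cite{O} transported to the $\ell$-adic side, and since $M_+=\varphi(X,-Y)$ by Theorem \ref{thm:even unitary associator=M+0}, all the needed relations among $M_+$, $\exp(X)$, $\exp(Y)$, $N_+$ are just the $\ev_{(X,-Y)}$-images of the 2- and 3-cycle relations for $\varphi$, already invoked in Proposition \ref{prop:even associator free}. The main obstacle I anticipate is not conceptual but combinatorial: correctly tracking the half-integer exponents $\exp(\pm X/2)$, $\exp(\pm Y/2)$, $\exp(\pm(Y-X)/2)$ and the $N_+$-conjugations through the $(1,1)$-entry extraction, and verifying that the scalar prefactors collapse to the single Kummer-cocycle exponentials $\exp\{-\rho_z(\sigma)\aaa\}$ and $\exp\{-\rho_{1-z}(\sigma)\aaa\}$ with the correct argument ($z$ versus $1-z$) and sign; this requires being careful that $\rho_z$ and $\rho_{1-z}$ are the ones whose Kummer properties are compatible with the chosen branches along $\gamma_z$, as in Theorem \ref{thm: l-adic Euler's transformation formula}.
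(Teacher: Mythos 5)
You have the right model in mind (the complex identities \eqref{eqD} with $z^{-a}$ replaced by the Kummer-cocycle exponential), but the step that actually proves the proposition is missing. The cocycle formulas from the proof of Proposition \ref{prop:even associator free} only rewrite each $G_{\vec\ast}^\varphi(X,-Y)(\sigma)(z)$ as $f_\sigma^w$ evaluated at some conjugated pair of matrices; they do not relate the $(1,1)$ entry of, say, $\V_{\vec{\infty 1}}(\sigma)(z)$ to the $(1,1)$ entry of $\V_{\vec{01}}(\sigma)(\frac1z)$ with the parameters transformed by $\iota:(\aaa,\bbb,\ccc)\mapsto(\aaa,\aaa+1-\ccc,\aaa-\bbb+1)$. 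Your claim that all the needed relations are just the $\ev_{(X,-Y)}$-images of the 2- and 3-cycle relations for $\varphi$ is not correct: those relations are already exhausted in deriving the cocycle formulas themselves, whereas the identities \eqref{eq:Phiinfty1}--\eqref{eq:Phiinfty0} compare $f_\sigma^w$ evaluated at two genuinely different, parameter-transformed matrix pairs, with a $\sigma$-dependent scalar prefactor in between. That is a nontrivial statement about the group-like series $f_\sigma^w$ itself, not about the associator $\varphi$, and asserting that Oi's bookkeeping ``transports'' does not establish it.

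The paper closes exactly this gap with a universality argument via Chen's theory: Oi's complex identity \eqref{eq:20210606} is an identity among iterated integrals, hence, by the injectivity of the iterated-integral map on $TV=\oplus_n V^{\otimes n}$, it lifts to an identity in the shuffle Hopf algebra $TV\hat\otimes\Q[[\aaa,\bbb,\ccc-1]]$ for the generic group-like series $G(e_0,e_1)=\sum_W\Omega_W\otimes W$, with $\log z$ replaced by the symbol $[\frac{dz}{z}]$. Being a formal consequence of the shuffle relations, the identity then specializes under the shuffle-algebra homomorphism $\ev_g$ to \emph{any} group-like series $g\in\Q_\ell\langle\langle e_0,e_1\rangle\rangle$, in particular to $\iota_\varphi(f_\sigma^z)$, and the coefficient $I(e_0)$ becomes the Kummer cocycle $\rho_z(\sigma)$ --- this, rather than a ``tangential-basepoint normalization introducing $x_0^{\rho_z(\sigma)}$,'' is where the prefactor comes from. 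Without some such transfer principle (or a fully worked-out conjugation-plus-scalar-twist computation at the level of the $2\times2$ matrices, which you do not supply), your proposal assumes the conclusion at the decisive step.
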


\begin{proof}

{\it The first equality:}
Let $\iota$ be the algebra automorphism of $\PP$
such that 
\begin{equation}\label{eq:iota-map}
\iota(\aaa)=\aaa,    \qquad
\iota(\bbb)=\aaa+1-\ccc, \qquad
\iota(\ccc-1)=\aaa-\bbb
\end{equation}
By \eqref{eq:PhiGinfty1}, \eqref{eq:Phi01} and 
the replacement of $z$ with $z^{-1}$, we see that
it is sufficient to show that the equation
\begin{align*}
&\left[G^\varphi_{\vec{0 1}}(Y-X,-Y)(\sigma)(z)\cdot
\begin{pmatrix}
1 & 1 \\
\frac{-\aaa}{\bbb} & -1
\end{pmatrix}\right]_{11} \\
&\qquad \qquad =
\exp\left\{\rho_{z}(\sigma)\aaa\right\}\cdot
\iota\left(\left[G^\varphi_{\vec{01}}(X,-Y)(\sigma)(z)\cdot
\begin{pmatrix}
1 & 1 \\
0 & \frac{\ppp}{\bbb}
\end{pmatrix}\right]_{11}\right)
\end{align*}
holds in $\PP$.

In the complex case, we have
$
G_{\vec{0 1}}(Y-X,-Y)(z)=G_{\vec{\infty 1}}(X,-Y)(\frac{1}{z}).
$
By \cite{O} (36) and (97), we obtain
$$
\left[G_{\vec{0 1}}(Y-X,-Y)(z)\cdot
\begin{pmatrix}
1 & 1 \\
\frac{-\aaa}{\bbb} & -1
\end{pmatrix}\right]_{11}
=\exp\{\log(z)\aaa\}\cdot
\HG{\aaa,\aaa+1-\ccc}{\aaa-\bbb+1}{z},
$$
whence we obtain 
\begin{align}\label{eq:20210606}
&\left[G_{\vec{0 1}}(Y-X,-Y)(z)\cdot
\begin{pmatrix}
1 & 1 \\
\frac{-\aaa}{\bbb} & -1
\end{pmatrix}\right]_{11} \\ \notag
&\qquad \qquad =
\exp\left\{\log({z})\aaa\right\}\cdot
\iota\left(\left[G_{\vec{01}}(X,-Y)(z)\cdot
\begin{pmatrix}
1 & 1 \\
0 & \frac{\ppp}{\bbb}
\end{pmatrix}\right]_{11}\right)
\end{align}
in $\C[[\aaa,\bbb,\ccc-1]]$.

Let $TV=\oplus_{n=0}^\infty V^{\otimes n}$ with  $V:=H^1_{\mathrm{DR}}(\X,\Q)$
and $V^{\otimes 0}=\Q$,
where we encode a structure of Hopf algebra with the shuffle product and the deconcatenation coproduct.
We consider the $\Q$-linear map 
associated with iterated integrals
$$
\rho:TV
\to
\mathrm{Map}\left(\pi_1(\X(\C);0,z),\ \C
\right)
$$
which sends each $\omega_{i_m}\otimes\cdots\otimes\omega_{i_1}\in V^{\otimes m}$ to
\begin{equation*}
\rho(\omega_{i_m}\otimes\cdots\otimes\omega_{i_1})(\gamma)=
\int_{0<t_1< \cdots <t_m<1}
\omega_{i_m}({\gamma(t_m)})\cdot
\omega_{i_{m-1}}({\gamma(t_{m-1})})\cdot\cdots
\omega_{i_1}({\gamma(t_1)}).
\end{equation*}
Here, $\pi_1(\X(\C);0,z)$ is the set of homotopy paths $\gamma_z$ from $\vec{01}$ to $z$.
Actually, $\rho$  induces an isomorphism of Hopf algebras between
$TV$ and the space $\mathrm{Im} \rho$ of iterated integrals over $\X$ due to Chen's theory (cf. \cite{C}).

Since \eqref{eq:20210606} is regarded as the equality
\begin{align*}\label{eq:20210606}
&\left[G_{\vec{0 1}}(Y-X,-Y)(\gamma_z)\cdot
\begin{pmatrix}
1 & 1 \\
\frac{-\aaa}{\bbb} & -1
\end{pmatrix}\right]_{11} 
=
\exp\left\{\log(\gamma_{z})\aaa\right\}\cdot
\iota\left(\left[G_{\vec{01}}(X,-Y)(\gamma_z)\cdot
\begin{pmatrix}
1 & 1 \\
0 & \frac{\ppp}{\bbb}
\end{pmatrix}\right]_{11}\right)
\end{align*}
in 
$\mathrm{Map}\left(\pi_1(\X(\C);0,z),\ \C\right)\widehat\otimes
\C\langle\langle e_0,e_1\rangle\rangle$,
it yields an equality in 
$TV\widehat\otimes
\C\langle\langle e_0,e_1\rangle\rangle$.

The above  $G_{\vec{01}}(e_0,e_1)(z)=G_{\vec{01}}(e_0,e_1)(\gamma_z)$ corresponds to the element 
$G(e_0,e_1)$
in the $\Q$-structure
$TV 
\hat\otimes
\Q\langle\langle e_0,e_1\rangle\rangle
$
given by
$$
G(e_0,e_1):=\sum_{W:\text{ word}} \Omega_W\otimes W
$$
where for each word $W$,
we mean $\Omega_W$  to be an element in $TV$ obtained by substituting
$[\frac{dz}{z}]$ (resp. $[\frac{dz}{z-1}]$) for
$e_0$ (resp. $e_1$) 
in $V\subset TV$.  
Whence by \eqref{eq:20210606}, we have
$$
\left[G(Y-X,-Y)\cdot
\begin{pmatrix}
1 & 1 \\
\frac{-\aaa}{\bbb} & -1
\end{pmatrix}\right]_{11} 
=\exp\left\{[\frac{dz}{z}]\aaa\right\}\cdot
\iota\left(\left[G(X,-Y)\cdot
\begin{pmatrix}
1 & 1 \\
0 & \frac{\ppp}{\bbb}
\end{pmatrix}\right]_{11}\right)
$$
in 
$TV\hat\otimes
\Q[[\aaa,\bbb,\ccc-1]]$.

Assume that $g=1+\sum_WI(W)W$ is any group-like series in $\Q_\ell\langle\langle e_0,e_1\rangle\rangle$.
Since $g$ is group-like, $I(W)$s satisfy the shuffle product.
We have a shuffle algebra homomorphism 
$\ev_g: TV 
\to \Q_\ell$
sending $\Omega_W$ to $I(W)$.
By applying $\ev_g$ to the above equality,
we obtain
$$
\left[g(Y-X,-Y)\cdot
\begin{pmatrix}
1 & 1 \\
\frac{-\aaa}{\bbb} & -1
\end{pmatrix}\right]_{11} 
=\exp\left\{I(e_0)\aaa\right\}\cdot
\iota\left(\left[g(X,-Y)\cdot
\begin{pmatrix}
1 & 1 \\
0 & \frac{\ppp}{\bbb}
\end{pmatrix}\right]_{11}\right)
$$
in $\Q_\ell[[\aaa,\bbb,\ccc-1]]$.
This is how our claim is proved.

\bigskip
{\it The second equality:} 
By \eqref{eq:PhiG10}, \eqref{eq:PhiG1infty} and \eqref{eq:Phi10}, 
it is sufficient to show that the equation
\begin{align*}
&\left[G_{\vec{10}}^\varphi(Y-X,-Y)(\sigma)(\frac{1}{z})\cdot
\begin{pmatrix}
1 & 0 \\
\frac{-\aaa}{\qqq} & \frac{1-\qqq}{\bbb}
\end{pmatrix}\right]_{11} \\
&\qquad \qquad =
\exp\left\{-\rho_{{z}}(\sigma)\aaa\right\}\cdot
\iota\left(\left[G_{\vec{10}}^\varphi(X,-Y)(\sigma)(\frac{1}{z})\cdot
\begin{pmatrix}
1 & 1 \\
\frac{-\aaa}{\qqq} & \frac{\qqq-1}{\bbb}
\end{pmatrix}\right]_{11}\right)
\end{align*}
holds in $\PP[\qqq^{-1}]$.

In the complex case, 
we have
$G_{\vec{10}}(Y-X,-Y)(\frac{1}{z})=G_{\vec{1\infty}}(X,-Y)(z)$.
Thus, by the formula for
$\begin{pmatrix} 1, & 0 \end{pmatrix}\cdot \V_{\vec{1\infty}}(z)$
in \S \ref{sec:complex case}, we  have
$$
\left[G_{\vec{10}}(Y-X,-Y)(\frac{1}{z})\cdot
\begin{pmatrix}
1 & 0 \\
\frac{-\aaa}{\qqq} & \frac{1-\qqq}{\bbb}
\end{pmatrix}\right]_{11}
=\exp\left\{-\log(z)\aaa\right\}\cdot
\HG{\aaa,\aaa-\ccc+1}{\aaa+\bbb+1-\ccc}{1-\frac{1}{z}}.
$$
While by \cite{O} (34) and (70),
\footnote{
It looks there is an error in the matrix on the equation \cite{O}  (70). The $(2,2)$ entry should be $\frac{\beta}{\alpha+\beta-\gamma}$ instead of$\frac{1}{\alpha+\beta-\gamma}$.
}
we have
$$
\left[G_{\vec{10}}(X,-Y)(z)\cdot
\begin{pmatrix}
1 & 0 \\
\frac{-\aaa}{\qqq} & \frac{\qqq-1}{\bbb}
\end{pmatrix}\right]_{11}
=\HG{\aaa,\bbb}{\aaa+\bbb+1-\ccc}{1-z}.
$$
Thus, we obtain
{\small
$$
\left[G_{\vec{10}}(Y-X,-Y)(\frac{1}{z})\cdot
\begin{pmatrix}
1 & 0 \\
\frac{-\aaa}{\qqq} & \frac{1-\qqq}{\bbb}
\end{pmatrix}\right]_{11} 
=
\exp\left\{-\log(z)\aaa\right\}\cdot
\iota\left(\left[G_{\vec{10}}(X,-Y)(\frac{1}{z})\cdot
\begin{pmatrix}
1 & 1 \\
\frac{-\aaa}{\qqq} & \frac{\qqq-1}{\bbb}
\end{pmatrix}\right]_{11}\right)
$$
}\par
\noindent where $\iota$ is an extension our previously introduced $\iota$ to $\PP[\qqq^{-1}]$ (N.B. $\iota(\qqq)=\qqq$).
By the same arguments as for the proof of the first equality,
we obtain the claim.

\bigskip
{\it The third equality:}
By \eqref{eq:PhiGinfty0}, \eqref{eq:PhiG0infty} and \eqref{eq:Phi0infty}, 
it is sufficient to prove that the equation
\begin{align*}
&\left[G_{\vec{0\infty}}^\varphi(Y-X,-Y)(\sigma)(\frac{1}{z})\cdot
\begin{pmatrix}
1 & 1 \\
\frac{-\aaa}{\bbb} & -1
\end{pmatrix}\right]_{11} \\
&\qquad \qquad =
\exp\left\{-\rho_{(1-z)}(\sigma)\aaa\right\}\cdot
\varsigma\left(\left[G_{\vec{0\infty}}^\varphi(X,-Y)(\sigma)(\frac{1}{z})\cdot
\begin{pmatrix}
1 & 1 \\
0 & \frac{\ppp}{\bbb}
\end{pmatrix}\right]_{11}\right)
\end{align*}
holds in $\bbb^{-1}\PP$.
Here, $\varsigma$ means the automorphism of $\PP$
sending $\aaa$, $\bbb$, and $\ccc$ to
$\aaa$, $\aaa+1-\ccc$, and $\aaa-\bbb+1$, respectively.

In the complex case, similarly, we have
$$
\left[G_{\vec{0\infty}}(Y-X,-Y)(\frac{1}{z})\cdot
\begin{pmatrix}
1 & 1 \\
\frac{-\aaa}{\bbb} & -1
\end{pmatrix}\right]_{11}
=\exp\{-\log(1-z)\aaa\}\cdot
\HG{\aaa,\ccc-\bbb}{\aaa-\bbb+1}{\frac{1}{1-z}},
$$
$$
\left[G_{\vec{0\infty}}(X,-Y)(\frac{1}{z})\cdot
\begin{pmatrix}
1 & 1 \\
0 & \frac{\ppp}{\bbb}
\end{pmatrix}\right]_{11}
=\exp\{-\log(\frac{z-1}{z})\aaa\}\cdot
\HG{\aaa,\ccc-\bbb}{\ccc}{\frac{1}{1-z}}
$$
by the formulae for
$\begin{pmatrix} 1, & 0 \end{pmatrix}\cdot \V_{\vec{\infty0}}(z)$ and 
$\begin{pmatrix} 1, & 0 \end{pmatrix}\cdot \V_{\vec{0\infty}}(z)$
in \S \ref{sec:complex case}.
Again, the same arguments follow our claim.
\end{proof}

A formal version of our $\ell$-adic Euler transformation formula is given as follows:

\begin{thm}\label{thm: formal Euler transformation}
The equality
\begin{equation}
\HGdag{\aaa',\bbb'}{\ccc'}{z}(\sigma)=
\exp\left\{({\ccc'}-{\aaa'}-{\bbb'}){\rho_{1-z}}(\sigma)\right\}\cdot
\HGdag{\ccc'-\aaa',\ccc'-\bbb'}{\ccc'}{z}(\sigma)
\end{equation}
holds in $\ccc'^{-1}\Q_\ell[[\aaa',\bbb',\ccc']]$.
\end{thm}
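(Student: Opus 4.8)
The plan is to transfer the classical Euler transformation \eqref{eq: Euler's transformation}, applied to the solution at $1$, into the $\ell$-adic setting by the same device used in the proof of Proposition \ref{three compatibilities}. By \eqref{eq:Phi10} (with $z$ replaced by $1-z$), the left-hand side of the asserted equality is $[\V_{\vec{10}}(\sigma)(1-z)]_{11}$, viewed in $\qqq^{-1}\PP$ through $\aaa'=\aaa$, $\bbb'=\bbb$, $\ccc'=\qqq=\aaa+\bbb+1-\ccc$. Let $\kappa$ be the continuous ring automorphism of $\qqq^{-1}\PP$ with $\kappa(\aaa)=\bbb+1-\ccc$, $\kappa(\bbb)=\aaa+1-\ccc$, $\kappa(\ccc-1)=1-\ccc$: the three images lie in the maximal ideal and $\kappa(\qqq)=\qqq$, so this is a well-defined involution of $\qqq^{-1}\PP$, and it carries $\HGdag{\aaa,\bbb}{\qqq}{z}(\sigma)$ to $\HGdag{\ccc'-\aaa',\ccc'-\bbb'}{\ccc'}{z}(\sigma)$ (as $\ccc'-\aaa'=\bbb+1-\ccc$, $\ccc'-\bbb'=\aaa+1-\ccc$). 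Since $\ccc'-\aaa'-\bbb'=1-\ccc$, the theorem becomes the assertion
\begin{equation*}
[\V_{\vec{10}}(\sigma)(1-z)]_{11}=\exp\bigl\{(1-\ccc)\,\rho_{1-z}(\sigma)\bigr\}\cdot\kappa\bigl([\V_{\vec{10}}(\sigma)(1-z)]_{11}\bigr)
\end{equation*}
in $\qqq^{-1}\Q_\ell[[\aaa,\bbb,\ccc-1]]$.

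The complex input is Euler's transformation itself. By the formula for $\begin{pmatrix}1,&0\end{pmatrix}\cdot\V_{\vec{10}}(z)$ in \S\ref{sec:complex case}, classically $[\V_{\vec{10}}(\zeta)]_{11}=\HG{a,b}{a+b+1-c}{1-\zeta}$, and \eqref{eq: Euler's transformation} applied with parameters $(a,b,a+b+1-c)$ and variable $1-\zeta$ gives $\HG{a,b}{a+b+1-c}{1-\zeta}=\zeta^{\,1-c}\,\HG{b+1-c,a+1-c}{a+b+1-c}{1-\zeta}$, the right-hand hypergeometric function being the $\kappa$-transform of the left one. Hence, as an identity in $\qqq^{-1}\C[[\aaa,\bbb,\ccc-1]]$ depending analytically on $\zeta\in\X(\C)$ (with $\kappa$ acting on coefficients),
\begin{equation*}
[\V_{\vec{10}}(\zeta)]_{11}=\zeta^{\,1-\ccc}\cdot\kappa\bigl([\V_{\vec{10}}(\zeta)]_{11}\bigr),\qquad \zeta^{\,1-\ccc}=\exp\{(1-\ccc)\log\zeta\}.
\end{equation*}

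Now I would lift and specialize, exactly along the lines of the proof of Proposition \ref{three compatibilities}. Writing $G_{\vec{10}}(e_0,e_1)(\zeta)=G_{\vec{01}}(e_1,e_0)(1-\zeta)$, the analytic function $\zeta\mapsto[\V_{\vec{10}}(\zeta)]_{11}$ equals $\rho(\Psi)(\gamma_{1-\zeta})$ for a canonical element $\Psi\in TV\hat\otimes\qqq^{-1}\Q[[\aaa,\bbb,\ccc-1]]$, and $\log\zeta=\rho([\frac{dz}{z-1}])(\gamma_{1-\zeta})$ is the coefficient of $e_1$ in $G_{\vec{01}}(e_0,e_1)(1-\zeta)$; since the displayed complex identity holds for every $\zeta$, injectivity of $\rho$ upgrades it to the equality $\Psi=\exp\bigl\{(1-\ccc)[\frac{dz}{z-1}]\bigr\}\cdot\kappa(\Psi)$ in $TV\hat\otimes\qqq^{-1}\Q[[\aaa,\bbb,\ccc-1]]$ (shuffle product and shuffle exponential), which descends to the $\Q$-structure. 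Finally I apply the shuffle-algebra homomorphism $\ev_g$ for the group-like series $g:=\iota_\varphi(f_\sigma^z)\in\Q_\ell\langle\langle e_0,e_1\rangle\rangle^\times$: since $\ev_g$ commutes with $\kappa$ (which acts only on the coefficients), $\ev_g(\Psi)=[\V_{\vec{10}}(\sigma)(1-z)]_{11}$ by \eqref{eq:G10=f} (using $\varphi(X,-Y)=M_+$), while $\ev_g([\frac{dz}{z-1}])$ is the coefficient of $e_1$ in $\iota_\varphi(f_\sigma^z)$, namely the Kummer cocycle $\rho_{1-z}(\sigma)$ attached to the loop around $1$. This produces the boxed identity, hence the theorem.

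The step I expect to be the main obstacle is the bookkeeping rather than any new idea. One has to check that $[\V_{\vec{10}}]_{11}$ and every intermediate object genuinely live in rings on which $\kappa$ and the evaluations make sense — this works out because the $(1,1)$-entry requires only the localization at $\qqq$, which $\kappa$ fixes, so one must take care not to drift into the $\bbb^{-1}$ or $\ppp^{-1}$ localizations that occur for the other entries — and, most delicately, one must track the interchange $e_0\leftrightarrow e_1$ together with $\zeta\mapsto 1-\zeta$ so that the exponential factor emerges with the cocycle $\rho_{1-z}$ and not $\rho_z$; the index of this Kummer cocycle is precisely where the classical factor $(1-w)^{c-a-b}$ of \eqref{eq: Euler's transformation} is read off. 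All of this is of the same nature as the computations already carried out for Proposition \ref{three compatibilities}.
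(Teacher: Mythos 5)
Your proof is correct, but it takes a genuinely different route from the paper's. The paper reparametrizes with $\bbb'=\aaa+1-\ccc$, $\ccc'=\qqq$ and $z=1-\frac{1}{w}$, so that via \eqref{eq:Phi1infty} both sides of the claimed identity become $\exp\{\rho_w(\sigma)\aaa\}$ times $[\V_{\vec{1\infty}}(\sigma)(w)]_{11}$ and its image under the plain switch $\sw_{\aaa,\bbb}$; the theorem then reduces to the $\aaa\leftrightarrow\bbb$ invariance of that entry, which is read off directly from Oi's Ohno--Zagier-type generating series (the $(1,1)$ and $(1,2)$ entries depend on $\aaa,\bbb$ only through $\aaa\bbb$ and $\aaa\bbb+\ppp\qqq$). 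You instead keep $\aaa'=\aaa$, $\bbb'=\bbb$, $\ccc'=\qqq$, reduce to the twisted symmetry $[\V_{\vec{10}}(\sigma)(1-z)]_{11}=\exp\{(1-\ccc)\rho_{1-z}(\sigma)\}\cdot\kappa([\V_{\vec{10}}(\sigma)(1-z)]_{11})$ for the involution $\kappa$ fixing $\qqq$, and establish it by running the Chen/shuffle transfer machinery of Proposition \ref{three compatibilities} once more, with the classical Euler transformation \eqref{eq: Euler's transformation} (applied at parameters $(a,b,a+b+1-c)$ and argument $1-\zeta$) as the complex input; your identifications ($\kappa(\qqq)=\qqq$ so only the $\qqq^{-1}$ localization is needed; the $e_1$-coefficient of $\iota_\varphi(f_\sigma^z)$ equals $\rho_{1-z}(\sigma)$, matching $\log\zeta$ along $\gamma_{1-\zeta}$) all check out. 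What each approach buys: yours is more uniform and conceptual --- it literally transports Euler's identity and needs neither \eqref{eq:Phi1infty} nor the explicit Ohno--Zagier sums --- at the cost of repeating the iterated-integral lifting argument for a new identity; the paper's avoids any further analytic transfer by exhibiting the required symmetry as a manifest algebraic feature of the generating series, at the cost of an extra reparametrization through the solution at $\vec{1\infty}$.
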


\begin{proof}
By \eqref{eq:Phi1infty}
and the reparametrization 
$\aaa'=\aaa$, $\bbb'=\aaa+1-\ccc$, $\ccc'=\aaa+\bbb+1-\ccc$ and $z=1-\frac{1}{w}$,
the left-hand side is calculated to be
$$
\HGdag{\aaa,\aaa+1-\ccc}{\aaa+\bbb-\ccc+1}{1-\frac{1}{w}}(\sigma)=
\exp\left\{\rho_{w}(\sigma)\aaa\right\}\cdot
[\V_{\vec{1\infty}}(\sigma)(w)]_{11}
$$
for an even unitary associator $\varphi$.
The right-hand side is calculated to be
\begin{align*}
\exp\left\{\rho_{w}(\sigma)(\aaa-\bbb)\right\}\cdot
&\HGdag{\bbb,\bbb+1-\ccc}{\aaa+\bbb-\ccc+1}{1-\frac{1}{w}}(\sigma) \\
&=\exp\left\{\rho_{w}(\sigma)\aaa\right\}\cdot
\sw_{\aaa,\bbb}\left([\V_{\vec{1\infty}}(\sigma)(w)]_{11}\right)
\end{align*}
where $\sw_{\aaa,\bbb}$ means the automorphism of $\PP[\frac{1}{\qqq}]$
switching  $\aaa$ and $\bbb$.
Hence, it is sufficient to show that 
$[\V_{\vec{1\infty}}(\sigma)(z)]_{11}$
is invariant under the switch
$\sw_{\aaa,\bbb}$.
The matrix is calculated to be
$$
\V_{\vec{1\infty}}(\sigma)(z)=
\left[
G_{\vec{1\infty}}^{\varphi}(X,-Y)(\sigma)(z)\cdot
\begin{pmatrix}
1 & 1 \\
0 & \frac{\ppp}{\bbb}
\end{pmatrix}
\right]
\cdot
\begin{pmatrix}
\frac{\aaa\bbb+\ppp\qqq}{\ppp\qqq} & \frac{\qqq-1}{\ppp} \\
\frac{-\aaa\bbb}{\ppp\qqq} & \frac{1-\qqq}{\ppp}
\end{pmatrix}.
$$
It is evident that the  last matrix
is invariant under the switch $\sw_{\aaa,\bbb}$.
The first row of the product 
$
\left[
G_{\vec{1\infty}}^{\varphi}(X,-Y)(\sigma)(z)\cdot
\begin{pmatrix}
1 & 1 \\
0 & \frac{\ppp}{\bbb}
\end{pmatrix}
\right]
$
is  also invariant under the switch $\sw_{\aaa,\bbb}$.
This is because the $(1,1)$ entry is calculated to be
$$
1+ \aaa\bbb\sum_{\substack{k,n,s>0 \\k>n+s, n \geqslant s}}
h_0(k,n,s)\ppp^{k-n-s}\qqq^{n-s}
(\aaa\bbb+\ppp\qqq)^{s-1}
$$
by \cite{O} (65)
and the $(1,2)$ entry is calculated to be
$$
\exp\{\rho_{1-\frac{1}{z}}(\sigma)\ppp\}
\cdot\bigl\{
1+ (\aaa\bbb+\ppp\qqq)\sum_{\substack{k,n,s>0 \\k>n+s, n \geqslant s}}
h_0(k,n,s) (-\ppp)^{k-n-s}\qqq^{n-s}
(\aaa\bbb)^{s-1}
\bigr\}
$$
by \cite{O} (66) and its following remark.
Here, $h_0(k,n,s)$ is given by
$$
\sum_{\substack{ W\in{e_0{U\frak f_2}e_1}:\text{ word} \\ \wt(W)=k, \ \mathrm{dp}(W)=n, \ \mathrm{ht}(W)=s}}
(-1)^{n}( G_{\vec{1\infty}}^{\varphi}(e_0,e_1)(\sigma)(z)
\bigm| W).
$$
Whence we obtain the equality.
\end{proof}

%
%


The following proposition  enables us to extend  our $\ell$-adic hypergeometric function
$\HG{a,b}{c}{z}{(\sigma)}$ 
defined when $|a|_\ell, |b|_\ell, |c-1|_\ell<1$
to the parameter $(a,b,c)$ in ${\mathfrak D}$
by
\begin{align*}
&\HG{a,b}{c}{z}{(\sigma)}:=\HGdag{a,b}{c}{z}{(\sigma)} \quad \text{when} \quad
|a|_\ell, |b|_\ell, |c|_\ell<1 \text{ with } c\neq 0,
\\
&\HG{a,b}{c}{z}{(\sigma)}:=\HGddag{a,b}{c}{z}{(\sigma)}\quad \text{when} \quad
|a|_\ell, |b-1|_\ell, |c-1|_\ell<1.
\end{align*}

\begin{prop}\label{prop:enlarge}
(1).
When $|a'|_\ell, |b'|_\ell, |c'|_\ell<1$ with $c'\neq 0$, the evaluation of \eqref{eq:2F1prime}
to $a',b',c'$ converges.

(2).
When $|a''|_\ell, |b''-1|_\ell, |c''-1|_\ell<1$, the evaluation of 
\eqref{eq:2F1primeprime}
to $a'',b'',c''$ converges.
\end{prop}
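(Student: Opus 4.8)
\emph{Proof proposal.} The plan is to deduce both statements from the convergence mechanism already used for Theorem~\ref{fundamental theorem of l-adic HG}.(ii), applied now to the presentations \eqref{eq:G10=f} and \eqref{eq:G0infty=f} of $G_{\vec{10}}^{\varphi}(X,-Y)(\sigma)(z)$ and $G_{\vec{0\infty}}^{\varphi}(X,-Y)(\sigma)(z)$ supplied by Proposition~\ref{prop:even associator free}, together with Proposition~\ref{convergence for M (1)}. Indeed, by \eqref{eq:Phi10} and \eqref{eq:Phi0infty}, the series $\HGdag{\aaa',\bbb'}{\ccc'}{z}(\sigma)$ and $\HGddag{\aaa'',\bbb''}{\ccc''}{z}(\sigma)$ are built, after an affine change of parameters, from the $(1,1)$ entry of $\V_{\vec{10}}(\sigma)(\,\cdot\,)$, resp.\ of $\V_{\vec{0\infty}}(\sigma)(\,\cdot\,)$, so the whole task is to show that these $(1,1)$ entries evaluate at the relevant $(a,b,c)\in\Z_\ell^3$.

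First I would translate the hypotheses. For (1), set $a=a'$, $b=b'$, $c=a'+b'+1-c'$; then the substitution $\ccc'=\qqq$ of \eqref{eq:Phi10} identifies the value at $(a',b',c')$ of $\HGdag{\aaa',\bbb'}{\ccc'}{1-z}(\sigma)$ with the value at $(a,b,c)$ of $[\V_{\vec{10}}(\sigma)(z)]_{11}$; since $p=1-c=c'-a'-b'$, all of $a,b,p$ and $q=c'$ lie in $\ell\Z_\ell$, so \eqref{eq:ABC condition} holds while $q=c'\neq 0$. For (2), set $a=a''$, $b=c''-b''$, $c=c''$; then the substitution of \eqref{eq:Phi0infty} identifies the value at $(a'',b'',c'')$ of $\HGddag{\aaa'',\bbb''}{\ccc''}{z}(\sigma)$ with $\exp\{\rho_{1-w}(\sigma)\,a\}\cdot[\V_{\vec{0\infty}}(\sigma)(w)]_{11}$ evaluated at $(a,b,c)$, where $w$ is the base point with $w/(w-1)$ equal to the given argument; now $a,b,p$ and $q=a''-b''+1$ all lie in $\ell\Z_\ell$, so again \eqref{eq:ABC condition} holds.

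Next I would run the continuous-homomorphism argument of Proposition~\ref{convergence for M (2)}. By Proposition~\ref{convergence for M (1)}, $M_{+,0}\in\SL_2(\Z_\ell)$ and $M_{+,0}\equiv I_2\bmod\ell$; and since $\ell$ is odd with $X_0,Y_0\in\ell\Mat_2(\Z_\ell)$, the estimate $v_\ell(n!)\leqslant(n-1)/(\ell-1)$ gives $\exp(\pm X_0),\exp(\pm Y_0),\exp(\pm X_0/2)\in I_2+\ell\Mat_2(\Z_\ell)$. Hence the matrices occurring as the $x_1$-image in \eqref{eq:G10=f}, resp.\ \eqref{eq:G0infty=f}, namely $M_{+,0}\exp(X_0)M_{+,0}^{-1}$ and $\exp(-X_0/2)M_{+,0}^{-1}\exp(Y_0)M_{+,0}\exp(-X_0/2)$, together with the corresponding $x_0$-images $\exp(-Y_0)$ and $\exp(X_0)$, all lie in $I_2+\ell\Mat_2(\Z_\ell)$. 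As in Proposition~\ref{convergence for M (2)} (via the Magnus embedding), there are continuous homomorphisms $\hat F_2^{(\ell)}\to\GL_2(\Z_\ell)$ realizing these substitutions, and their values at $f_\sigma^{1-z}$, resp.\ at $f_\sigma^{w/(w-1)}$, are the evaluations of $G_{\vec{10}}^{\varphi}(X,-Y)(\sigma)(z)$, resp.\ of $G_{\vec{0\infty}}^{\varphi}(X,-Y)(\sigma)(w)$, at $(a,b,c)$, lying in $\GL_2(\Z_\ell)$.

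Finally I would extract the $(1,1)$ entries. By \eqref{eq:PhiG0infty} one has $[\V_{\vec{0\infty}}(\sigma)(w)]_{11}=[G_{\vec{0\infty}}^{\varphi}(X,-Y)(\sigma)(w)]_{11}$, which carries no denominator, so it converges at $(a,b,c)$; multiplying by $\exp\{\rho_{1-w}(\sigma)\,a\}$, which converges since $\rho_{1-w}(\sigma)\,a\in\ell\Z_\ell$ and $\ell$ is odd, yields (2). By \eqref{eq:PhiG10} one has $[\V_{\vec{10}}(\sigma)(z)]_{11}=[G_{\vec{10}}^{\varphi}(X,-Y)(\sigma)(z)]_{11}-\tfrac{\aaa}{\qqq}\,[G_{\vec{10}}^{\varphi}(X,-Y)(\sigma)(z)]_{12}$; both matrix entries converge in $\Z_\ell$ at $(a,b,c)$, and since $\qqq$ evaluates to $q=c'\neq0$, the whole $(1,1)$ entry converges, yielding (1). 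The step needing the most care is the compatibility between the formal power-series evaluation and this $\ell$-adic limit — that substituting $(a,b,c)$ into the formal series $G_{\vec{10}}^{\varphi}(X,-Y)(\sigma)(z)\in\GL_2(\PP)$ returns exactly the image of $f_\sigma^{1-z}$ under the continuous homomorphism — but this is argued precisely as for Theorem~\ref{fundamental theorem of l-adic HG}.(ii), the evaluation $(\aaa,\bbb,\ccc)\mapsto(a,b,c)$ commuting with the maps $e_0\mapsto X$, $e_1\mapsto -Y$ and $M_+\mapsto M_{+,0}$ that build these matrices; the only genuinely new feature is that the pole of $\HGdag$ along $\ccc'=0$ is harmless exactly because $c'\neq0$ is assumed.
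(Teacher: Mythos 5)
Your proposal is correct and follows essentially the same route as the paper: it reduces, via \eqref{eq:Phi10} and \eqref{eq:Phi0infty}, to the convergence of $G_{\vec{10}}^\varphi(X,-Y)(\sigma)(z)$ and $G_{\vec{0\infty}}^\varphi(X,-Y)(\sigma)(z)$ under \eqref{eq:ABC condition}, which the paper likewise deduces from \eqref{eq:G10=f}, \eqref{eq:G0infty=f} and the congruences \eqref{eq:cont bmod ell} through the Magnus-embedding argument of Proposition \ref{convergence for M (2)}. The paper's proof is just a two-sentence citation of these facts; your write-up supplies the parameter translation and the extraction of the $(1,1)$ entries explicitly, but the argument is the same.
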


\begin{proof}
It is sufficient to show that both
$G_{\vec{10}}^\varphi(X,-Y)(\sigma)(z)$ and
$G_{\vec{0\infty}}^\varphi(X,-Y)(\sigma)(z)$ 
converge when \eqref{eq:ABC condition} holds
by \eqref{eq:Phi10} and \eqref{eq:Phi0infty}.
They actually converge 
by \eqref{eq:cont bmod ell}, \eqref{eq:G10=f} and \eqref{eq:G0infty=f}.
\end{proof}
The $\ell$-adic analogue of
Euler's transformation theorem can be derived as follows:
\smallskip

\noindent
{\bf Proof of
Theorem \ref{thm: l-adic Euler's transformation formula}.}
The claim follows from
Theorem  \ref{thm: formal Euler transformation} and 
Proposition \ref{prop:enlarge}. (1)
because we have $\rho_{1-z}(\sigma)\in\Z_\ell$.
\qed

\begin{rem}
Our arguments for deducing results in the $\ell$-adic situation  from those in the complex case
that are observed in the proof of Theorem \ref{thm: formal Euler transformation} 
allow us to show the following $\ell$-adic analogues of 
the six formulae \eqref{eqD} of Kummer's solutions:
{\footnotesize
\begin{align*}
(1,0)\cdot \V_{\vec{01}}(\sigma)(z) &=
\begin{pmatrix} \HG{a,b}{c}{z}(\sigma), & \exp\{(1-c)\rho_z(\sigma)\}\cdot\HG{b+1-c,a+1-c}{2-c}{z}(\sigma)\end{pmatrix}, \\
(1,0)\cdot \V_{\vec{10}}(\sigma)(z)
&=
\begin{pmatrix} \HGdag{b,a}{a+b+1-c}{1-z}(\sigma), & \exp\{(c-a-b)\rho_{1-z}(\sigma)\}\cdot\HGdag{c-a,c-b}{c-a-b+1}{1-z}(\sigma)\end{pmatrix}, \\
(1,0)\cdot \V_{\vec{1\infty}}(\sigma)(z)
&=
\Bigl(
\exp\{-a\rho_z(\sigma)\}\cdot\HGdag{a,a+1-c}{a+b-c+1}{1-\frac{1}{z}}(\sigma), \\
&\qquad\qquad \exp\{(b-c)\rho_z(\sigma)\}\cdot \exp\{(c-a-b)\rho_{z-1}(\sigma)\}\cdot\HGdag{1-b,c-b}{1-a-b+c}{1-\frac{1}{z}}(\sigma)
\Bigr), \\
(1,0)\cdot \V_{\vec{\infty 1}}(\sigma)(z)
&=
\begin{pmatrix} \exp\{-a\rho_z(\sigma)\}\cdot\HG{a,a+1-c}{a-b+1}{\frac{1}{z}}(\sigma), & \exp\{-b\rho_z(\sigma)\}\cdot\HG{b+1-c,b}{b-a+1}{\frac{1}{z}}(\sigma)
\end{pmatrix}, \\
(1,0)\cdot \V_{\vec{\infty 0}}(\sigma)(z)
&=
\begin{pmatrix} \exp\{-a\rho_{1-z}(\sigma)\}\cdot\HGddag{a,c-b}{a-b+1}{\frac{1}{1-z}}(\sigma),& \exp\{-b\rho_{1-z}(\sigma)\}\cdot\HGddag{c-a,b}{1-a+b}{\frac{1}{1-z}}(\sigma)
\end{pmatrix},\\
(1,0)\cdot \V_{\vec{0\infty}}(\sigma)(z)
&=
\Bigl(\exp\{-a\rho_{1-z}(\sigma)\}\cdot\HGddag{a,c-b}{c}{\frac{z}{z-1}}(\sigma), \\
&\qquad\qquad \exp\{-a\rho_{1-z}(\sigma)\}\exp\{(1-c)\rho_{\frac{z}{z-1}}(\sigma)\}\cdot
\HGddag{1-b,a-c+1}{2-c}{\frac{z}{z-1}}(\sigma)\Bigr).
\end{align*}
}
\end{rem}

\begin{rem}
Finite field analogues of hypergeometric functions  have been discussed in 
the literature; see \cite{FLRST, Greene, Katz, Otsubo}, etc.
Since they are related to the  trace of Frobenius of certain $\ell$-adic Galois representations and our $\ell$-adic function is constructed from a  Galois representation,
it would be worthwhile to identify any
relationship between their hypergeometric functions and ours.
\end{rem}

{\it Acknowledgments.} The author has been supported by JSPS KAKENHI JP18H01110,
JP20H00115 and JP21H00969.
He is grateful to 
Benjamin Enriquez,
Hiroaki Nakamura, 
Noriyuki Otsubo,
and
Seidai Yasuda
who suggested various studies 
and 
directions related to this work
and  Ryotaro Harada who carefully read
an earlier version of the paper.

\appendix
\section{On  $\ev_{(X,-Y)}(\varphi(e_\infty,e_1))$}
\label{appendix A}

We prove an auxiliary lemma that is needed for the  proof of Proposition \ref{prop:even associator free}.

\begin{lem}\label{lemma in appendix}
The matrix 
$\ev_{(X,-Y)}(\varphi(e_\infty,e_1))$ in $\Mat_2(\K[[\aaa,\bbb,\ccc-1]])$
is independent of any choice of even unitary associators $\varphi$.
\end{lem}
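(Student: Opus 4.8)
The plan is to compute $\ev_{(X,-Y)}(\varphi(e_\infty,e_1))$ explicitly in terms of the associated gamma function $\Gamma_\varphi$ and then to recall that $\Gamma_\varphi=\Gamma_+$ for every even unitary associator (Remark \ref{rem:Gamma}.(2)). Since $\ev_{(X,-Y)}$ carries $e_0\mapsto X$, $e_1\mapsto -Y$ and hence $e_\infty=-e_0-e_1\mapsto Y-X$, the matrix in question is $\varphi(Y-X,-Y)$. As a preliminary simplification, combining the $2$-cycle relation $\varphi(e_0,e_1)\varphi(e_1,e_0)=1$ with the $3$-cycle relation for the unitary associator $\varphi$ one rewrites
$$\varphi(e_\infty,e_1)=e^{\frac{e_1}{2}}\,\varphi(e_0,e_1)\,e^{\frac{e_0}{2}}\,\varphi(e_0,e_\infty)^{-1}\,e^{\frac{e_\infty}{2}},$$
so that, after applying $\ev_{(X,-Y)}$ and substituting $\varphi(X,-Y)=M_+$ from Theorem \ref{thm:even unitary associator=M+0},
$$\ev_{(X,-Y)}\bigl(\varphi(e_\infty,e_1)\bigr)=e^{-Y/2}\,M_{+}\,e^{X/2}\,\varphi(X,Y-X)^{-1}\,e^{(Y-X)/2}.$$
The exponential factors and $M_+$ being manifestly independent of $\varphi$, it is enough to treat the single matrix $\ev_{(X,-Y)}(\varphi(e_0,e_\infty))=\varphi(X,Y-X)$.

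I would then compute $\varphi(X,Y-X)$ entry by entry, in the same manner as the four entries of $\varphi(X,-Y)$ are computed in the proof of Theorem \ref{thm: varphi=M}. This matrix is the $\ev_{(X,-Y)}$-specialization of the connection matrix $\varphi(e_0,e_\infty)$ of the KZ/hypergeometric system near $\infty$, and Oi's computations in \cite{O} express such connection matrices through the generating series of the coefficients $(\varphi\mid W)$ (the functions $g_\varphi(k,n,s\mid\cdots)$ in the proof of Theorem \ref{thm: varphi=M}). By \cite{F11} these coefficients satisfy the regularized double shuffle relations, hence the Ohno--Zagier relation \cite{OZ} applies and identifies each entry as an explicit rational function of $\aaa,\bbb,\ccc$ and of $\Gamma_\varphi$; the off-diagonal entries are extracted as the $(1,2)$- and $(2,1)$-entries are in Theorem \ref{thm: varphi=M} (the latter by the $2$-cycle/group-like word duality), and the $(2,2)$-entry follows from $\det=1$ as in Proposition \ref{prop: M in SL2}. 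Since every even unitary associator has $\Gamma_\varphi=\Gamma_+$, the resulting matrix is common to all of them, which is the assertion.

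The step I expect to be the real obstacle is this entry-wise computation: one must reorganize Oi's weight/depth/height sums for the rotated pair $(e_0,e_\infty)$ and verify that his derivations, carried out in \cite{O} for the KZ associator $\varPhi_{\mathrm{KZ}}$, go through for an arbitrary $\varphi$ once double shuffle is available. (The $2$- and $3$-cycle relations alone will not do: a short check shows that, given $\varphi(X,-Y)=M_+$, they only relate $\varphi(X,Y-X)$ and $\varphi(Y-X,-Y)$ to one another and leave both undetermined — the pentagon, entering through \cite{F11}, is essential.) An alternative I would keep in reserve is a torsor argument: two even unitary associators differ by the action of a pair $(1,f)$ with $f\in[F_2(\K),F_2(\K)]$ even (\cite{Dr}), so the discrepancy of $\ev_{(X,-Y)}(\varphi(e_\infty,e_1))$ is governed by $f\bigl(e^{Y-X},\,N_+^{-1}e^{-Y}N_+\bigr)$, where $N_+=\ev_{(X,-Y)}(\varphi(e_\infty,e_1))$, while one already knows $f(e^{X},M_+^{-1}e^{-Y}M_+)=I$ from Theorem \ref{thm:even unitary associator=M+0}.
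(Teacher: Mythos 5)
Your opening reduction is sound: the $2$- and $3$-cycle relations do give
$\varphi(e_\infty,e_1)=e^{e_1/2}\varphi(e_0,e_1)e^{e_0/2}\varphi(e_0,e_\infty)^{-1}e^{e_\infty/2}$, so after applying $\ev_{(X,-Y)}$ and Theorem \ref{thm:even unitary associator=M+0} everything hinges on the single matrix $\varphi(X,Y-X)$, and you are also right that the cycle relations alone leave that matrix undetermined. The gap is that you never actually pin it down. Your plan is to redo the entry-wise computation of Theorem \ref{thm: varphi=M} for the rotated pair, but that is not a routine adaptation: Oi's Lemma 3.1 and the Ohno--Zagier generating function apply to $\varphi$ evaluated at the specific pair $(X,-Y)$, whose shape (the first column of $X$ and the first row of $-Y$ vanish) is what makes each word $W$ contribute a single monomial governed by $(\wt(W),\mathrm{dp}(W),\mathrm{ht}(W))$. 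The matrix $Y-X$ has no vanishing row or column, so the entries of $\varphi(X,Y-X)$ are \emph{not} captured by the aggregates $g_\varphi(k,n,s|\cdot)$, and the Ohno--Zagier relation by itself says nothing about them. You flag this step as ``the real obstacle'' and leave it there; that is exactly where the proof is missing. (Your reserve torsor argument is also incomplete and, as stated, circular: the discrepancy $f\bigl(e^{Y-X},N_+^{-1}e^{-Y}N_+\bigr)$ is expressed through the very matrix $N_+$ whose well-definedness is at issue, and $f\bigl(e^{X},M_+^{-1}e^{-Y}M_+\bigr)=I$ does not imply that $f$ kills this other pair of group-like elements.)

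The paper closes this gap by a different mechanism, which you may want to compare with. It sets $P_\varphi=\ev_{(X,-Y)}(\varphi(e_0,e_1))\cdot\left(\begin{smallmatrix}1&1\\0&\ppp/\bbb\end{smallmatrix}\right)$ and $Q_\varphi=\ev_{(X,-Y)}(\varphi(e_\infty,e_1))\cdot\left(\begin{smallmatrix}1&1\\-\aaa/\bbb&-1\end{smallmatrix}\right)$ and proves four entry identities expressing $Q_\varphi$ through $\iota(P_\varphi)$, with $\iota$ the substitution \eqref{eq:iota-map}, valid for \emph{every} commutator group-like series $\varphi$ --- no pentagon or double shuffle needed at this stage. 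These identities are read off in the complex case from Kummer's solutions (where $P$ and $Q$ become $\V_{\vec{01}}(z)$ and $\V_{\vec{\infty 1}}(1/z)$) and then transferred to arbitrary group-like series by the Chen iterated-integral/shuffle-algebra argument already used in the proof of Proposition \ref{three compatibilities}. Once $Q_\varphi$ is a universal function of $P_\varphi$, the $\varphi$-independence of $P_\varphi$ supplied by Theorem \ref{thm:even unitary associator=M+0} does the rest. To complete your own route you would need an analogue of that transfer --- some identity tying $\varphi(X,Y-X)$ back to $\varphi(X,-Y)$ for all group-like $\varphi$ --- rather than a new Ohno--Zagier-type evaluation.
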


\begin{proof}
For any group-like series $\varphi(e_0,e_1)\in\K\langle\langle e_0,e_1\rangle\rangle$,
write
$P_\varphi:=\ev_{(X,-Y)}\left(\varphi(e_0,e_1)\right)\cdot
\begin{pmatrix}
1 & 1 \\
0 & \frac{\ppp}{\bbb}
\end{pmatrix}$
and
$Q_\varphi=
\ev_{(X,-Y)}\left(\varphi(e_\infty,e_1)\right)\cdot
\begin{pmatrix}
1 & \Add{1} \\
\frac{-\aaa}{\bbb} & -1
\end{pmatrix}
$
in $\GL_2(\K[[\aaa,\bbb,\ccc-1]][\frac{1}{\bbb}])$.

Specifically, when $\varphi=G_{\vec{01}}(e_0,e_1)(z)$,
by \eqref{eqB}, \eqref{eqC} and \eqref{eqD}, we have
{
\begin{align*}
P_\varphi&=
\ev_{(X,-Y)}\left(G_{\vec{01}}(e_0,e_1)(z)\right)\cdot
\begin{pmatrix}
1 & 1 \\
0 & \frac{\ppp}{\bbb}
\end{pmatrix}
=\V_{\vec{01}}(z)\\
&=
\begin{pmatrix} 
\HG{\aaa,\bbb}{\ccc}{z}, & z^{\ppp}\HG{\bbb+1-\ccc,\aaa+1-\ccc}{2-\ccc}{z} \\
\frac{z}{\bbb}\dHG{\aaa,\bbb}{\ccc}{z}, & \frac{\ppp}{\bbb}z^{\ppp}\HG{\bbb+1-\ccc,\aaa+1-\ccc}{2-\ccc}{z}+
z^\bbb\frac{z}{\bbb}\dHG{\bbb+1-\ccc,\aaa+1-\ccc}{2-\ccc}{z}
\end{pmatrix},
\end{align*}
}
{\small
\begin{align*}
&Q_\varphi=
\ev_{(X,-Y)}\left(G_{\vec{01}}(e_\infty,e_1)(z)\right)\cdot
\begin{pmatrix}
1 & \Add{1} \\
\frac{-\aaa}{\bbb} & -1
\end{pmatrix}
=\V_{\vec{\infty 1}}(\frac{1}{z})\\
&=
\begin{pmatrix} 
z^{\aaa}\HG{\aaa,\aaa+1-\ccc}{\aaa-\bbb+1}{z}, & 
z^{\bbb}\HG{\bbb+1-\ccc,\bbb}{\bbb-\aaa+1}{z}\\
-\frac{\aaa}{\bbb}z^{\aaa}\HG{\aaa,\aaa+1-\ccc}{\aaa-\bbb+1}{z}
-z^\aaa\frac{z}{\bbb}\dHG{\aaa,\aaa+1-\ccc}{\aaa-\bbb+1}{z}, &
-z^\bbb\HG{\bbb+1-\ccc,\bbb}{\bbb-\aaa+1}{z}-z^\bbb\frac{z}{\bbb}\dHG{\bbb+1-\ccc,\bbb}{\bbb-\aaa+1}{z}
\end{pmatrix}
\end{align*}
}
\par
\noindent
with $z^\aaa:=\exp(\log z\cdot\aaa)$, 
$z^\bbb:=\exp(\log z\cdot\bbb)$ and $z^\ppp:=\exp(\log z\cdot\ppp)$.
Then, by the same arguments as in the  proof of Proposition \ref{three compatibilities},
we  deduce the following validity for each entry
from the above two expressions in  the complex case
when $\varphi$ is commutator group-like:
\begin{align*}
[Q_\varphi]_{11}&=\iota([P_\varphi]_{11}),\\
[Q_\varphi]_{12}&=\iota([P_\varphi]_{12}), \\
[Q_\varphi]_{21}&=-\frac{\aaa}{\bbb}\iota([P_\varphi]_{11})-\frac{1}{\bbb}\iota(\bbb[P_\varphi]_{21}),\\
[Q_\varphi]_{22}&=-\iota([P_\varphi]_{12})-\frac{1}{\bbb}\iota(\bbb[P_\varphi]_{22}-{\ppp}[P_\varphi]_{12})
\end{align*}
where $\iota$ is the map of \eqref{eq:iota-map}.

Assume that $\varphi$ is an even unitary  associator.
Then, we have
$P_\varphi=\ev_{(X,-Y)}(\varphi)
=M_+\cdot
\begin{pmatrix}
1 & 1 \\
0 & \frac{\ppp}{\bbb}
\end{pmatrix}$
by Theorem \ref{thm: varphi=M}.
Whence $P_\varphi$ is free from any choice of even unitary associators
by Theorem \ref{thm:even unitary associator=M+0}.
By the above four equalities, we see that $Q_\varphi$ is so.
By
$Q_\varphi=
\ev_{(X,-Y)}\left(\varphi(e_\infty,e_1)\right)\cdot
\begin{pmatrix}
1 & \Add{1} \\
\frac{-\aaa}{\bbb} & -1
\end{pmatrix}
$,
we learn that $\ev_{(X,-Y)}\left(\varphi(e_\infty,e_1)\right)$
is  free from any choice of even unitary associators.
\end{proof}

\end{document}